\documentclass[11pt,a4paper]{article}
\usepackage[T1]{fontenc}
\usepackage[utf8]{inputenc}
\usepackage{lmodern}
\usepackage[margin=27mm]{geometry}
\usepackage{microtype}

\usepackage{amsmath,amssymb,amsfonts,amsthm}
\usepackage{mathrsfs}
\usepackage{bm}
\usepackage{enumitem}
\usepackage[hidelinks]{hyperref}
\usepackage[nameinlink]{cleveref}

\newtheorem{theorem}{Theorem}[section]
\newtheorem{lemma}[theorem]{Lemma}
\newtheorem{proposition}[theorem]{Proposition}

\theoremstyle{remark}
\newtheorem{remark}[theorem]{Remark}
\theoremstyle{plain}

\numberwithin{equation}{section}

\newcommand{\diver}{\operatorname{div}}

\hypersetup{
  pdftitle={Uniform Regularity for the Compressible Navier-Stokes-Poisson System under Navier-Slip Boundary Conditions},
  pdfauthor={Hairong Liu, Jinlong Wu},
  pdfsubject={Partial differential equations},
  pdfkeywords={Navier-Stokes-Poisson equations, Navier-slip boundary condition, conormal Sobolev spaces, uniform regularity}
}
\allowdisplaybreaks[2]
\title{\Large Uniform Regularity for the Compressible\\
Navier--Stokes--Poisson System\\
under Navier--Slip Boundary Conditions}
\author{Hairong Liu\quad Jinlong Wu\thanks{Corresponding author.
Email addresses: \href{mailto:hrliu@njust.edu.cn}{\texttt{hrliu@njust.edu.cn}}
(Hairong Liu), \href{mailto:jinlongno@njust.edu.cn}{\texttt{jinlongno@njust.edu.cn}}
(Jinlong Wu).}\\[0.7em]
\small School of Mathematics and Statistics,\\
\small Nanjing University of Science and Technology,\\
\small Nanjing 210094, P.R. China}
\date{}

\begin{document}
\maketitle

\begin{abstract}
In this paper, we establish the existence of a local-in-time strong solution to the compressible Navier--Stokes--Poisson system in a three-dimensional bounded smooth domain, subject to the general Navier--slip boundary condition. The solution is shown to be uniformly bounded in conormal Sobolev spaces, with uniform regularity estimates that are independent of the viscosity coefficients. As a consequence, we obtain the inviscid limit to the corresponding compressible Euler--Poisson system via a strong compactness argument.
\end{abstract}

\noindent\textbf{Keywords:} Compressible Navier--Stokes--Poisson equations;
Navier--slip boundary condition; conormal Sobolev spaces; uniform regularity.

\section{Introduction and main results}\label{sec:intro}

The compressible Navier--Stokes--Poisson (NSP) system is a classical model that couples the Navier--Stokes equations with a self-consistent Poisson equation, and is commonly employed to describe the motion of charged particles such as electrons or holes.
In this paper, we study the initial-boundary value problem for the 
compressible Navier--Stokes--Poisson system in a three-dimensional smooth
bounded domain \(\Omega\subset\mathbb R^3\):
\begin{equation}\label{eq:intro_NSP}
\left\{
\begin{aligned}
&\rho_t^\varepsilon+\diver(\rho^\varepsilon u^\varepsilon)=0,\\
&\rho^\varepsilon u_t^\varepsilon
+\rho^\varepsilon u^\varepsilon\cdot\nabla u^\varepsilon
+\nabla p^\varepsilon
=
\mu\varepsilon\Delta u^\varepsilon
+(\lambda+\mu)\varepsilon\nabla\diver u^\varepsilon
+\rho^\varepsilon\nabla\phi^\varepsilon,\\
&\Delta\phi^\varepsilon=\rho^\varepsilon-1,
\end{aligned}
\right.
\end{equation}
where \(\rho^\varepsilon\), \(u^\varepsilon\), and \(\phi^\varepsilon\)
denote the density, velocity, and electric potential, respectively. The
pressure is given by
\[
p^\varepsilon=(\rho^\varepsilon)^\gamma,\qquad \gamma>1,
\]
and the viscosity coefficients satisfy
\[
\mu>0,\qquad 2\mu+3\lambda>0.
\]
Here \(0<\varepsilon\leq1\) is the viscosity parameter, and the background
charge density in the Poisson equation is normalized to be \(1\).

We impose a general Navier--slip boundary condition on the velocity. In its
stress form, this condition is written as
\[
u^\varepsilon\cdot n=0,\qquad
(Su^\varepsilon\cdot n)_\tau=-(A u^\varepsilon)_\tau
\quad\text{on }\partial\Omega,
\]
where
\[
S u^\varepsilon
=
\frac12\left(
\nabla u^\varepsilon+(\nabla u^\varepsilon)^T
\right),
\]
\(n\) is the unit outward normal vector to \(\partial\Omega\), \(A\) is a given
smooth symmetric matrix on the boundary, and \((\cdot)_\tau\) denotes the
tangential component. For smooth solutions, using the identity
\[
\left(2S(v)n-(\nabla\times v)\times n\right)_\tau
=
-\left(2S(n)v\right)_\tau ,
\]
this boundary condition can be equivalently rewritten in the vorticity form
\begin{equation}\label{eq:intro_Navier_boundary}
u^\varepsilon\cdot n=0,\qquad
n\times\omega^\varepsilon=[B u^\varepsilon]_\tau
\quad\text{on }\partial\Omega,
\end{equation}
where
\[
\omega^\varepsilon=\nabla\times u^\varepsilon,
\qquad
B=2(A-S(n)).
\]
This vorticity formulation is more convenient for the normal derivative and
boundary vorticity estimates. For the electric potential, we impose the
homogeneous Neumann boundary condition
\begin{equation}\label{eq:intro_phi_boundary}
\partial_n\phi^\varepsilon=0
\quad\text{on }\partial\Omega.
\end{equation}

As \(\varepsilon\to0\), the viscous terms in
\eqref{eq:intro_NSP} formally vanish, and the corresponding limiting system is
the compressible Euler--Poisson system
\begin{equation}\label{eq:intro_EP}
\left\{
\begin{aligned}
&\rho_t+\diver(\rho u)=0,\\
&\rho u_t+\rho u\cdot\nabla u+\nabla p
=
\rho\nabla\phi,\\
&\Delta\phi=\rho-1,
\end{aligned}
\right.
\qquad x\in\Omega,\quad t>0,
\end{equation}
where \(p=\rho^\gamma\). The limiting boundary conditions are
\begin{equation}\label{eq:intro_EP_boundary}
u\cdot n=0,\qquad \partial_n\phi=0
\quad\text{on }\partial\Omega .
\end{equation}
Thus, our aim is to establish uniform estimates for
\eqref{eq:intro_NSP}--\eqref{eq:intro_phi_boundary} in a conormal Sobolev
framework and to justify the inviscid limit from
\eqref{eq:intro_NSP} to \eqref{eq:intro_EP}.

The vanishing viscosity limit for the Navier--Stokes equations is a classical
problem in mathematical fluid mechanics. In the absence of boundaries, the
limit can be justified in the standard Sobolev framework as long as the
corresponding Euler solution remains smooth \cite{Kato1972}. In domains with
physical boundaries, however, the problem becomes much more delicate because
viscous and inviscid flows usually satisfy different boundary conditions. For
the no-slip boundary condition, the inviscid limit is closely related to the
Prandtl boundary layer theory and remains largely open, except in some special
settings such as analytic data, vorticity initially separated from the
boundary, or certain steady regimes
\cite{SammartinoCaflisch1998I,SammartinoCaflisch1998II,Maekawa2014,
NguyenNguyen2018}. In contrast, Navier--slip type boundary conditions are more
compatible with the inviscid boundary condition and therefore make it possible
to establish uniform regularity estimates. For incompressible flows, uniform
regularity and the inviscid limit under slip or Navier-type boundary
conditions were studied in
\cite{XiaoXin2007,BeiraoCrispoCrispo2011,IftimieSueur2011,XiaoXin2013}.
A decisive progress was made by Masmoudi and Rousset
\cite{MasmoudiRousset2012}, who developed conormal Sobolev estimates in
general domains and justified the vanishing viscosity limit by compactness.
This conormal approach was later adapted to the compressible Navier--Stokes
equations with general Navier--slip boundary conditions by Wang, Xin and Yong
\cite{WangXinYong2015}, and to the full compressible Navier--Stokes system by
Wang \cite{Wang2016}. These works provide the main analytical background for
the present paper, where conormal derivatives are used to capture tangential
regularity while the missing normal regularity is recovered from the equations
and the boundary structure.

The main object of this paper is the compressible Navier--Stokes--Poisson
system. This model describes the motion of a compressible charged fluid coupled
with a self-consistent electrostatic field and appears in plasma physics,
semiconductor theory, ionized gas dynamics and related hydrodynamic models of
charged particles. A systematic introduction to the compressible
Navier--Stokes--Poisson equations can be found in Hsiao and Li
\cite{HsiaoLi2010}. Compared with the usual compressible Navier--Stokes
equations, the Poisson equation introduces a nonlocal elliptic coupling into
the momentum equation, and the electric force has to be estimated together
with the fluid variables. This changes both the energy structure and the
regularity analysis, especially in the presence of physical boundaries.

There is a large literature on the well-posedness and large-time behavior of
the Navier--Stokes--Poisson system. Weak solutions were studied, for example,
in \cite{Donatelli2003,KobayashiSuzuki2008,ZhangTan2007}, where the main
difficulty is to combine the compactness of the fluid variables with the
elliptic structure of the self-consistent potential. For strong solutions in
the Cauchy problem, global existence and decay around equilibrium states have
been obtained in various perturbative frameworks
\cite{HaoLi2009,LiMatsumuraZhang2010,TanWu2012,Wang2012,
TanYangZhaoZou2013,BieWangYao2017}. These works show that the electric field
may produce additional dissipation or dispersive effects, but at the same time
creates new coupling terms in the high-order energy estimates. For problems
with boundaries or nontrivial background profiles, stability of steady states,
global well-posedness and decay have also been investigated in bounded or
exterior domains
\cite{TanWangWang2015,LiuLuoZhong2020,LiuZhong2021,ChenHuangShi2024}.
In particular, the treatment of boundary conditions for the electric potential
is essential, since different choices of boundary conditions may lead to
different boundary mechanisms. There are also related models with additional
physical effects, such as damping term, heat conductivity, capillarity or
quantum/Korteweg-type corrections
\cite{WangChen1998,Wang2001,Jungel2010,AntonelliSpirito2022}. These variants
further illustrate that the Poisson coupling interacts strongly with the
dissipative and dispersive structures of the fluid equations.

Singular limits for the Navier--Stokes--Poisson system form another important
direction. The quasi-neutral limit, the vanishing Debye length limit, the
relaxation limit and the vanishing viscosity limit have been studied by
compactness methods, relative entropy methods, dispersive estimates, boundary
layer analysis and energy estimates. In the whole space or periodic domains,
the convergence from Navier--Stokes--Poisson type systems to incompressible
Euler, Euler--Poisson or related limiting systems has been considered in
\cite{WangJiang2006,DonatelliMarcati2008,JuLiLi2009,LiJuXu2015}.
For half-space or boundary problems, the interaction between the Poisson
equation and the boundary conditions becomes more subtle. Ju and Xu
\cite{JuXu2018} studied quasi-neutral and zero-viscosity limits for the
Navier--Stokes--Poisson equations in the half-space. Ju, Luo and Xu
\cite{JuLuoXu2023} justified singular limits for a viscous plasma under
Navier--slip boundary condition for the velocity and Dirichlet boundary
condition for the electric potential; in that problem, the main difficulty is
the strong density boundary layer generated by the prescribed boundary
potential and its interaction with the weak velocity boundary layer. In
contrast, when the electric potential satisfies a homogeneous Neumann boundary
condition, the electrostatic energy is naturally compatible with integration
by parts, and the strong boundary layer mechanism caused by the prescribed
potential is absent. Nevertheless, uniform high-order estimates remain
nontrivial because one still has to control the nonlocal electric field
together with the compressible fluid variables.

Recently, uniform regularity and vanishing viscosity limits for
Navier--Stokes--Poisson type systems have attracted further attention. For the
compressible Navier--Stokes--Poisson system with damping term under the slip
boundary condition, Gao, Peng, Wei and Yao \cite{GaoPengWeiYao2026}
established global-in-time uniform bounds and obtained the vanishing viscosity
limit with convergence rates. The damping term plays a stabilizing role in
their analysis and allows global-in-time estimates. The present paper is
concerned with a different setting: we consider the compressible
Navier--Stokes--Poisson system without damping term in a general
three-dimensional bounded domain, equipped with the Navier--slip boundary
condition for the velocity and the homogeneous Neumann boundary condition for
the electrostatic potential. Therefore, the problem is local in time, but it
retains the essential difficulties caused by the curved boundary, the
compressibility and the nonlocal Poisson coupling.

The goal of this paper is to establish uniform conormal regularity estimates
for the Navier--Stokes--Poisson system, independently of the viscosity, and
then to justify the vanishing viscosity limit to the corresponding
Euler--Poisson system. Compared with the compressible Navier--Stokes theory
\cite{WangXinYong2015,Wang2016}, the new difficulty is the electric force
\(\rho\nabla\phi\), which requires elliptic estimates for the Poisson equation
at the same conormal level as the fluid estimates. In particular, one has to
control \(\nabla\phi\) in both conormal Sobolev norms and \(L^\infty\)-type
norms, and to show that the Poisson coupling can be incorporated into the
energy method without losing derivatives. The homogeneous Neumann boundary
condition for \(\phi\) is crucial in this respect. Our analysis shows that the
conormal framework remains effective for the Navier--Stokes--Poisson coupling
in general bounded domains and yields, by compactness, the inviscid
Euler--Poisson limit.

We now state the main results. The system and boundary
conditions have been given above. Throughout this paper, for each
\(0<\varepsilon\leq1\), the initial potential \(\phi_0\) is
determined by
\[
\Delta\phi_0=\rho_0-1,
\qquad
\partial_n\phi_0=0 \quad\text{on } \partial\Omega .
\]
We also assume the compatibility condition
\[
\int_\Omega(\rho_0-1)\,dx=0.
\]

We define the functional space adapted to the viscous
Navier--Stokes--Poisson system. For \(0<\varepsilon\leq1\), set
\begin{equation}\label{eq:Xepsilon_m_norm}
\begin{aligned}
\|(p,u,\nabla\phi)(t)\|_{X_m^\varepsilon}^2
:={}&
\|(p,u,\nabla\phi)(t)\|_{\mathcal H^m}^2
+
\|\nabla u(t)\|_{\mathcal H^{m-1}}^2 \\
&
+
\sum_{k=0}^{m-2}
\|\partial_t^k\nabla p(t)\|_{m-1-k}^2  
+
\|\Delta p(t)\|_{\mathcal H^1}^2 \\
&
+
\|(\nabla u,\nabla\phi)(t)\|_{\mathcal H^{1,\infty}}^2  
+
\varepsilon\|\nabla\partial_t^{m-1}p(t)\|_{L^2}^2
+
\varepsilon\|\Delta p(t)\|_{\mathcal H^2}^2 .
\end{aligned}
\end{equation}
For the limiting Euler--Poisson system \eqref{eq:intro_EP}, we use the notation
\begin{equation}\label{eq:Xzero_m_norm}
\begin{aligned}
\|(p,u,\nabla\phi)(t)\|_{X_m^0}^2
:={}&
\|(p,u,\nabla\phi)(t)\|_{\mathcal H^m}^2
+
\|\nabla u(t)\|_{\mathcal H^{m-1}}^2\\
&
+
\sum_{k=0}^{m-2}
\|\partial_t^k\nabla p(t)\|_{m-1-k}^2  
+
\|\Delta p(t)\|_{\mathcal H^1}^2\\
&
+
\|(\nabla u,\nabla\phi)(t)\|_{\mathcal H^{1,\infty}}^2 .
\end{aligned}
\end{equation}

Here \(\|\cdot\|_{\mathcal H^m}\) denotes the space-time conormal Sobolev norm,
while \(\|\cdot\|_k\) denotes the pure spatial conormal Sobolev norm.
Moreover, \(\|\cdot\|_{\mathcal H^{1,\infty}}\) denotes the corresponding
conormal \(L^\infty\)-type norm. Their precise definitions are given in
Section~\ref{sec:preliminaries}.

Compared with the space used for the compressible Navier--Stokes equations, the
space in \eqref{eq:Xepsilon_m_norm} contains the additional electric-field term
\(\|\nabla\phi(t)\|_{\mathcal H^m}^2\), which is required to control the force
\(\rho\nabla\phi\) coupled through the Poisson equation.

To complete the setup of our functional framework, we define the uniform solution space $X_m^\varepsilon(T)$ as
\begin{equation}\label{eq:Xepsilon_m_T_space}
X_m^\varepsilon(T)
=
\left\{
(p,u,\nabla\phi)\in L^\infty([0,T];L^2(\Omega))
\ \Big|\
\operatorname*{ess\,sup}_{0\leq t\leq T}
\|(p,u,\nabla\phi)(t)\|_{X_m^\varepsilon}<+\infty
\right\}.
\end{equation}

We assume that the initial data satisfy
\begin{equation}\label{eq:initial_density_bound_main}
0<\frac1{\widehat C_0}
\leq
\rho_0(x)
\leq
\widehat C_0
<\infty,
\end{equation}
and
\begin{equation}\label{eq:initial_uniform_X_bound_main}
\sup_{0<\varepsilon\leq1}
\|(p_0,u_0,\nabla\phi_0)\|_{X_m^\varepsilon}
\leq
\widetilde C_0,
\end{equation}
where
\[
p_0=(\rho_0)^\gamma.
\]

\begin{theorem}[Uniform regularity]\label{thm:main_uniform_regularity}
Let \(m\geq6\), let \(\Omega\) be a \(C^{m+2}\)-domain, and assume that the
matrix \(A\) in the Navier--slip boundary condition satisfies
\[
A\in C^{m+1}(\partial\Omega).
\]
Let
\[
(p_0,u_0,\nabla\phi_0)
\]
be initial data satisfying
\eqref{eq:initial_density_bound_main}--\eqref{eq:initial_uniform_X_bound_main}
and the compatibility conditions. Then there exist a time
\(T_0>0\) and a constant \(\widetilde C_1>0\), both independent of
\(\varepsilon\in(0,1]\), such that the Navier--Stokes--Poisson problem admits a
unique solution
\[
(\rho^\varepsilon,u^\varepsilon,\nabla\phi^\varepsilon)
\]
on \([0,T_0]\). Moreover, the solution satisfies
\begin{equation}\label{eq:main_uniform_density_bound}
\frac1{2\widehat C_0}
\leq
\rho^\varepsilon(t,x)
\leq
2\widehat C_0,
\qquad
(t,x)\in[0,T_0]\times\Omega,
\end{equation}
and the following uniform estimate:
\begin{equation}\label{eq:main_uniform_estimate_direct}
\begin{aligned}
&\sup_{0\leq t\leq T_0}
\Bigg\{
\|(p^\varepsilon,u^\varepsilon,\nabla\phi^\varepsilon)(t)\|_{\mathcal H^m}^2
+
\|\nabla u^\varepsilon(t)\|_{\mathcal H^{m-1}}^2
+
\sum_{k=0}^{m-2}
\|\partial_t^k\nabla p^\varepsilon(t)\|_{m-1-k}^2 \\
&\qquad
+
\|\Delta p^\varepsilon(t)\|_{\mathcal H^1}^2
+
\|(\nabla u^\varepsilon,\nabla\phi^\varepsilon)(t)\|_{\mathcal H^{1,\infty}}^2
+
\varepsilon
\|\nabla\partial_t^{m-1}p^\varepsilon(t)\|_{L^2}^2 \\
&\qquad
+
\varepsilon
\|\Delta p^\varepsilon(t)\|_{\mathcal H^2}^2
\Bigg\}
+
\int_0^{T_0}
\|\nabla\partial_t^{m-1}p^\varepsilon(t)\|_{L^2}^2\,dt
+
\int_0^{T_0}
\|\Delta p^\varepsilon(t)\|_{\mathcal H^2}^2\,dt \\
&\quad
+
\varepsilon
\int_0^{T_0}
\|\nabla u^\varepsilon(t)\|_{\mathcal H^m}^2\,dt
+
\varepsilon
\sum_{k=0}^{m-2}
\int_0^{T_0}
\|\nabla^2\partial_t^k u^\varepsilon(t)\|_{m-1-k}^2\,dt \\
&\quad
+
\varepsilon^2
\int_0^{T_0}
\|\nabla^2\partial_t^{m-1}u^\varepsilon(t)\|_{L^2}^2\,dt
\leq
\widetilde C_1 .
\end{aligned}
\end{equation}

Here \(\widetilde C_1\) depends only on
\(\widehat C_0\), \(\widetilde C_0\), \(m\), \(\Omega\), and the boundary
matrix \(A\).
\end{theorem}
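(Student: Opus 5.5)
The plan follows the strategy of Wang--Xin--Yong for compressible Navier--Stokes: a priori estimates closed through a continuity argument. We combine local existence for each fixed \(\varepsilon\), which is classical for strong solutions with positive viscosity, with an a priori estimate of the form
\[
\mathcal N_m(t)\le \widetilde C_2\Big(\mathcal N_m(0)+\int_0^t\big(\mathcal N_m(s)+\mathcal N_m(s)^{\ell}\big)\,ds\Big),
\]
for some integer \(\ell\). Here \(\mathcal N_m(t)\) denotes the left-hand side of \eqref{eq:main_uniform_estimate_direct} on \([0,t]\), and \(\widetilde C_2\) is independent of \(\varepsilon\). The estimate is proved under the bootstrap hypothesis \(\frac1{2\widehat C_0}\le\rho\le2\widehat C_0\). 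A nonlinear Gronwall argument then gives a uniform time \(T_0\) and the bound \(\widetilde C_1\). The density bound is recovered on \([0,T_0]\) by writing \(\rho(t)=\rho_0+\int_0^t\partial_t\rho\) and using \(\|\partial_t\rho\|_{L^\infty}\lesssim \mathcal N_m^{1/2}\), after shrinking \(T_0\). Uniqueness is the standard \(L^2\) stability estimate for the difference of two solutions.

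\textbf{Step 1: the Poisson part.} For the Neumann problem \(\Delta\phi=\rho-1\), \(\partial_n\phi=0\), we need
\[
\|\nabla\phi\|_{\mathcal H^m}+\|\nabla^2\phi\|_{\mathcal H^{m-1}}\lesssim\|\rho-1\|_{\mathcal H^m}
\]
and
\[
\|\nabla\phi\|_{\mathcal H^{1,\infty}}\lesssim\|\rho-1\|_{\mathcal H^{m}}.
\]
The constraint \(\int_\Omega(\rho-1)\,dx=0\) is preserved because mass is conserved and \(u\cdot n=0\). Conormal derivatives \(Z^\alpha\) commute with \(\Delta\) only up to lower-order terms. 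So we apply \(Z^\alpha\partial_t^k\) and use \(H^2\) elliptic regularity in local boundary coordinates, estimating the normal derivatives from the equation itself. The \(L^\infty\) bound follows from Sobolev embedding, since \(\phi\) gains two full derivatives and \(m\ge6\). Since \(\rho=p^{1/\gamma}\), the force \(\rho\nabla\phi\) is then bounded in \(\mathcal H^m\) by \(\Lambda(\|(p,\nabla\phi)\|_{\mathcal H^m})\), with no loss of derivatives.

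\textbf{Step 2: fluid estimates.} We reproduce the compressible NS scheme, treating \(\rho\nabla\phi\) as a source. The scheme consists of:
\begin{itemize}
\item conormal energy estimates for \(Z^\alpha\partial_t^k(p,u)\) with \(|\alpha|+k\le m\), using the symmetrized acoustic system \(\frac{1}{\gamma p}(p_t+u\cdot\nabla p)+\diver u=0\);
\item the Navier boundary condition in vorticity form, which controls the boundary terms of the viscous integrals, so that these contribute the dissipation \(\varepsilon\int\|\nabla u\|_{\mathcal H^m}^2\).
\end{itemize}
The electric term enters as \(\int Z^\alpha(\rho\nabla\phi)\cdot Z^\alpha u\), which Step 1 bounds. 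Alternatively it can be handled through the energy identity \(\int\rho u\cdot\nabla\phi=-\frac12\frac{d}{dt}\int|\nabla\phi|^2\), which uses \(\partial_n\phi=0\).

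Normal regularity comes from two sources. Away from the viscous part, \(\partial_n u\cdot n\) is recovered from \(\diver u\), which the pressure equation expresses via \(p\). The tangential normal derivatives are recovered from \(\eta=\omega\times n-(Bu)_\tau\)-type quantities, which vanish on \(\partial\Omega\) and satisfy a convection--diffusion equation. The extra source \(\nabla\times(\rho\nabla\phi)=\nabla\rho\times\nabla\phi\) in that equation is harmless by Step 1 and the \(\nabla p\) bounds.

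The pressure estimates \(\|\partial_t^k\nabla p\|_{m-1-k}\), \(\|\Delta p\|_{\mathcal H^1}\), and the \(\varepsilon\)-weighted and time-integrated bounds come from the damped-wave structure. We rewrite the momentum equation as
\[
\nabla p=(2\mu+\lambda)\varepsilon\nabla\diver u-\mu\varepsilon\nabla\times\omega-\rho\dot u+\rho\nabla\phi,
\]
and substitute \(\diver u\) from the pressure equation. This gives a parabolic-type equation for \(p\) with a small diffusion coefficient \(\varepsilon\), for which the Neumann-type data come from \(u\cdot n=0\). The \(\phi\) contribution appears only through \(\diver(\rho\nabla\phi)=\nabla\rho\cdot\nabla\phi+\rho(\rho-1)\), which involves no loss of derivatives.

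\textbf{Main obstacle.} The hardest step will be the \(L^\infty\) estimate \(\|\nabla u\|_{\mathcal H^{1,\infty}}\). As in Masmoudi--Rousset, it requires a maximum-principle or Green-function estimate for the convection--diffusion equation satisfied by \(\eta\) in normal geodesic coordinates near the boundary, together with the boundary layer of width \(\sqrt\varepsilon\). We would follow the Wang--Xin--Yong argument verbatim and check that the additional source \(\nabla\rho\times\nabla\phi\) lies in \(L^\infty\cap\mathcal H^{1,\infty}\). By Step 1 this needs \(\nabla\phi\in W^{2,\infty}\) locally, which \(m\ge6\) provides, and the \(\mathcal H^{1,\infty}\) control of \(\nabla p\), which the term \(\|\Delta p\|_{\mathcal H^1}\) in \(X^\varepsilon_m\) is designed to supply.
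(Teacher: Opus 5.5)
Your outline is correct and has the same architecture as the paper: a density bootstrap, conormal energy estimates for $(p,u)$, and damped acoustic estimates for $(\diver u,\nabla p)$ with $\varepsilon$-weighted and time-integrated control of $\nabla\partial_t^{m-1}p$. It also shares normal regularity from $\diver u$ and from $\eta$, the one-dimensional parabolic $L^\infty$ bound for $\eta$, the $\Delta p$ estimate, and a continuity argument. (Take $\eta=\chi(\omega\times n+\Pi(Bu))$; with $n\times\omega=[Bu]_\tau$, your sign does not vanish on $\partial\Omega$.)

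The genuine difference is how the electric force is handled in the conormal energy estimate. The paper integrates $\int_\Omega\rho\nabla Z^\alpha\phi\cdot Z^\alpha u\,dx$ by parts using the continuity and Poisson equations, see \eqref{eq:J_alpha_decomposition}. This produces $-\frac12\frac{d}{dt}\|\nabla Z^\alpha\phi\|_{L^2}^2$ and puts $\|\nabla\phi\|_{\mathcal H^m}$ into $\mathcal E_m$, at the price of the boundary integrals $B_1^\alpha,B_2^\alpha$ and the commutators $R_{\diver}^\alpha,R_\rho^\alpha,R_\Delta^\alpha$. You instead treat $\nabla\phi$ as slaved to $\rho$. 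The Neumann estimate gives $\|\nabla\phi\|_{\mathcal H^m}\lesssim\|\rho-1\|_{\mathcal H^m}$; for pure time derivatives use $\|\nabla\partial_t^k\phi\|_{L^2}\lesssim\|\partial_t^k\rho\|_{L^2}$, valid since $\int_\Omega\partial_t^k\rho\,dx=0$. Then $\int_\Omega Z^\alpha(\rho\nabla\phi)\cdot Z^\alpha u\,dx$ is handled by Cauchy--Schwarz with no boundary terms at all. This is legitimate and shorter. The paper needs Proposition~\ref{prop:poisson_neumann_estimate} for its commutators anyway, so its energy-identity route mainly buys consistency with Lemma~\ref{lem:basic_energy}; it is more robust only where the elliptic gain is not uniform (e.g.\ a small Debye length). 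Justify the $L^\infty$ bound on $\nabla\phi$ by the anisotropic embedding applied to $Z^\alpha\nabla\phi$ plus the conormal bound on $\nabla^2\phi$, not by ``two full derivatives''. The latter is unavailable, since $\rho$ is only conormally regular.

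Two steps need tightening.

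\textbf{The wave estimate.} The pure source bound fails where the force is tested against $\nabla Z^\alpha\diver u$. This happens in the damped-wave estimate that gives $\sup_t\|Z^\alpha\nabla p\|_{L^2}^2$ and the dissipation $\varepsilon\int_0^t\|\nabla Z^\alpha\diver u\|_{L^2}^2$, which you need for $\varepsilon\int_0^t\|\nabla^2\partial_t^ku\|_{m-1-k}^2$. For $|\alpha|=m-1$, $\nabla Z^\alpha\diver u$ is controlled only with weight $\varepsilon$, so the gradient must be moved off it. Contrary to your claim that $\phi$ enters only through $\diver(\rho\nabla\phi)$, a boundary term then survives. Differentiating $\rho\partial_n\phi=0$ tangentially gives $n\cdot Z_y(\rho\nabla\phi)=-\rho\nabla\phi\cdot Z_yn\neq0$ on a curved boundary. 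This is paired with the trace of $Z^\alpha\diver u$, which is not uniformly bounded. The paper removes it (Lemma~\ref{lem:divu_high_conormal}, terms $I_1^\alpha$--$I_3^\alpha$) by tangential integration by parts, possible because $\alpha_0\le m-2$. It combines this with the $\varepsilon$-free bound on $\int_0^t\|\nabla Z^{m-2}\diver u\|_{L^2}^2$ from Lemma~\ref{lem:normal_divu_estimate}.

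\textbf{The pressure equation.} Substituting $\diver u$ into the momentum equation does not give a parabolic equation for $p$. It gives $(2\mu+\lambda)\varepsilon(\partial_t+u\cdot\nabla)\nabla\ln\rho+\nabla p=\cdots$, a relaxation equation with $\varepsilon$ on the time derivative and unit damping. This structure, and its divergence form for $\Delta\ln\rho$, yields the unweighted $\int_0^t\|\nabla\partial_t^{m-1}p\|_{L^2}^2$ and $\int_0^t\|\Delta p\|_{\mathcal H^2}^2$. It uses no boundary condition for $p$, only $u\cdot n=0$ in the transport term. If you intended an elliptic Neumann problem for $p$, its data $n\cdot\nabla p$ contain $(2\mu+\lambda)\varepsilon\,\partial_n\diver u$ on $\partial\Omega$, which is not uniformly controlled.
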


For \(0<\varepsilon\leq1\), denote by
\[
(\rho^\varepsilon,u^\varepsilon,\nabla\phi^\varepsilon)
\]
the solution on \([0,T_0]\) obtained in
Theorem~\ref{thm:main_uniform_regularity}. With the uniform estimate \eqref{eq:main_uniform_estimate_direct} in hand, one may pass to the limit as \(\varepsilon\to0\) and obtain the following inviscid convergence result.
\begin{theorem}\label{thm:main_inviscid_limit}
Let \(m\geq 6\), let \(\Omega\) be a \(C^{m+2}\)-domain, and assume that
\(A\in C^{m+1}(\partial\Omega)\). Suppose that the initial data
\((p_0,u_0,\nabla\phi_0)\) satisfy the assumptions of
Theorem~\ref{thm:main_uniform_regularity}. For \(0<\varepsilon\leq1\), let
\((\rho^\varepsilon,u^\varepsilon,\nabla\phi^\varepsilon)\) be the solution to
the initial-boundary value problem
\eqref{eq:intro_NSP}--\eqref{eq:intro_phi_boundary} on \([0,T_0]\) with
initial data \((p_0,u_0,\nabla\phi_0)\), satisfying the uniform estimate in
Theorem~\ref{thm:main_uniform_regularity}. Then the Euler--Poisson system
\eqref{eq:intro_EP} with the boundary conditions
\[
u\cdot n=0,\qquad \partial_n\phi=0
\quad\text{on } \partial\Omega
\]
and the initial condition
\[
(\rho,u,\nabla\phi)(x,0)
=
(\rho_0,u_0,\nabla\phi_0)(x),
\qquad x\in\Omega,
\]
admits a unique solution \((\rho,u,\nabla\phi)\) on \([0,T_0]\) satisfying
\[
\sup_{0\leq t\leq T_0}
\|(p,u,\nabla\phi)(t)\|_{X_m^0}<\infty ,
\]
where \(p=\rho^\gamma\). Moreover, it holds that
\[
\begin{aligned}
&\lim_{\varepsilon\to0}
\sup_{0\leq t\leq T_0}
\Bigg(
\|(p^\varepsilon,u^\varepsilon,\nabla\phi^\varepsilon)
-
(p,u,\nabla\phi)\|_{L^2(\Omega)} \\
&\qquad\qquad
+
\|(p^\varepsilon,u^\varepsilon,\nabla\phi^\varepsilon)
-
(p,u,\nabla\phi)\|_{L^\infty(\Omega)}
\Bigg)
=0 .
\end{aligned}
\]

\end{theorem}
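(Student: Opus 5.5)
The proof will be a compactness argument resting on the uniform bound \eqref{eq:main_uniform_estimate_direct} of Theorem~\ref{thm:main_uniform_regularity}, followed by a uniqueness argument that upgrades subsequence convergence to convergence of the whole family. The first step is to extract compactness. From \eqref{eq:main_uniform_estimate_direct} the families \(p^\varepsilon,u^\varepsilon,\nabla\phi^\varepsilon\) are bounded in \(L^\infty(0,T_0;\mathcal H^m)\), and their gradients are bounded in \(L^\infty(0,T_0;\mathcal H^{m-1})\) (for \(p\) we use \(\sum_k\|\partial_t^k\nabla p\|_{m-1-k}\); for \(\nabla\phi\) we use elliptic regularity for the Neumann problem with \(\Delta\phi=\rho-1\)). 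Since \(\mathcal H^m\) contains time derivatives, \(\partial_t(p^\varepsilon,u^\varepsilon,\nabla\phi^\varepsilon)\) is also bounded in \(L^\infty(0,T_0;L^2)\), and in fact in \(L^\infty(0,T_0;H^1)\) up to the conormal loss. Each family is therefore bounded in \(L^\infty(0,T_0;H^1(\Omega))\) with time derivative bounded in \(L^\infty(0,T_0;L^2)\). The Aubin--Lions--Simon lemma, with \(H^1\subset\subset L^2\), then gives a subsequence converging in \(C([0,T_0];L^2(\Omega))\).

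The second step upgrades this to \(L^\infty\) convergence. We combine the \(L^2\) convergence with a uniform Lipschitz bound in \(x\), namely the bound on \(\|(\nabla u^\varepsilon,\nabla\phi^\varepsilon)\|_{\mathcal H^{1,\infty}}\), together with a bound on \(\|\nabla p^\varepsilon\|_{L^\infty}\). The latter follows from the anisotropic Sobolev embedding \(\|f\|_{L^\infty}^2\lesssim\|\nabla f\|_{1}\|f\|_{2}\) applied to \(p\), using \(\|\nabla p\|_{m-1}\) with \(m\ge6\). The interpolation inequality \(\|f\|_{L^\infty}\lesssim\|f\|_{L^2}^{2/5}\|f\|_{W^{1,\infty}}^{3/5}\) in three dimensions then yields uniform-in-time \(L^\infty\) convergence along the subsequence.

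The third step passes to the limit and identifies the limit. The nonlinear terms \(\rho u\cdot\nabla u\), \(\nabla p\) and \(\rho\nabla\phi\) converge in the sense of distributions, by strong convergence combined with weak-\(*\) convergence of the derivatives. The viscous terms satisfy \(\varepsilon\|\nabla u^\varepsilon\|_{L^2}\to0\), so they vanish in \(\mathcal D'\) after one integration by parts. The Poisson equation and the Neumann condition are linear, so they pass to the limit directly. The condition \(u\cdot n=0\) is preserved by trace continuity in \(H^1\). By lower semicontinuity of the weak-\(*\) limits, the limit satisfies \(\sup_t\|(p,u,\nabla\phi)\|_{X_m^0}<\infty\). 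The density bounds \eqref{eq:main_uniform_density_bound} pass to the limit, which guarantees \(p=\rho^\gamma\) with \(\rho\) bounded away from zero.

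The final step, and the main obstacle, is uniqueness of the Euler--Poisson solution in this class. Uniqueness is what makes the whole family (not just a subsequence) converge. We will prove it by an \(L^2\) energy (or relative entropy) estimate on the difference of two solutions, written in symmetrized variables \((\rho,u)\), or in \((p,u)\) weighted by \(1/(\gamma p)\). Boundary terms vanish because \(u_i\cdot n=0\). The Lipschitz bounds \(\nabla u,\nabla p\in L^\infty\) provided by \(X_m^0\) control the commutator terms. The delicate part is the electric coupling \(\rho\nabla\phi\). Its difference splits into \(\rho_1\nabla(\phi_1-\phi_2)+(\rho_1-\rho_2)\nabla\phi_2\). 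The first piece is handled by the Neumann elliptic estimate \(\|\nabla(\phi_1-\phi_2)\|_{L^2}\lesssim\|\rho_1-\rho_2\|_{L^2}\), valid since the mean of \(\rho_i-1\) is zero by conservation of mass and the compatibility condition. The second piece is handled using \(\nabla\phi_2\in L^\infty\). Grönwall's inequality then gives zero difference. Finally, since every subsequence has a further subsequence converging to the same limit, the full family converges as \(\varepsilon\to0\).
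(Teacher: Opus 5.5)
Your proposal is correct and follows the same route as the paper's (only sketched) argument: compactness from the uniform bound \eqref{eq:main_uniform_estimate_direct}, passage to the limit with the Poisson equation handled by the Neumann elliptic estimates, and uniqueness in the Lipschitz class $X_m^0$, which upgrades subsequential convergence to convergence of the whole family. One small fix is needed in your second step: conormal bounds such as $\|\nabla p^\varepsilon\|_{m-1}$ or $\|\nabla\phi^\varepsilon\|_{\mathcal H^{1,\infty}}$ do not control normal derivatives near $\partial\Omega$, so you should obtain the Lipschitz bounds on $p^\varepsilon$ and $\nabla\phi^\varepsilon$ by recovering the normal--normal second derivatives from $\|\Delta p^\varepsilon\|_{\mathcal H^1}$ (which is part of \eqref{eq:main_uniform_estimate_direct}) and from $\Delta\phi^\varepsilon=\rho^\varepsilon-1$, as is done for $p$ in Lemma~\ref{lem:Linfty_basic}.
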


\begin{remark}\label{rmk:main_space_difference}
The term
\[
\|\nabla\phi^\varepsilon(t)\|_{\mathcal H^m}^2
\]
is the new component in the conormal energy space compared with the
compressible Navier--Stokes case. It is generated by the electric force
\(\rho^\varepsilon\nabla\phi^\varepsilon\). The Poisson equation and the
Neumann boundary condition allow this term to be included in the energy
functional without introducing additional dissipation.
\end{remark}
\begin{remark}\label{rmk:insulated_bc}
While this paper employs the homogeneous Neumann condition
\eqref{eq:intro_phi_boundary}, the conormal framework can also be adapted to
the insulated boundary setting through the Dirichlet condition
\[
\phi|_{\partial\Omega}=0 .
\]
Under this alternative boundary condition, the normal derivative
\(\partial_n\phi\) is generally nonzero on \(\partial\Omega\). Consequently,
in the basic \(L^2\)-estimate for \(\diver u\), namely
Lemma~\ref{lem:div_gradrho_gradphi}, integration by parts produces the
non-vanishing boundary term
\[
\int_{\partial\Omega}
\rho\,\partial_n\phi\,\diver u\,dS,
\]
which cannot be controlled directly by the low-order energy.

To overcome this difficulty, one may temporarily postpone
Lemma~\ref{lem:div_gradrho_gradphi} and first proceed with the high-order
conormal estimates, Lemma~\ref{lem:high_order_conormal}, together with the
normal derivative estimates. These estimates do not introduce additional
uncontrolled boundary terms, since the conormal derivatives \(Z^\alpha\) are
tangent to the boundary. Once the \(\varepsilon\)-independent estimates for
the pure time derivatives are obtained from the pressure equation,
Lemma~\ref{lem:pure_time_divu_without_epsilon}, sufficient regularity is
available to control the trace of \(\diver u\) on \(\partial\Omega\). With
this high-order information, the above boundary integral can then be estimated
retroactively, and Lemma~\ref{lem:div_gradrho_gradphi} can be closed.
\end{remark}

We now outline the proof of Theorem~\ref{thm:main_uniform_regularity}. The proof
is based on a sequence of uniform a priori estimates in conormal Sobolev spaces.
The basic and high-order conormal energy estimates are first obtained for
\((p^\varepsilon,u^\varepsilon,\nabla\phi^\varepsilon)\). Compared with the
compressible Navier--Stokes system, the main additional difficulty is the electric
force, which is coupled with the density through the Poisson equation. This term
is incorporated into the electrostatic part of the energy by using the continuity
equation and the Poisson equation, while the corresponding commutator terms are
controlled by conormal product estimates and elliptic estimates for the Neumann
problem. Since \(\diver u^\varepsilon\) is not zero, estimates for
\(\diver u^\varepsilon\) and \(\nabla p^\varepsilon\), including the pure time
derivative estimates, are needed to close the high-order energy. The normal
derivative of the velocity is treated through the vorticity form of the
Navier--slip boundary condition: one introduces a boundary vorticity quantity
satisfying a homogeneous Dirichlet boundary condition, and the electric source
terms in its equation are estimated by the previously obtained conormal bounds
and the Poisson elliptic estimates. The \(L^\infty\)-estimates are then derived
from the anisotropic Sobolev embedding, the boundary vorticity estimate, and the
elliptic estimates for the electric potential. Finally, the uniform estimate for
\(\Delta p^\varepsilon\) provides the required control of the Lipschitz norm of
the pressure. Combining these estimates yields the uniform bound in
Theorem~\ref{thm:main_uniform_regularity} by a standard approximation and
continuity argument.

The paper is organized as follows. In Section~\ref{sec:preliminaries}, we
introduce the conormal Sobolev spaces, the local coordinate system near the
boundary, and several basic inequalities which will be used throughout the
paper. Section~\ref{sec:apriori} is devoted to the uniform a priori estimates.
We first derive the basic energy estimates and the high-order conormal
estimates, then establish the normal derivative estimates, the
\(L^\infty\)-estimates, and the uniform estimate for \(\Delta p^\varepsilon\).
Finally, in Section~\ref{sec:proofs}, we complete the proofs of
Theorem~\ref{thm:main_uniform_regularity} and
Theorem~\ref{thm:main_inviscid_limit} by a standard approximation, compactness
and continuity argument.

\section{Preliminaries}\label{sec:preliminaries}

In this section, we collect some notation and inequalities which will be used
throughout the paper. Since boundary layers may appear in the presence of
physical boundaries, it is in general impossible to obtain uniform estimates in
standard Sobolev spaces. Following the conormal framework developed by Wang, Xin and Yong
\cite{WangXinYong2015} for the compressible Navier--Stokes equations with
general Navier--slip boundary conditions, we derive uniform estimates for the
Navier--Stokes--Poisson system. Compared with \cite{WangXinYong2015}, the main
new difficulty comes from the Poisson coupling and the electric force
\(\rho\nabla\phi\), which require uniform elliptic estimates for the electric
field and careful treatment of the interaction between the fluid variables and
the potential.

Let \(\Omega\subset\mathbb R^3\) be a bounded smooth domain. We assume that
\(\Omega\) has a finite covering such that
\begin{equation}\label{eq:domain_covering}
\Omega\subset \Omega_0\cup\bigcup_{k=1}^{n}\Omega_k,
\end{equation}
where \(\overline{\Omega}_0\subset\Omega\), and in each \(\Omega_k\), there
exists a smooth function \(\psi_k\) such that
\[
\Omega\cap\Omega_k
=
\left\{
x=(x_1,x_2,x_3)\mid x_3>\psi_k(x_1,x_2)
\right\}\cap\Omega_k,
\]
and
\[
\partial\Omega\cap\Omega_k
=
\left\{
x=(x_1,x_2,x_3)\mid x_3=\psi_k(x_1,x_2)
\right\}\cap\Omega_k.
\]
We say that \(\Omega\) is of class \(C^m\) if the functions \(\psi_k\) are of
class \(C^m\).

To define the Sobolev conormal spaces, we consider \((Z_k)_{1\leq k\leq N}\),
a finite set of generators of vector fields which are tangent to
\(\partial\Omega\), and set
\[
H^m_{co}
=
\left\{
f\in L^2(\Omega)\mid Z^I f\in L^2(\Omega),\ |I|\leq m
\right\},
\]
where \(I=(k_1,\ldots,k_l)\) is a spatial multi-index. The following notation
will be used:
\[
\|u\|_m^2
=
\|u\|_{H^m_{co}}^2
=
\sum_{j=1}^3
\sum_{|I|\leq m}
\|Z^Iu_j\|_{L^2}^2,
\qquad
\|u\|_{m,\infty}^2
=
\sum_{j=1}^3
\sum_{|I|\leq m}
\|Z^Iu_j\|_{L^\infty}^2,
\]
and
\[
\|\nabla Z^m u\|^2
=
\sum_{|I|=m}
\|\nabla Z^Iu\|_{L^2}^2.
\]

For scalar functions, the same notation is used without the summation over the
components.

Noting that by using the covering of \(\Omega\), one can always assume that
each vector field is supported in one of the \(\Omega_i\). Moreover, in
\(\Omega_0\), the norm \(\|\cdot\|_m\) yields a control of the standard
\(H^m\)-norm, whereas if \(\Omega_i\cap\partial\Omega\neq\emptyset\), there is
no control of the normal derivatives.

Denote by \(C_k\) a positive constant independent of \(\varepsilon\in(0,1]\)
which depends only on the \(C^k\)-norm of the functions \(\psi_j\),
\(j=1,\ldots,n\). Since \(\partial\Omega\) is given locally by
\(x_3=\psi(x_1,x_2)\), we use the coordinates
\[
\Psi:(y,z)\mapsto (y,\psi(y)+z)=x,
\qquad y=(y_1,y_2).
\]
A local basis is given by
\[
e_{y_1}=(1,0,\partial_1\psi)^t,\qquad
e_{y_2}=(0,1,\partial_2\psi)^t,\qquad
e_z=(0,0,-1)^t .
\]
On the boundary, \(e_{y_1}\) and \(e_{y_2}\) are tangent to \(\partial\Omega\).
We take the conormal vector fields
\begin{equation}\label{eq:conormal_vector_fields}
Z_i=\partial_{y_i}=\partial_i+\partial_i\psi\,\partial_z,
\quad i=1,2,
\qquad
Z_3=\varphi(z)\partial_z,
\end{equation}
where
\[
\varphi(z)=\frac{z}{1+z}
\]
is smooth on \(\mathbb R_+\), satisfies \(\varphi(0)=0\),
\(\varphi'(0)>0\), and \(\varphi(z)>0\) for \(z>0\). We also set
\[
Z_0=\partial_t.
\]
It is easy to check that
\[
Z_kZ_j=Z_jZ_k,\qquad j,k=1,2,3,
\]
and
\[
\partial_zZ_i=Z_i\partial_z,\quad i=1,2,
\qquad
\partial_zZ_3\neq Z_3\partial_z .
\]

For a vector field \(u\), its coordinates in the local basis
\((e_{y_1},e_{y_2},e_z)\) are denoted by \(u^i\), namely
\[
u=u^1e_{y_1}+u^2e_{y_2}+u^3e_z.
\]
We use \(u_i\) to denote the coordinates of \(u\) in the standard basis of
\(\mathbb R^3\). The unit outward normal vector is given locally by
\[
n(x)=n(\Psi(y,z))
=
\frac{1}{\sqrt{1+|\nabla\psi(y)|^2}}
\begin{pmatrix}
\partial_1\psi(y)\\
\partial_2\psi(y)\\
-1
\end{pmatrix}.
\]
We also define the orthogonal projection onto the tangent plane by
\[
\Pi u
=
u-(u\cdot n)n .
\]
In the local coordinate patch, both \(n\) and \(\Pi\) are extended smoothly
inside the domain and are independent of \(z\).

For a space-time multi-index
\[
\alpha=(\alpha_0,\alpha_1),
\qquad
\alpha_1=(\alpha_{11},\alpha_{12},\alpha_{13}),
\]
we write
\[
Z^\alpha
=
\partial_t^{\alpha_0}
Z_1^{\alpha_{11}}
Z_2^{\alpha_{12}}
Z_3^{\alpha_{13}},
\qquad
|\alpha|=\alpha_0+\alpha_{11}+\alpha_{12}+\alpha_{13}.
\]
After introducing \(Z_0=\partial_t\), expressions such as
\(\|Z^k f\|_{L^2}^2\) and \(\|\nabla Z^k f\|_{L^2}^2\) denote the sums
\(\sum_{|\alpha|=k}\|Z^\alpha f\|_{L^2}^2\) and
\(\sum_{|\alpha|=k}\|\nabla Z^\alpha f\|_{L^2}^2\), respectively,
over space-time multi-indices. Purely spatial conormal norms retain the
notation \(\|\cdot\|_k\). Expressions such as \(\partial_t^k p_0\) mean
\((\partial_t^k p)|_{t=0}\), with time derivatives evaluated recursively
from the equations; the same convention applies to \(u_0\) and \(\phi_0\).

For a smooth function \(f=f(t,x)\), define
\begin{equation}\label{eq:conormal_norm_def}
\|f(t)\|_{\mathcal H^m}^2
:=
\sum_{|\alpha|\leq m}
\|Z^\alpha f(t)\|_{L^2(\Omega)}^2,
\end{equation}
and
\begin{equation}\label{eq:conormal_Linfty_norm_def}
\|f(t)\|_{\mathcal H^{k,\infty}}^2
:=
\sum_{|\alpha|\leq k}
\|Z^\alpha f(t)\|_{L^\infty(\Omega)}^2.
\end{equation}
Thus, \(\|\cdot\|_m\) only involves the spatial conormal vector fields
\(Z_1,Z_2,Z_3\), while \(\|\cdot\|_{\mathcal H^m}\) also contains the time
derivative \(Z_0=\partial_t\). Throughout this paper, \(\|\cdot\|\) denotes norms over \(\Omega\), while
\(|\cdot|_{H^s(\partial\Omega)}\) denotes the standard Sobolev norm of the
trace on the boundary \(\partial\Omega\).

\begin{proposition}\label{prop:div_curl}
Let \(m\in\mathbb N_+\). Let \(u\in H^m(\Omega)\) be a vector-valued function.
Then there exists a constant \(C>0\), independent of \(u\), such that
\begin{equation}\label{eq:div_curl_normal}
\|u\|_{H^m}
\leq
C\left(
\|\nabla\times u\|_{H^{m-1}}
+
\|\diver u\|_{H^{m-1}}
+
\|u\|_{H^{m-1}}
+
|u\cdot n|_{H^{m-\frac12}(\partial\Omega)}
\right),
\end{equation}
and
\begin{equation}\label{eq:div_curl_tangent}
\|u\|_{H^m}
\leq
C\left(
\|\nabla\times u\|_{H^{m-1}}
+
\|\diver u\|_{H^{m-1}}
+
\|u\|_{H^{m-1}}
+
|n\times u|_{H^{m-\frac12}(\partial\Omega)}
\right).
\end{equation}
\end{proposition}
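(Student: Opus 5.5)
We argue by induction on \(m\), with the case \(m=1\) as the core step. The proof is the classical div--curl (Friedrichs-type) estimate, as in Foias--Temam or Bourguignon--Brezis, and in the conormal literature it is usually quoted. The strategy is to reduce, by lifting the boundary datum, to a vector field with vanishing normal (resp.\ tangential) trace. For that field the \(H^1\) norm is controlled by an integral identity involving \(\nabla\times u\), \(\diver u\) and a boundary term in the second fundamental form. Higher regularity then follows by localization, tangential differentiation, and solving for normal derivatives from the div and curl equations.

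\textbf{Step 1: \(m=1\).} For \eqref{eq:div_curl_normal}, let \(g=u\cdot n\in H^{1/2}(\partial\Omega)\). Let \(w=\nabla\theta\), where \(\theta\) solves the Neumann problem
\[
\Delta\theta=c,\qquad \partial_n\theta=g \text{ on }\partial\Omega ,
\]
with the constant \(c\) chosen for compatibility. Elliptic regularity gives \(\|w\|_{H^1}\le C|g|_{H^{1/2}}\), and \(v=u-w\) has \(v\cdot n=0\). For smooth \(v\) with \(v\cdot n=0\) we have the identity
\[
\int_\Omega|\nabla v|^2=\int_\Omega\bigl(|\nabla\times v|^2+|\diver v|^2\bigr)-\int_{\partial\Omega}\mathcal B(v,v)\,dS,
\]
where \(\mathcal B\) is the second fundamental form. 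It is obtained by integrating \(\partial_iv_j\partial_jv_i\) by parts and using \(v\cdot n=0\) to rewrite \((v\cdot\nabla)v\cdot n=-v\cdot\nabla n\cdot v\). We bound the boundary term by the trace inequality
\[
|v|_{L^2(\partial\Omega)}^2\le\delta\|\nabla v\|_{L^2}^2+C_\delta\|v\|_{L^2}^2
\]
and absorb. Density of smooth fields extends the estimate to \(H^1\).

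For \eqref{eq:div_curl_tangent}, lift \(n\times u\) instead. We take \(w\in H^1\) with \(n\times w=n\times u\), for instance by extending the tangential part \(-n\times(n\times u)\) with a bounded right inverse of the trace. The same identity holds for \(v\) with \(n\times v=0\), now with a boundary term involving the mean curvature times \((v\cdot n)^2\). It is handled identically.

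\textbf{Step 2: induction \(m-1\Rightarrow m\).} Using the covering \eqref{eq:domain_covering} and a partition of unity \(\chi_k\), interior pieces follow from the whole-space identity \(\|\nabla v\|^2=\|\nabla\times v\|^2+\|\diver v\|^2\) for compactly supported \(v\). Near the boundary we flatten with \(\Psi\) and apply tangential derivatives \(\partial_{y}^{\beta}\), \(|\beta|\le m-1\), to \(\chi_k u\). Tangential derivatives preserve the type of boundary condition up to lower-order commutators: \(\partial_y^\beta(u\cdot n)\) equals \(\partial_y^\beta u\cdot n\) plus lower-order terms, and similarly for \(n\times u\). The \(m=1\) estimate applied to \(\partial_y^\beta(\chi_ku)\) then controls all derivatives with at most one normal derivative. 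The commutators are bounded by \(\|u\|_{H^{m-1}}\) and by the boundary norm \(|u\cdot n|_{H^{m-1/2}}\) (resp.\ \(|n\times u|_{H^{m-1/2}}\)). The remaining normal derivatives \(\partial_z^ju\), \(j\ge2\), are recovered algebraically. In local coordinates, \(\diver u\) and \(\nabla\times u\) together determine \(\partial_zu\) as a smooth combination of tangential derivatives of \(u\), \(\diver u\) and \(\nabla\times u\), because the normal symbol of the div--curl system is injective. Iterating in \(j\) closes the \(H^m\) bound.

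\textbf{Main obstacle.} We expect the main difficulty to be the boundary bookkeeping in Step 2. One must check that the commutators of \(\partial_y^\beta\) with the boundary operators \(u\mapsto u\cdot n\) and \(u\mapsto n\times u\) involve only lower-order traces, controlled through the trace theorem \(|f|_{H^{s-1/2}(\partial\Omega)}\le C\|f\|_{H^s}\) by \(\|u\|_{H^{m-1}}\). The loss of smoothness of \(n\) must also be tolerated, which is why the domain regularity must exceed \(m\); for \(\Omega\) of class \(C^{m+2}\) this is satisfied. Alternatively, one can cite the standard reference, since the proposition is classical and all constants depend only on \(\Omega\) and \(m\).
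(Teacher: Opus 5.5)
Your proof is correct, and it is the classical argument (Foias--Temam, Bourguignon--Brezis) that you cite. The paper gives no proof of Proposition~\ref{prop:div_curl}. It is recorded in the preliminaries as a standard tool of the conormal literature and simply invoked later, for instance in \eqref{eq:basic_div_curl_use} in the proof of Lemma~\ref{lem:basic_energy}. So there is no competing route to compare with. What your reconstruction adds is a self-contained argument that shows where the boundary regularity enters. The commutators $[\nabla\times,\partial_y^\beta]$ and $[\diver,\partial_y^\beta]$, and the boundary terms $\partial_y^{\gamma}(\chi_k u)\cdot\partial_y^{\beta-\gamma}n$ (with $|\gamma|\le m-2$, together with their cross-product analogues), only use $\psi\in C^{m+1}$, which is within the paper's $C^{m+2}$ hypothesis.

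Three minor bookkeeping points:
\begin{enumerate}
\item The induction on $m$ is never actually used. Step~2 only calls the $m=1$ estimate, and every commutator lands in $\|u\|_{H^{m-1}}$, which already sits on the right-hand side.
\item The $m=1$ estimate should be applied in $\Omega$ to $\partial_y^\beta(\chi_k u)$, with $\partial_{y_i}=Z_i$ regarded as vector fields in the $x$-variables. It should not be applied to the pulled-back field on the half-space, where div and curl acquire variable coefficients.
\item The density step is simplest if you prove the $m=1$ inequality for $u\in C^\infty(\overline\Omega)$ and then pass to the limit. Use lifts that are bounded both $H^{1/2}\to H^1$ and $H^{3/2}\to H^2$, so that $v\in H^2$ justifies the integrations by parts. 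Both sides are continuous on $H^1(\Omega)$ by the trace theorem, so you never need an approximation that preserves $v\cdot n=0$ or $n\times v=0$.
\end{enumerate}
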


We shall also use the following conormal product estimate.

\begin{proposition}\label{prop:conormal_product}
For \(u,v\in L^\infty(\Omega)\cap H_{co}^k(\Omega)\) and spatial
multi-indices \(\alpha_1,\alpha_2\), it holds that
\begin{equation}\label{eq:conormal_product}
\|Z^{\alpha_1}u\,Z^{\alpha_2}v\|_{L^2}
\leq
C
\left(
\|u\|_{L^\infty}\|v\|_{H_{co}^k}
+
\|v\|_{L^\infty}\|u\|_{H_{co}^k}
\right),
\qquad
|\alpha_1|+|\alpha_2|=k .
\end{equation}
For space--time conormal derivatives, we use the time-integrated version
of this estimate; see \cite[Proposition~2.2]{WangXinYong2015}. For smooth
\(f,g\), \(|\beta|+|\nu|\leq k\), and \(0<t\leq T\),
\[
\begin{aligned}
\int_0^t\|Z^\beta f\,Z^\nu g\|_{L^2}^2\,d\tau
\leq C\bigg(&\|f\|_{L^\infty([0,t]\times\Omega)}^2
\int_0^t\|g\|_{\mathcal H^k}^2\,d\tau\\
&+\|g\|_{L^\infty([0,t]\times\Omega)}^2
\int_0^t\|f\|_{\mathcal H^k}^2\,d\tau\bigg).
\end{aligned}
\]
\end{proposition}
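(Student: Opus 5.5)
The statement to prove is Proposition~\ref{prop:conormal_product}, the conormal Gagliardo--Nirenberg-type product estimate. I would first reduce it to a local problem. Take a partition of unity $\{\chi_k\}$ subordinate to the covering \eqref{eq:domain_covering} and write $u=\sum_k\chi_k u$. In $\Omega_0$ the fields $Z_i$ control all derivatives, so the estimate there is the classical Moser/Gagliardo--Nirenberg product inequality in $H^k$. Commutators $[Z^\alpha,\chi_k]$ only produce lower-order conormal derivatives with bounded coefficients, and these are absorbed into the right-hand side. In a boundary patch $\Omega_k$ I would pull back by $\Psi$ to the half-space $\{z>0\}$. There $Z_1,Z_2$ become $\partial_{y_1},\partial_{y_2}$ and $Z_3=\varphi(z)\partial_z$. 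Since $\psi\in C^k$, the $L^2$ and $L^\infty$ norms change only by constants $C_k$.

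The core step is the anisotropic interpolation inequality on the half-space:
\[
\|Z^{\alpha}f\|_{L^{2k/|\alpha|}}\le C\|f\|_{L^\infty}^{1-|\alpha|/k}\|f\|_{H^k_{co}}^{|\alpha|/k},\qquad |\alpha|\le k .
\]
To prove it, I would change variables $z=e^{s}$ near $z=0$ (or use $s=\int dz/\varphi$). This turns $\varphi(z)\partial_z$ into $\partial_s$ up to a smooth bounded factor, and the measure becomes $dz\sim w(s)\,ds$. The three conormal fields then become ordinary commuting derivatives $\partial_{y_1},\partial_{y_2},\partial_s$ on $\mathbb R^3$ with a weight. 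The standard Gagliardo--Nirenberg proof by repeated integration by parts can then be run as follows. For $|\alpha|=j$, integrate $|Z^\alpha f|^{p}$ with one $Z$ moved off, and use Hölder. No boundary terms appear, since $\varphi(0)=0$ and the $Z_i$ are tangential. The weight derivative produces only lower-order terms.

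\textbf{Main obstacle.} The delicate point is this integration by parts with $Z_3$. We have $Z_3^*=-Z_3-\varphi'$, so extra lower-order terms with coefficients $\varphi'$ and $\partial_i\psi$ appear, and I must check that they close by induction on $j$. The non-commutation $\partial_z Z_3\ne Z_3\partial_z$ is irrelevant here, because only $Z$'s appear.

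With the interpolation inequality in hand, the product bound follows. Set $j_1=|\alpha_1|$ and $j_2=|\alpha_2|$ with $j_1+j_2=k$. By Hölder with exponents $2k/j_1$ and $2k/j_2$,
\[
\|Z^{\alpha_1}u\,Z^{\alpha_2}v\|_{L^2}\le C\big(\|u\|_{L^\infty}\|v\|_{k}\big)^{j_2/k}\big(\|v\|_{L^\infty}\|u\|_{k}\big)^{j_1/k}.
\]
Young's inequality then gives \eqref{eq:conormal_product}. The cases $j_1=0$ or $j_2=0$ are trivial.

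For the space--time version I would treat $Z_0=\partial_t$ as one more commuting tangential field on $[0,t]\times\Omega$. I would then repeat the interpolation in $(t,y,s)$ with $L^p_{t,x}$ norms, integrating by parts in $t$ as well. This produces endpoint terms at $\tau=0,t$, and these need care. To avoid them, I would either extend $f,g$ in time by reflection, with bounded extension operators in both $L^\infty$ and $\int\|\cdot\|_{\mathcal H^k}^2$, or simply invoke \cite[Proposition~2.2]{WangXinYong2015} as the statement suggests. Hölder in space--time followed by Young then yields the integrated inequality.
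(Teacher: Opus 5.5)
The paper gives no proof of this proposition. It quotes the spatial estimate as the known conormal Gagliardo--Nirenberg--Moser inequality and refers to Proposition~2.2 of Wang, Xin and Yong for the time-integrated form. Your spatial argument is the classical proof behind that reference: the interpolation $\|Z^\alpha f\|_{L^{2k/j}}\le C\|f\|_{L^\infty}^{1-j/k}\|f\|_{H^k_{co}}^{j/k}$ for $|\alpha|=j$, followed by H\"older and Young. It works, but the step you flag is exactly where a careless closure fails.

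If the $\operatorname{div}Z_i$ term (your $\varphi'$ term) $\int|g|\,|Z_ig|^{p-1}$ is bounded by H\"older against the constant function, you only get $M_j\lesssim(M_{j-1}M_{j+1})^{1/2}+M_{j-1}$. This puts an extra term such as $\|u\|_{L^\infty}\|v\|_{L^\infty}$ into the product bound. That term is not controlled by the right-hand side: for a boundary-layer profile $u=v=\chi(z/\delta)$ one has $\|u\|_{L^\infty}=1$ but $\|u\|_{H^k_{co}}\sim\delta^{1/2}$.

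The scale-correct bookkeeping uses the inhomogeneous quantities $M_j:=\max_{|\alpha|\le j}\|Z^\alpha f\|_{L^{2k/j}}$, so that $M_0=\|f\|_{L^\infty}$ and $M_k\le\|f\|_{H^k_{co}}$.
\begin{itemize}
\item For $|\alpha|=j$, write $Z^\alpha f=Z_ig$ and $p=2k/j$. Apply H\"older with $g\in L^{2k/(j-1)}$, $|Z_ig|^{p-2}\in L^{p/(p-2)}$, and the last factor in $L^{2k/(j+1)}$. This bounds both the principal term (last factor $Z_i^2g$) and the $\operatorname{div}Z_i$ term (last factor $Z_ig$, which has only $j\le j+1$ derivatives) by $CM_{j-1}M_j^{p-2}M_{j+1}$.
\item The components of $M_j$ with $|\alpha|<j$ satisfy $\|h\|_{L^{2k/j}}^2\le\|h\|_{L^{2k/(j-1)}}\|h\|_{L^{2k/(j+1)}}\le M_{j-1}M_{j+1}$.
\end{itemize}
Hence $M_j^2\le CM_{j-1}M_{j+1}$, and discrete log-convexity gives $M_j\le CM_0^{1-j/k}M_k^{j/k}$ (for smooth $f$, then by density).

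Neither the variable $s$ nor the localization is actually needed. Every generator is tangent to $\partial\Omega$, so $\int_\Omega Z_i(gh)\,dx=-\int_\Omega gh\,\operatorname{div}Z_i\,dx$. The whole argument therefore runs on $\Omega$ with the global fields, which also removes the issue of compositions of fields from different patches.

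For the time-integrated version, the reflection step cannot be done with bounds uniform in $t$. Unlike the spatial fields, $\partial_t$ is transversal to $\{\tau=0\}$ and $\{\tau=t\}$. In fact the estimate as written cannot hold with $C$ independent of $t\in(0,T]$. Take $f=g=\tau$, $Z^\beta=Z^\nu=\partial_t$ and $k=2$: the left side equals $|\Omega|t$, while the right side equals $2C|\Omega|t^2(t+t^3/3)$.

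Consequently, any extension from $[0,t]$ that is bounded on $L^\infty$ and on $\int_0^t\|\cdot\|_{\mathcal H^k}^2$ must have norm tending to infinity as $t\to0$. Otherwise the argument above, run on $\mathbb R_t\times\Omega$ where there are no endpoint terms, would contradict this example. What your construction actually proves is the space--time inequality with $C$ depending on a lower bound for $t$, and you should state it that way. Relying on the cited Proposition~2.2 of Wang, Xin and Yong, as the paper does, does not remove this dependence either.
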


The following anisotropic Sobolev embedding and trace estimates will be used
frequently.

\begin{proposition}\label{prop:anisotropic_trace}
Let \(m_1\geq0\), \(m_2\geq0\) be integers and
\(f\in H_{\mathrm{co}}^{m_1}(\Omega)\cap H_{\mathrm{co}}^{m_2}(\Omega)\) with
\(\nabla f\in H_{\mathrm{co}}^{m_2}(\Omega)\).

The following anisotropic Sobolev embedding holds:
\begin{equation}\label{eq:anisotropic_sobolev}
\|f\|_{L^\infty}^2
\leq
C
\left(
\|\nabla f\|_{H_{\mathrm{co}}^{m_2}}
+
\|f\|_{H_{\mathrm{co}}^{m_2}}
\right)
\|f\|_{H_{\mathrm{co}}^{m_1}},
\end{equation}
provided that \(m_1+m_2\geq 3\).

The following trace estimate holds:
\begin{equation}\label{eq:trace_estimate}
|f|_{H^s(\partial\Omega)}^2
\leq
C
\left(
\|\nabla f\|_{H_{\mathrm{co}}^{m_2}}
+
\|f\|_{H_{\mathrm{co}}^{m_2}}
\right)
\|f\|_{H_{\mathrm{co}}^{m_1}},
\end{equation}
provided that \(m_1+m_2\geq 2s\geq0\).
\end{proposition}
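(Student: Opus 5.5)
The anisotropic embedding \eqref{eq:anisotropic_sobolev} and the trace estimate \eqref{eq:trace_estimate} are local, so I would first reduce to a flat half-space. Using the covering \eqref{eq:domain_covering} and a subordinate partition of unity $\{\chi_k\}$, split $f=\sum_k\chi_k f$. For the interior piece $\chi_0 f$, the conormal norms control the full Sobolev norms. Both estimates then follow from the standard interpolated embedding $\|g\|_{L^\infty}^2\le C\|g\|_{H^{s_1}}\|g\|_{H^{s_2}}$ with $s_1+s_2>3$, which we apply with $s_1=m_2+1$ and $s_2=m_1$, together with the standard trace theorem. For a boundary patch $\Omega_k$, pull back by $\Psi(y,z)=(y,\psi_k(y)+z)$. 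Under this map the fields $Z_1,Z_2$ become $\partial_{y_1},\partial_{y_2}$, and $Z_3$ becomes $\varphi(z)\partial_z$; the Jacobian and $\nabla$ change only by $C^{m}$ coefficients depending on $\psi_k$. So it suffices to prove both inequalities for $g=\chi_k f\circ\Psi$ on $\mathbb R^2\times\mathbb R_+$, with $\nabla$ replaced by $\partial_z$ together with the tangential derivatives $\partial_y$. Commuting $\chi_k$ past $Z^I$ produces only lower-order terms, which are bounded by the right-hand sides.

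In the half-space I would take the tangential Fourier transform $\hat g(\xi,z)$ in $y$. Fix $z$ and start from the one-dimensional inequality
\[
\sup_z|\hat g(\xi,z)|^2\le 2\|\hat g(\xi,\cdot)\|_{L^2_z}\|\partial_z\hat g(\xi,\cdot)\|_{L^2_z}.
\]
Then
\[
\|g\|_{L^\infty}\le\int|\hat g(\xi,\cdot)|_{L^\infty_z}\,d\xi\le\int\|\hat g\|_{L^2_z}^{1/2}\|\partial_z\hat g\|_{L^2_z}^{1/2}\,d\xi .
\]
Insert the weight $(1+|\xi|)^{m_1/2+m_2/2}$ and its inverse, and apply Cauchy--Schwarz. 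This gives
\[
\|g\|_{L^\infty}^2\le\Big(\int(1+|\xi|)^{-(m_1+m_2)}d\xi\Big)\int(1+|\xi|)^{m_1+m_2}\|\hat g\|_{L^2_z}\|\partial_z\hat g\|_{L^2_z}\,d\xi .
\]
The first factor is finite only when $m_1+m_2>2$, i.e.\ $m_1+m_2\ge3$; this is where the hypothesis enters.

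For the second factor, write $(1+|\xi|)^{m_1+m_2}=(1+|\xi|)^{m_1}(1+|\xi|)^{m_2}$ and use Cauchy--Schwarz again. This bounds it by $\|\partial_y^{\le m_1}g\|_{L^2}\,\|\partial_y^{\le m_2}\partial_z g\|_{L^2}$, which is at most $\|g\|_{H^{m_1}_{co}}\big(\|\nabla g\|_{H^{m_2}_{co}}+\|g\|_{H^{m_2}_{co}}\big)$. Only tangential derivatives $\partial_y$ appear here, and these are conormal. For the trace estimate \eqref{eq:trace_estimate}, apply the same argument at $z=0$ using $|\hat g(\xi,0)|^2\le2\|\hat g\|_{L^2_z}\|\partial_z\hat g\|_{L^2_z}$. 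Integrating against $(1+|\xi|)^{2s}\le(1+|\xi|)^{m_1+m_2}$ gives $|g(\cdot,0)|_{H^s}^2$ bounded by the same product, with no integrability condition. This explains why the weaker hypothesis $m_1+m_2\ge2s$ suffices.

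I expect the main obstacle to be the bookkeeping in the change of variables rather than the Fourier argument. One must check two things: that $\partial_z$ of the pulled-back function is controlled by $\nabla f$ with $m_2$ conormal derivatives, and that the tangential derivatives $\partial_y^\beta$ of $\partial_z g$ are expressible through $Z^\beta\nabla f$ plus lower-order terms. This holds because $\partial_z$ commutes with $Z_1$ and $Z_2$, and the weight $\varphi(z)\partial_z$ never needs to be used. The remaining care is to keep the constant dependent only on the $C^{m_1+m_2+1}$ norms of the $\psi_k$, which is consistent with the notation $C_k$ of the paper.
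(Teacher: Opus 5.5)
Your proof is correct. It follows the standard argument from the conormal literature (Masmoudi--Rousset, reused by Wang--Xin--Yong), which the paper quotes without proof: localize, flatten the boundary, take the tangential Fourier transform, apply $|\hat g(\xi,z)|^2\le 2\|\hat g(\xi,\cdot)\|_{L^2_z}\|\partial_z\hat g(\xi,\cdot)\|_{L^2_z}$ for fixed $\xi$ (you wrote ``fix $z$''), and split the weight $(1+|\xi|)^{m_1+m_2}$ by Cauchy--Schwarz, where $\int_{\mathbb R^2}(1+|\xi|)^{-(m_1+m_2)}\,d\xi<\infty$ is exactly the condition $m_1+m_2\geq3$; also, the interior piece $\chi_0 f$ has zero trace, so no trace theorem is needed there.
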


\begin{proposition}\label{prop:poisson_neumann_estimate}
Assume that \(\Omega\subset\mathbb R^3\) is a bounded smooth domain and that
\(\phi\) satisfies
\[
\Delta\phi=q
\quad\text{in }\Omega,
\qquad
\nabla\phi\cdot n=0
\quad\text{on }\partial\Omega .
\]
Then there exists a positive constant \(C\), depending only on \(\Omega\), such
that
\begin{equation}\label{eq:poisson_H2_estimate}
\|\nabla^2\phi\|_{L^2}
\leq
C
\left(
\|q\|_{L^2}
+
\|\nabla\phi\|_{L^2}
\right).
\end{equation}
Moreover, for the conormal Sobolev space, one has
\begin{equation}\label{eq:poisson_conormal_estimate}
\|\nabla^2\phi\|_{\mathcal H^s}
\leq
C_{s+1}
\left(
\|q\|_{\mathcal H^s}
+
\|\nabla^2\phi\|_{\mathcal H^{s-1}}
+
\|\nabla\phi\|_{L^2}
\right)
\leq
C_{s+1}
\left(
\|q\|_{\mathcal H^s}
+
\|\nabla\phi\|_{L^2}
\right),
\end{equation}
for \(s=1,\ldots,m\).
\end{proposition}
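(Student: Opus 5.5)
The $H^2$ bound \eqref{eq:poisson_H2_estimate} I would take from standard elliptic regularity for the Neumann problem. Since $\nabla\phi\cdot n=0$, Green's formula on a smooth bounded domain gives
\[
\|\nabla^2\phi\|_{L^2}^2=\|\Delta\phi\|_{L^2}^2+\int_{\partial\Omega}\mathcal B(\nabla_\tau\phi,\nabla_\tau\phi)\,dS,
\]
where $\mathcal B$ is the second fundamental form. The boundary integral is bounded by $C|\nabla\phi|_{L^2(\partial\Omega)}^2$. By the trace inequality and interpolation this is at most $\delta\|\nabla^2\phi\|_{L^2}^2+C_\delta\|\nabla\phi\|_{L^2}^2$. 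Absorbing the $\delta$ term gives the first estimate. (Alternatively one can simply cite the $H^2$ Neumann regularity.)

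For the conormal estimate \eqref{eq:poisson_conormal_estimate} I would argue by induction on $s$ and localize with the covering \eqref{eq:domain_covering}. In the interior patch $\Omega_0$, conormal norms are equivalent to standard Sobolev norms, so interior elliptic regularity gives the bound directly.

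Near the boundary, work in the coordinates $\Psi$ and apply $Z^\alpha$ with $|\alpha|= s$. If $\alpha$ contains time derivatives, these commute with $\Delta$ and with the boundary condition, because $n$ is time independent. For spatial derivatives, set $\phi_\alpha=\chi Z^\alpha\phi$ with a cutoff $\chi$. This function satisfies
\[
\Delta\phi_\alpha=\chi Z^\alpha q+[\Delta,\chi Z^\alpha]\phi .
\]
The commutator $[\Delta,Z^\alpha]$ involves at most second derivatives of $Z^\beta\phi$ with $|\beta|\le s-1$, with coefficients depending on $C^{s+1}$ norms of $\psi$. The derivative $\partial_z Z_3$ is handled via $[\partial_z,Z_3]=\varphi'\partial_z$, and these terms are controlled by $\|\nabla^2\phi\|_{\mathcal H^{s-1}}+\|\nabla\phi\|_{\mathcal H^{s-1}}$. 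The boundary condition becomes
\[
\partial_n\phi_\alpha=\chi[\partial_n,Z^\alpha]\phi+(\partial_n\chi)Z^\alpha\phi\quad\text{on }\partial\Omega .
\]
Here I use that $Z_3$ vanishes on the boundary and $Z_1,Z_2$ are tangential, so the commutator involves only tangential derivatives of $\nabla\phi$ of order at most $s$.

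I expect this inhomogeneous Neumann datum to be the main obstacle, since a naive $H^{1/2}(\partial\Omega)$ bound costs half a derivative. My first choice is to avoid boundary norms altogether and recover second derivatives from the equation. In local coordinates,
\[
\Delta = a\,\partial_z^2+(\text{terms with at most one }\partial_z\text{, multiplied by }Z_1,Z_2).
\]
The tangential-type components $Z_i\nabla\phi$ are controlled by an energy estimate for $\phi_\alpha$: multiply by $Z^\alpha\phi$ and integrate by parts. The resulting boundary term only involves tangential quantities, which can be integrated by parts along $\partial\Omega$. The remaining component $\partial_z^2 Z^{\alpha'}\phi$ is then solved for algebraically from the equation, giving $\|\partial_z^2\phi\|_{\mathcal H^s}\lesssim\|q\|_{\mathcal H^s}+\|\nabla Z_y\nabla\phi\|_{\mathcal H^{s-1}}+\dots$.

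Summing over patches yields the first inequality, with remainder $\|\nabla^2\phi\|_{\mathcal H^{s-1}}+\|\nabla\phi\|_{\mathcal H^{s-1}}$. The lower-order term $\|\nabla\phi\|_{\mathcal H^{s-1}}$ is bounded by $\|\nabla^2\phi\|_{\mathcal H^{s-2}}+\|\nabla\phi\|_{L^2}$, plus time-derivative terms treated the same way. Iterating down to $s=0$, where \eqref{eq:poisson_H2_estimate} applies (and applies to each $\partial_t^k\phi$), gives the second inequality.
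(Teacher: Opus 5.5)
The paper states this proposition without proof, as standard elliptic theory, so there is no argument of the paper's to compare with; on its own terms your sketch has two problems.

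\textbf{Lower-order terms with time derivatives (genuine gap).} Your $H^2$ argument via the boundary identity with the second fundamental form is fine. The gap appears once $\mathcal H^s$ contains time derivatives. For $\alpha=(s,0)$ your reduction gives $\|\nabla^2\partial_t^s\phi\|_{L^2}\le C(\|\partial_t^sq\|_{L^2}+\|\nabla\partial_t^s\phi\|_{L^2})$. The term $\|\nabla\partial_t^s\phi\|_{L^2}$ is bounded neither by $\|\nabla^2\phi\|_{\mathcal H^{s-1}}$ nor by $\|\nabla\phi\|_{L^2}$.

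Likewise, the pure-time pieces $\|\nabla\partial_t^k\phi\|_{L^2}$ of $\|\nabla\phi\|_{\mathcal H^{s-1}}$ cannot be ``treated the same way'' as the spatial ones, because $\partial_t$ is not dominated by $\nabla$. Iterating down to $s=0$ only reproduces these terms.

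The missing ingredient is the variational bound for the Neumann problem. If $\partial_n\psi=0$ and $\Delta\psi=f$, test with $\psi-\bar\psi$ and use Poincar\'e--Wirtinger to get $\|\nabla\psi\|_{L^2}\le C\|f\|_{L^2}$. Apply this to $\psi=\partial_t^k\phi$, which inherits the Neumann condition. This gives $\|\nabla\partial_t^k\phi\|_{L^2}\le C\|\partial_t^kq\|_{L^2}\le C\|q\|_{\mathcal H^s}$, which closes both inequalities in \eqref{eq:poisson_conormal_estimate}. It also shows that the term $\|\nabla\phi\|_{L^2}$ in the statement is in fact redundant.

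\textbf{The feared obstacle and the energy detour.} The step you flag as the main obstacle is not one, and the detour you take instead is one order short. Each commutator $[\partial_n,Z_j]$ is a first-order operator. This holds because the coefficients of $\partial_n$ depend only on $y$, and $\partial_n(\varphi\partial_z f)=c\,\partial_z f$ at $z=0$. Hence the Neumann datum $\chi[\partial_n,Z^\alpha]\phi+(\partial_n\chi)Z^\alpha\phi$ contains at most $s-1$ conormal derivatives of $\nabla\phi$, not $s$.

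By the trace theorem, its $H^{1/2}(\partial\Omega)$ norm is then bounded by $\|\nabla^2\phi\|_{\mathcal H^{s-1}}+\|\nabla\phi\|_{\mathcal H^{s-1}}$, which is exactly the remainder allowed. So the standard inhomogeneous Neumann estimate $\|\nabla^2 v\|_{L^2}\le C(\|\Delta v\|_{L^2}+|\partial_n v|_{H^{1/2}(\partial\Omega)}+\|\nabla v\|_{L^2})$, applied to $v=\chi Z^\alpha\phi$, closes the step immediately.

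By contrast, an energy estimate for $\chi Z^\alpha\phi$ with $|\alpha|=s$, as you describe it, controls only $\nabla Z^\alpha\phi$, i.e.\ first derivatives at level $s$. It does not give the second derivatives $\nabla Z_yZ^\alpha\phi$ that you need before solving algebraically for $\partial_z^2Z^\alpha\phi$. To make that route work you would have to:
\begin{enumerate}
\item run the energy estimate at order $s+1$, with at least one $Z_y$;
\item move one tangential field off $Z^\beta q$ by integration by parts (there is no boundary term since the $Z_j$ are tangent), because $q$ is only in $\mathcal H^s$;
\item absorb the commutator $[\Delta,Z^\beta]\phi$, which contains $\nabla^2\phi$ at level $s$, by Young's inequality.
\end{enumerate}
With these additions that route also works, but it is longer than the one you set aside.
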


\section{A Priori Estimates}
\label{sec:apriori}

In this section, we derive a priori estimates for smooth solutions
to \eqref{eq:intro_NSP} together with the boundary conditions
\eqref{eq:intro_Navier_boundary} and \eqref{eq:intro_phi_boundary}. For
notational convenience, we drop the superscript \(\varepsilon\) throughout this
section and only keep the viscosity parameter \(\varepsilon\).

By the continuity equation, for any \(0\leq t\leq T\), one has
\[
|\rho_0(x)|
\exp\!\left(-\int_0^t\|\diver u\|_{L^\infty}\,d\tau\right)
\leq
\rho(x,t)
\leq
|\rho_0(x)|
\exp\!\left(\int_0^t\|\diver u\|_{L^\infty}\,d\tau\right).
\]

Throughout this section, we shall work on the interval of time \([0,T]\) such
that
\[
0<c_0\leq \rho(t,x)\leq \frac1{c_0}<\infty,
\qquad (t,x)\in[0,T]\times\Omega .
\]
We point out that the generic constant \(C\) may depend on \(1/c_0\) in this
section.

Let
\[
\omega=\nabla\times u .
\]
Using
\[
\Delta u=\nabla\diver u-\nabla\times\omega ,
\]
the momentum equation can be rewritten as
\begin{equation}\label{eq:momentum_curl_form}
\rho u_t+\rho u\cdot\nabla u+\nabla p
=
-\mu\varepsilon\nabla\times\omega
+
(2\mu+\lambda)\varepsilon\nabla\diver u
+
\rho\nabla\phi .
\end{equation}

\begin{lemma}[Basic energy estimate]\label{lem:basic_energy}
For a smooth solution of \eqref{eq:intro_NSP} subject to
\eqref{eq:intro_Navier_boundary} and \eqref{eq:intro_phi_boundary}, it holds
that, for \(0<\varepsilon\leq1\),
\begin{equation}\label{eq:basic_energy_estimate}
\begin{aligned}
&\sup_{0\leq \tau\leq t}
\left\{
\int_\Omega
\left(
\frac12\rho |u|^2
+
\frac{1}{\gamma-1}\rho^\gamma
\right)(\tau)\,dx
+
\frac12\|\nabla\phi(\tau)\|_{L^2}^2
\right\}
+
c\varepsilon
\int_0^t
\|\nabla u(\tau)\|_{L^2}^2\,d\tau
\\
&\leq
\int_\Omega
\left(
\frac12\rho_0 |u_0|^2
+
\frac{1}{\gamma-1}\rho_0^\gamma
\right)\,dx
+
\frac12\|\nabla\phi_0\|_{L^2}^2
+
C
\int_0^t
\|u(\tau)\|_{L^2}^2\,d\tau ,
\end{aligned}
\end{equation}
where \(c>0\) depends only on \(\mu\) and \(2\mu+\lambda\).
\end{lemma}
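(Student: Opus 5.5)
We take the $L^2$ inner product of the momentum equation \eqref{eq:intro_NSP}$_2$ with $u$ and integrate over $\Omega$. We then combine the result with the continuity equation to handle the kinetic, pressure and electric terms.

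\textbf{Convective and pressure terms.} Because $u\cdot n=0$ on $\partial\Omega$, the continuity equation gives
\[
\int_\Omega(\rho u_t+\rho u\cdot\nabla u)\cdot u\,dx=\frac{d}{dt}\int_\Omega\tfrac12\rho|u|^2\,dx .
\]
For the pressure we use $\rho\,\partial_t(\rho^{\gamma-1})$-type identities. Since $p=\rho^\gamma$ satisfies $p_t+u\cdot\nabla p+\gamma p\diver u=0$, we get
\[
\int_\Omega\nabla p\cdot u\,dx=-\int_\Omega p\diver u\,dx=\frac{d}{dt}\int_\Omega\frac{1}{\gamma-1}\rho^\gamma\,dx ,
\]
again with no boundary term because $u\cdot n=0$.

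\textbf{Electric term.} This is the genuinely new ingredient. We write
\[
\int_\Omega\rho\nabla\phi\cdot u\,dx=-\int_\Omega\phi\diver(\rho u)\,dx=\int_\Omega\phi\,\rho_t\,dx .
\]
No boundary term appears since $\rho u\cdot n=0$. Differentiating the Poisson equation in time gives $\Delta\phi_t=\rho_t$ with $\partial_n\phi_t=0$. Hence
\[
\int_\Omega\phi\,\rho_t\,dx=\int_\Omega\phi\,\Delta\phi_t\,dx=-\int_\Omega\nabla\phi\cdot\nabla\phi_t\,dx=-\frac12\frac{d}{dt}\|\nabla\phi\|_{L^2}^2 ,
\]
where the Neumann condition \eqref{eq:intro_phi_boundary} kills the boundary integral. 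This term therefore moves to the left-hand side as the electrostatic energy $\frac12\|\nabla\phi\|_{L^2}^2$. The compatibility condition $\int_\Omega(\rho-1)\,dx=0$ is preserved in time and guarantees solvability; $\phi$ is only determined up to a constant, which is harmless here.

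\textbf{Viscous term.} This is the main point needing care. We use the curl form \eqref{eq:momentum_curl_form}. Integrating by parts,
\[
-\mu\varepsilon\int_\Omega\nabla\times\omega\cdot u\,dx=-\mu\varepsilon\|\omega\|_{L^2}^2-\mu\varepsilon\int_{\partial\Omega}(n\times\omega)\cdot u\,dS ,
\]
up to the sign convention for the boundary term. Similarly,
\[
(2\mu+\lambda)\varepsilon\int_\Omega\nabla\diver u\cdot u\,dx=-(2\mu+\lambda)\varepsilon\|\diver u\|_{L^2}^2 ,
\]
with no boundary term since $u\cdot n=0$. On the boundary we insert $n\times\omega=[Bu]_\tau$. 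Then, by the trace inequality and Young's inequality,
\[
\Big|\varepsilon\int_{\partial\Omega}Bu\cdot u\,dS\Big|\le C\varepsilon|u|_{L^2(\partial\Omega)}^2\le \delta\varepsilon\|\nabla u\|_{L^2}^2+C_\delta\varepsilon\|u\|_{L^2}^2 .
\]
Next we need coercivity: $\|\nabla u\|_{L^2}^2\le C(\|\omega\|_{L^2}^2+\|\diver u\|_{L^2}^2+\|u\|_{L^2}^2)$. This follows from Proposition~\ref{prop:div_curl} with $m=1$ and $u\cdot n=0$.

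\textbf{Conclusion.} Together these give a dissipation bounded below by $c\varepsilon\|\nabla u\|_{L^2}^2-C\|u\|_{L^2}^2$, with $c$ depending on $\mu$ and $2\mu+\lambda$, where we used $\varepsilon\le1$. Integrating in time over $[0,\tau]$ and taking the supremum over $\tau\le t$ yields \eqref{eq:basic_energy_estimate}. The main obstacle is this viscous boundary term and the div--curl coercivity. The electric term closes exactly thanks to the Neumann condition, which is precisely where the Dirichlet alternative of Remark~\ref{rmk:insulated_bc} would fail.
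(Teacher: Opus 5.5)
Your proposal is correct and follows the paper's route. Like the paper, you test the curl form of the momentum equation with $u$. You turn the electric work into $-\frac12\frac{d}{dt}\|\nabla\phi\|_{L^2}^2$ via $\int_\Omega\phi\,\Delta\phi_t\,dx$ and the Neumann condition. You then close the viscous part using the Navier--slip condition, the trace inequality with Young, and the div--curl coercivity of Proposition~\ref{prop:div_curl} with $u\cdot n=0$.
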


\begin{proof}
Multiplying \eqref{eq:momentum_curl_form} by \(u\) and integrating over
\(\Omega\), we obtain
\begin{equation}\label{eq:basic_energy_identity_start}
\begin{aligned}
&\int_\Omega \rho u_t\cdot u\,dx
+
\int_\Omega \rho(u\cdot\nabla u)\cdot u\,dx
+
\int_\Omega \nabla p\cdot u\,dx
\\
={}&
-\mu\varepsilon
\int_\Omega
(\nabla\times\omega)\cdot u\,dx
+
(2\mu+\lambda)\varepsilon
\int_\Omega
\nabla\diver u\cdot u\,dx
+
\int_\Omega
\rho\nabla\phi\cdot u\,dx .
\end{aligned}
\end{equation}
Using the continuity equation and
\eqref{eq:intro_Navier_boundary}, we have
\begin{equation}\label{eq:basic_kinetic_part}
\int_\Omega \rho u_t\cdot u\,dx
+
\int_\Omega \rho(u\cdot\nabla u)\cdot u\,dx
=
\frac12\frac{d}{dt}
\int_\Omega
\rho |u|^2\,dx ,
\end{equation}
and, since \(p=\rho^\gamma\),
\begin{equation}\label{eq:basic_pressure_part}
\int_\Omega \nabla p\cdot u\,dx
=
\frac{d}{dt}
\int_\Omega
\frac{1}{\gamma-1}\rho^\gamma\,dx .
\end{equation}
By \eqref{eq:intro_NSP}, \eqref{eq:intro_Navier_boundary}, and
\eqref{eq:intro_phi_boundary}, we get
\begin{equation}\label{eq:basic_electric_part}
\begin{aligned}
\int_\Omega
\rho\nabla\phi\cdot u\,dx
&=
-\int_\Omega
\phi\,\diver(\rho u)\,dx
=
\int_\Omega
\phi\Delta\phi_t\,dx  =
-\frac12\frac{d}{dt}
\|\nabla\phi\|_{L^2}^2 .
\end{aligned}
\end{equation}
We next estimate the viscous terms. Integrating by parts with the boundary conditions 
\eqref{eq:intro_Navier_boundary} yields
\begin{equation}\label{eq:basic_vorticity_part}
\begin{aligned}
-\varepsilon
\int_\Omega
(\nabla\times\omega)\cdot u\,dx
&=
-\varepsilon\|\omega\|_{L^2}^2
-
\varepsilon
\int_{\partial\Omega}
(n\times\omega)\cdot u\,dS  \\
&\leq
-\varepsilon\|\omega\|_{L^2}^2
+
C\varepsilon
|u|_{L^2(\partial\Omega)}^2 ,
\end{aligned}
\end{equation}
and
\begin{equation}\label{eq:basic_divergence_part}
\varepsilon
\int_\Omega
\nabla\diver u\cdot u\,dx
=
-\varepsilon
\|\diver u\|_{L^2}^2 .
\end{equation}
Combining \eqref{eq:basic_energy_identity_start}--
\eqref{eq:basic_divergence_part}, we obtain

\begin{equation}\label{eq:basic_energy_differential_pre}
\begin{aligned}
&\frac{d}{dt}
\left\{
\int_\Omega
\left(
\frac12\rho |u|^2
+
\frac{1}{\gamma-1}\rho^\gamma
\right)\,dx
+
\frac12\|\nabla\phi\|_{L^2}^2
\right\} \\
&\quad
+
\mu\varepsilon\|\omega\|_{L^2}^2
+
(2\mu+\lambda)\varepsilon\|\diver u\|_{L^2}^2
\leq
C\varepsilon
|u|_{L^2(\partial\Omega)}^2 .
\end{aligned}
\end{equation}

By Proposition~\ref{prop:div_curl} and \eqref{eq:intro_Navier_boundary}, we have
\begin{equation}\label{eq:basic_div_curl_use}
\mu\|\omega\|_{L^2}^2
+
(2\mu+\lambda)\|\diver u\|_{L^2}^2
\geq
2c\|\nabla u\|_{L^2}^2
-
C\|u\|_{L^2}^2 .
\end{equation}
Moreover, Proposition~\ref{prop:anisotropic_trace} gives
\begin{equation}\label{eq:basic_trace_use}
\varepsilon
|u|_{L^2(\partial\Omega)}^2
\leq
C\varepsilon
\left(
\|\nabla u\|_{L^2}
+
\|u\|_{L^2}
\right)
\|u\|_{L^2}
\leq
\frac{c}{2}\varepsilon
\|\nabla u\|_{L^2}^2
+
C\varepsilon
\|u\|_{L^2}^2 .
\end{equation}
Since \(0<\varepsilon\leq1\), it follows from
\eqref{eq:basic_energy_differential_pre}--\eqref{eq:basic_trace_use} that
\begin{equation}\label{eq:basic_energy_differential_final}
\begin{aligned}
&\frac{d}{dt}
\left\{
\int_\Omega
\left(
\frac12\rho |u|^2
+
\frac{1}{\gamma-1}\rho^\gamma
\right)\,dx
+
\frac12\|\nabla\phi\|_{L^2}^2
\right\}
+
c\varepsilon
\|\nabla u\|_{L^2}^2
\leq
C\|u\|_{L^2}^2 .
\end{aligned}
\end{equation}
Integrating \eqref{eq:basic_energy_differential_final} over \([0,t]\) gives
\eqref{eq:basic_energy_estimate}. This completes the proof.
\end{proof}
 We shall derive higher-order conormal estimates
for \((p,u)\), together with the corresponding control of the electric
potential. For later use, set
\begin{equation}\label{eq:Em_def}
\begin{aligned}
\mathcal E_m(t)
:={}&
\|(p,u)(t)\|_{\mathcal H^m}^2
+
\|\nabla u(t)\|_{\mathcal H^{m-1}}^2
+
\sum_{k=0}^{m-2}
\|\nabla\partial_t^k p(t)\|_{m-1-k}^2  \\
&+
\varepsilon
\|\nabla\partial_t^{m-1}p(t)\|_{L^2}^2
+
\|\nabla\phi(t)\|_{\mathcal H^m}^2,
\end{aligned}
\end{equation}
and 
\begin{equation}\label{eq:Q_def}
Q(t)
:=
\sup_{0\leq \tau\leq t}
\left\{
\|(\nabla p,\nabla u)(\tau)\|_{\mathcal H^{1,\infty}}^2
+
\|(p,u,p_t,u_t)(\tau)\|_{L^\infty}^2
+
\|\nabla\phi(\tau)\|_{\mathcal H^{1,\infty}}^2
\right\}.
\end{equation}
Here the norm \(\|\cdot\|_{m-1-k}\) in $\mathcal E_m(t)$ denotes the spatial
conormal norm. This convention is used to emphasize that the estimates for
\(\nabla\partial_t^k p\) require at least one spatial derivative.

\begin{lemma}[Estimate for \(\diver u\) and \(\nabla p\)]
\label{lem:div_gradrho_gradphi}
Let \(m\geq0\). Then it holds that
\begin{equation}\label{eq:div_gradrho_gradphi_estimate}
\begin{aligned}
&\sup_{0\leq \tau\leq t}
\left(
\int_\Omega
\rho|\diver u|^2(\tau)\,dx
+
\int_\Omega
\frac{1}{\gamma p}|\nabla p|^2(\tau)\,dx
\right)
+
c\varepsilon
\int_0^t
\|\nabla\diver u(\tau)\|_{L^2}^2\,d\tau  \\
&\leq
C
\left(
\int_\Omega
\rho_0|\diver u_0|^2\,dx
+
\int_\Omega
\frac{1}{\gamma p_0}|\nabla p_0|^2\,dx
\right)
+
{C_3}[1+P(Q(t))]
\int_0^t
\mathcal E_m(\tau)\,d\tau,
\end{aligned}
\end{equation}
{Here \(P(\cdot)\) denotes a polynomial function, and
\(C_3\) is a constant of the type introduced above; the subscript indicates
that the constant may depend on the geometric coefficients up to order \(3\),
but is independent of \(\varepsilon\) and \(t\).}
\end{lemma}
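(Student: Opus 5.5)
This is the standard compressible-NS estimate for \(\diver u\) and \(\nabla p\), as in \cite{WangXinYong2015}, with the electric force added. I would test the momentum equation in the form \eqref{eq:momentum_curl_form} against \(-\nabla\diver u\), and couple it with the pressure equation. Since \(p=\rho^\gamma\), the continuity equation gives
\[
p_t+u\cdot\nabla p+\gamma p\,\diver u=0 .
\]
Dividing \eqref{eq:momentum_curl_form} by \(\rho\) gives
\[
u_t+u\cdot\nabla u+\tfrac1\rho\nabla p=-\tfrac{\mu\varepsilon}{\rho}\nabla\times\omega+\tfrac{(2\mu+\lambda)\varepsilon}{\rho}\nabla\diver u+\nabla\phi .
\]
I would apply \(\diver\) to this and multiply by \(\rho\,\diver u\). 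Alternatively, pair the \(\nabla p\) term with \(\frac1{\gamma p}\nabla p\) obtained by taking \(\nabla\) of the pressure equation. The convenient combination is to test the \(u\)-equation with \(\nabla p/(\gamma p)\)-type and \(\nabla\diver u\)-type multipliers, so that the cross terms \(\int\nabla p\cdot\nabla\diver u\,dx\) cancel up to lower order. Concretely, I would compute
\[
\frac{d}{dt}\frac12\int\Big(\rho|\diver u|^2+\frac{1}{\gamma p}|\nabla p|^2\Big)dx .
\]
In the first piece, \(\diver u\) is paired with \(\diver\) of the momentum equation, integrated by parts with \(u\cdot n=0\). In the second piece, \(\nabla p\) is paired with \(\nabla\) of the pressure equation. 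The leading terms \(-\int\nabla p\cdot\nabla\diver u\) and \(+\int\nabla p\cdot\nabla\diver u\) cancel. The viscous term \((2\mu+\lambda)\varepsilon\int\nabla\diver u\cdot\nabla\diver u\) yields the dissipation \(c\varepsilon\|\nabla\diver u\|^2\).

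The remaining terms are commutators. These include \(\nabla u:\nabla u\), derivatives of \(\rho\) and \(1/p\), and \(\nabla u\,\nabla p\). Each is bounded by \(C[1+P(Q)]\mathcal E_m\), using the \(L^\infty\) quantities in \(Q(t)\) and the bound \(c_0\le\rho\le1/c_0\). The curl term \(\varepsilon\int\frac1\rho(\nabla\times\omega)\cdot\nabla\diver u\) behaves differently. Written as \(\varepsilon\int\nabla\times\omega\cdot\nabla\diver u\) after moving \(1/\rho\) onto lower order, it vanishes in the interior, since \(\diver(\nabla\times\omega)=0\). It leaves a boundary term \(\varepsilon\int_{\partial\Omega}(n\times\omega)\cdot\nabla\diver u\,dS\), modulo \(\nabla(1/\rho)\) commutators. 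By \eqref{eq:intro_Navier_boundary} this becomes \(\varepsilon\int_{\partial\Omega}[Bu]_\tau\cdot\Pi\nabla\diver u\,dS\). That involves only tangential derivatives of \(\diver u\) on the boundary, so I would integrate by parts along \(\partial\Omega\). The term then reduces to \(\varepsilon|\nabla u|_{L^2(\partial\Omega)}|\diver u|_{L^2(\partial\Omega)}\). By Proposition~\ref{prop:anisotropic_trace} this is bounded by \(\varepsilon\|\nabla u\|_{H^1_{co}}\)-type quantities, which are controlled by \(\mathcal E_m\) and a small multiple of \(\varepsilon\|\nabla\diver u\|^2\). This boundary term is the main obstacle, and the constant \(C_3\) comes from the boundary geometry in it.

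The new electric term, after taking divergence, is \(\int\rho\,\diver u\,\Delta\phi=\int\rho(\rho-1)\diver u\,dx\). This is directly bounded by \(C\mathcal E_m\). Any version with \(\nabla\phi\cdot\nabla p/(\gamma p)\) is also harmless, bounded by \(\|\nabla\phi\|_{L^2}\|\nabla p\|_{L^2}\). Note that the Neumann condition \(\partial_n\phi=0\) kills the boundary term \(\int_{\partial\Omega}\rho\partial_n\phi\,\diver u\,dS\) arising when integrating \(\rho\nabla\phi\cdot\nabla\diver u\) by parts, as flagged in Remark~\ref{rmk:insulated_bc}. Collecting terms, absorbing the small dissipation, and integrating in time gives \eqref{eq:div_gradrho_gradphi_estimate}.
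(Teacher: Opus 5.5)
Your proposal is essentially the paper's proof: test the curl-form momentum equation against $\nabla\diver u$, convert $\int_\Omega\nabla p\cdot\nabla\diver u\,dx$ into $-\frac{d}{dt}\int_\Omega\frac{1}{2\gamma p}|\nabla p|^2\,dx$ plus lower order through $p_t+u\cdot\nabla p+\gamma p\diver u=0$, reduce the curl term to $\varepsilon\int_{\partial\Omega}(Bu)\cdot\Pi(\nabla\diver u)\,dS$ and integrate by parts along $\partial\Omega$ with trace estimates, and use $\partial_n\phi=0$ and $\Delta\phi=\rho-1$ for the electric force. There are two loose ends: dividing by $\rho$ is unnecessary and leaves an interior remainder of the form $\varepsilon\int_\Omega\diver u\,\nabla\ln\rho\cdot(\nabla\times\omega)\,dx$, which contains $\varepsilon\partial_n^2u$ and is not bounded by $Q(t)$ and $\mathcal E_m$ alone unless you replace $\varepsilon\nabla\times\omega$ using the momentum equation (absorbing the resulting $\varepsilon\|\nabla\diver u\|_{L^2}$ into the dissipation); and without the division the inertial term produces the boundary integral $\int_{\partial\Omega}\rho(u\cdot\nabla)u\cdot n\,\diver u\,dS$, which you did not address and which the paper controls via $(u\cdot\nabla)u\cdot n=-u\cdot((u\cdot\nabla)n)$.
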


\begin{proof}
Multiplying \eqref{eq:momentum_curl_form} by \(\nabla\diver u\) and
integrating over \(\Omega\times(0,t)\), we obtain
\begin{equation}\label{eq:divu_gradp_basic_identity}
\begin{aligned}
&\int_0^t\int_\Omega
(\rho u_t+\rho u\cdot\nabla u)\cdot\nabla\diver u\,dx\,d\tau
+
\int_0^t\int_\Omega
\nabla p\cdot\nabla\diver u\,dx\,d\tau  \\
={}&
-\mu\varepsilon
\int_0^t\int_\Omega
(\nabla\times\omega)\cdot\nabla\diver u\,dx\,d\tau
+
(2\mu+\lambda)\varepsilon
\int_0^t
\|\nabla\diver u\|_{L^2}^2\,d\tau  \\
&+
\int_0^t\int_\Omega
\rho\nabla\phi\cdot\nabla\diver u\,dx\,d\tau .
\end{aligned}
\end{equation}

We first estimate the electric force term. Using \(\partial_n\phi=0\) and
\(\Delta\phi=\rho-1\), integration by parts gives
\begin{equation}\label{eq:electric_low_final}
\begin{aligned}
|\mathcal I_E|
&:=
\left|
\int_0^t\int_\Omega
\rho\nabla\phi\cdot\nabla\diver u\,dx\,d\tau
\right|
=
\left|
-\int_0^t\int_\Omega
\diver(\rho\nabla\phi)\,\diver u\,dx\,d\tau
\right|  \\
&=
\left|
-\int_0^t\int_\Omega
\left(
\nabla\rho\cdot\nabla\phi+\rho^2-\rho
\right)\diver u\,dx\,d\tau
\right|
\leq
C[1+P(Q(t))]
\int_0^t
\mathcal E_m(\tau)\,d\tau .
\end{aligned}
\end{equation}

Using integration by parts, the identity
\((u\cdot\nabla)u\cdot n=-u\cdot((u\cdot\nabla)n)\) on \(\partial\Omega\),
and Proposition~\ref{prop:anisotropic_trace}, we obtain
\begin{equation}\label{eq:divu_inertial_estimate}
\begin{aligned}
&\int_0^t\int_\Omega
(\rho u_t+\rho u\cdot\nabla u)\cdot\nabla\diver u\,dx\,d\tau  \\
={}&
-\frac12\int_\Omega \rho|\diver u|^2(t)\,dx
+
\frac12\int_\Omega \rho_0|\diver u_0|^2\,dx\\
&
-
\int_0^t\int_\Omega
\left(
\nabla\rho\cdot u_t
+
\nabla(\rho u)^T:\nabla u
\right)\diver u\,dx\,d\tau  \\
&+
\int_0^t\int_{\partial\Omega}
\rho(u\cdot\nabla)u\cdot n\,\diver u\,dS\,d\tau  \\
\leq{}&
-\frac12\int_\Omega \rho|\diver u|^2(t)\,dx
+
\frac12\int_\Omega \rho_0|\diver u_0|^2\,dx
+
C_2[1+P(Q(t))]
\int_0^t
\mathcal E_m(\tau)\,d\tau .
\end{aligned}
\end{equation}

Since \(p=\rho^\gamma\), we have
\begin{equation}\label{eq:pressure_div_relation}
p_t+u\cdot\nabla p+\gamma p\diver u=0,
\qquad
\diver u=-\frac{p_t+u\cdot\nabla p}{\gamma p}.
\end{equation}
Using \eqref{eq:pressure_div_relation} and integrating by parts, we obtain
\begin{equation}\label{eq:gradp_low}
\begin{aligned}
&\int_0^t\int_\Omega
\nabla p\cdot\nabla\diver u\,dx\,d\tau  \\
&=
-\int_0^t\int_\Omega
\nabla p\cdot\nabla\left(\frac{p_t}{\gamma p}\right)\,dx\,d\tau
-
\int_0^t\int_\Omega
\nabla p\cdot\nabla\left(\frac{u\cdot\nabla p}{\gamma p}\right)\,dx\,d\tau  \\
&\leq
-\int_\Omega
\frac{1}{2\gamma p}|\nabla p|^2(t)\,dx
+
\int_\Omega
\frac{1}{2\gamma p_0}|\nabla p_0|^2\,dx
+
C[1+P(Q(t))]
\int_0^t
\mathcal E_m(\tau)\,d\tau .
\end{aligned}
\end{equation}

For the vorticity term, using
\(\nabla\times\nabla\diver u=0\), \eqref{eq:intro_Navier_boundary},
integration by parts along the boundary, Proposition~\ref{prop:anisotropic_trace},
and Young's inequality, we obtain
\begin{equation}\label{eq:vorticity_low}
\begin{aligned}
&\varepsilon
\left|
\int_0^t\int_\Omega
(\nabla\times\omega)\cdot\nabla\diver u\,dx\,d\tau
\right|
=
\varepsilon
\left|
\int_0^t\int_{\partial\Omega}
(Bu)\cdot \Pi(\nabla\diver u)\,dS\,d\tau
\right|  \\
&=
\varepsilon
\left|
\int_0^t\int_{\partial\Omega}
(Bu)\cdot Z_y\diver u\,dS\,d\tau
\right|
\leq
C_2\varepsilon
\int_0^t
|u|_{H^{\frac12}(\partial\Omega)}
|\diver u|_{H^{\frac12}(\partial\Omega)}\,d\tau  \\
&\leq
\frac{2\mu+\lambda}{4}
\varepsilon
\int_0^t
\|\nabla\diver u(\tau)\|_{L^2}^2\,d\tau
+
C_3\varepsilon
\int_0^t
\left(
\|\nabla u(\tau)\|_{L^2}^2
+
\|u(\tau)\|_{L^2}^2
\right)\,d\tau .
\end{aligned}
\end{equation}

Substituting \eqref{eq:electric_low_final},
\eqref{eq:divu_inertial_estimate}, \eqref{eq:gradp_low}, and
\eqref{eq:vorticity_low} into \eqref{eq:divu_gradp_basic_identity}, we obtain
\eqref{eq:div_gradrho_gradphi_estimate}. This completes the proof.
\end{proof}

\begin{lemma}[Higher-order conormal estimate]\label{lem:high_order_conormal}
Let \(m\geq1\). For any sufficiently small \(\delta>0\), it holds that
\begin{equation}\label{eq:high_order_conormal_estimate}
\begin{aligned}
&\sup_{0\leq \tau\leq t}
\left(
\|(p,u)(\tau)\|_{\mathcal H^m}^2
+
\|\nabla\phi(\tau)\|_{\mathcal H^m}^2
\right)
+
c\varepsilon
\int_0^t
\|\nabla u(\tau)\|_{\mathcal H^m}^2\,d\tau
\\
&\leq
C_{m+2}
\Bigg\{
\mathcal E_m(0)
+
\delta\varepsilon^2
\int_0^t
\|\nabla^2u(\tau)\|_{\mathcal H^{m-1}}^2\,d\tau
+
\delta
\int_0^t
\|\nabla\partial_t^{m-1}p(\tau)\|_{L^2}^2\,d\tau
\\
&\qquad\qquad
+
C_\delta[1+P(Q(t))]
\int_0^t
\mathcal E_m(\tau)\,d\tau
\Bigg\}.
\end{aligned}
\end{equation}
Here \(C_\delta\) denotes a positive constant depending also on the small
parameter \(\delta\), but independent of \(\varepsilon\) and \(t\).
\end{lemma}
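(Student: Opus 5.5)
We apply $Z^\alpha$ with $|\alpha|\le m$ (space-time conormal fields, $Z_0=\partial_t$) to the pressure form of the continuity equation, $p_t+u\cdot\nabla p+\gamma p\diver u=0$, and to \eqref{eq:momentum_curl_form}. The resulting equations are tested against $\frac{1}{\gamma p}Z^\alpha p$ and $Z^\alpha u$ respectively, following the scheme of Wang--Xin--Yong \cite{WangXinYong2015}. The symmetrizer $\frac1{\gamma p}$ cancels the principal pressure coupling: $\int Z^\alpha\nabla p\cdot Z^\alpha u+\int Z^\alpha \diver u\, Z^\alpha p=0$ up to commutators, since $Z^\alpha u\cdot n=0$ on $\partial\Omega$ ($n$ is independent of $z$ and the $Z_i$ are tangent). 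The transport parts give $\frac12\frac{d}{dt}\int(\rho|Z^\alpha u|^2+\frac{1}{\gamma p}|Z^\alpha p|^2)$ plus terms of size $[1+P(Q)]\mathcal E_m$. The commutators $[Z^\alpha,\rho\partial_t]$, $[Z^\alpha,\rho u\cdot\nabla]$, $[Z^\alpha,\nabla]$ are handled by Proposition~\ref{prop:conormal_product}. The key point is that $u\cdot\nabla=u^1Z_1+u^2Z_2+\frac{u^3}{\varphi}Z_3$ with $u^3/\varphi$ bounded by $\|\nabla u\|_{L^\infty}$, so these commutators are controlled by $[1+P(Q)]\mathcal E_m$. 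The exception is the commutator of $Z^\alpha$ with $\nabla p$, which for $\alpha_0=m-1$ produces $\nabla\partial_t^{m-1}p$; this is where the $\delta\int\|\nabla\partial_t^{m-1}p\|^2$ term comes from.

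For the viscous terms we argue as in Lemma~\ref{lem:basic_energy}. Integrating by parts gives $\mu\varepsilon\|Z^\alpha\omega\|^2+(2\mu+\lambda)\varepsilon\|Z^\alpha\diver u\|^2$ plus boundary terms. The boundary vorticity $n\times Z^\alpha\omega$ is replaced by means of \eqref{eq:intro_Navier_boundary}, with commutator terms $Z^\alpha(n\times\omega)-n\times Z^\alpha\omega$ that involve only tangential derivatives on the boundary. These are bounded via the trace estimate of Proposition~\ref{prop:anisotropic_trace} by $\frac{c}{4}\varepsilon\|\nabla Z^\alpha u\|^2+C\varepsilon\|u\|_{\mathcal H^m}^2$. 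The interior commutators $\varepsilon[Z^\alpha,\nabla\times]\omega$ and $\varepsilon[Z^\alpha,\nabla]\diver u$ contain $\partial_z$ acting on $\nabla u$. After Cauchy--Schwarz they are bounded by $\delta\varepsilon^2\|\nabla^2u\|_{\mathcal H^{m-1}}^2+C_\delta\|Z^\alpha u\|^2$, which is the origin of the $\delta\varepsilon^2$ term. Proposition~\ref{prop:div_curl}, applied to $Z^\alpha u$ (for which $Z^\alpha u\cdot n$ is a lower-order boundary term), then upgrades the dissipation to $c\varepsilon\|\nabla Z^\alpha u\|^2$.

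The electric term needs separate care. We write $Z^\alpha(\rho\nabla\phi)=\rho\nabla Z^\alpha\phi+\text{commutators}$. The commutator part $\sum Z^\beta\rho\, Z^\gamma\nabla\phi$ with $|\gamma|<|\alpha|$, together with $[Z^\alpha,\nabla]\phi$, is controlled through Proposition~\ref{prop:conormal_product} using $\|\nabla\phi\|_{\mathcal H^{1,\infty}}\le Q$. The terms involving $\nabla^2\phi$ are bounded by Proposition~\ref{prop:poisson_neumann_estimate} via $\|\rho-1\|_{\mathcal H^{m-1}}+\|\nabla\phi\|_{L^2}\lesssim\mathcal E_m^{1/2}$. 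For the principal part, we mimic \eqref{eq:basic_electric_part}. Since $Z^\alpha u\cdot n=0$,
\[
\int\rho\nabla Z^\alpha\phi\cdot Z^\alpha u=-\int Z^\alpha\phi\,\diver(\rho Z^\alpha u).
\]
Moreover $\diver(\rho Z^\alpha u)=Z^\alpha\diver(\rho u)+\text{commutators}=-\Delta Z^\alpha\phi_t+\ldots$, which yields $-\frac12\frac{d}{dt}\|\nabla Z^\alpha\phi\|^2$ up to the boundary term $\int_{\partial\Omega}Z^\alpha\phi\,\partial_nZ^\alpha\phi_t$ and commutators $[Z^\alpha,\Delta]\phi$. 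The boundary term is handled using $\partial_n\phi=0$, since $\partial_n Z^\alpha\phi=[\partial_n,Z^\alpha]\phi$ involves tangential derivatives of $\nabla\phi$ only. Alternatively, and more simply, we close $\|\nabla\phi\|_{\mathcal H^m}$ directly. By Proposition~\ref{prop:poisson_neumann_estimate} we have $\|\nabla\phi\|_{\mathcal H^m}\lesssim\|\nabla^2\phi\|_{\mathcal H^{m-1}}+\|\nabla\phi\|_{L^2}\lesssim\|\rho-1\|_{\mathcal H^{m-1}}+\|\nabla\phi\|_{L^2}$. Since $\rho=p^{1/\gamma}$ and $\|\nabla\phi\|_{L^2}$ is controlled by Lemma~\ref{lem:basic_energy}, this bounds $\sup\|\nabla\phi\|_{\mathcal H^m}^2$ by the $p$-energy, and we can simply treat $\int Z^\alpha(\rho\nabla\phi)\cdot Z^\alpha u$ as a source bounded by $C[1+P(Q)]\mathcal E_m$. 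We plan to use this second route, which avoids the electrostatic boundary terms. The case $\alpha_0=m$ requires $\partial_t^m\nabla\phi$; here we use $\Delta\partial_t^m\phi=\partial_t^m\rho$ and the $L^2$ Neumann estimate for $\nabla\partial_t^m\phi$ ($\partial_t^m\rho$ has zero mean).

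Summing over $|\alpha|\le m$, integrating in time and applying Gronwall-free absorption yields \eqref{eq:high_order_conormal_estimate}. We expect the main obstacle to be the viscous boundary/commutator terms when $Z_3$ is involved, together with the pressure commutator at the top time level: there one cannot avoid a genuine normal derivative and must pay with the $\delta$-weighted quantities, which are recovered later by the normal-derivative and pressure-equation estimates.
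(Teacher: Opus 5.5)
Your scheme for the \((p,u)\) part (the symmetrizer \(\frac{1}{\gamma p}\), viscous terms as in Wang--Xin--Yong) is the paper's. However, the pressure step rests on a false claim. On a curved boundary, \(Z^\alpha u\cdot n\) vanishes only if \(Z^\alpha\) is purely temporal, or contains \(Z_3\) (in which case \(Z^\alpha u\) itself vanishes on \(\partial\Omega\)). For a tangential field, differentiating \(u\cdot n=0\) along \(\partial\Omega\) gives \(Z_iu\cdot n=-u\cdot Z_in\), which is nonzero because \(n\) depends on \(y\). Your own viscous paragraph treats \(Z^\alpha u\cdot n\) as a nonzero lower-order term, and your abandoned first electric route uses the same false identity where the paper keeps \(B_1^\alpha\).

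Consequently, the integration by parts in the pressure coupling leaves
\[
\int_{\partial\Omega}Z^\alpha p\,(Z^\alpha u\cdot n)\,dS ,
\]
where \(Z^\alpha u\cdot n\) is a sum of terms \(Z^\beta n\cdot Z^{\alpha-\beta}u\) with \(\beta\neq0\). This is not an interior commutator. The \(u\)-factor is of order \(m-1\), but the trace of the top-order factor \(Z^\alpha p\) is not controlled by \(\|p\|_{\mathcal H^m}\). You have to move one tangential derivative off \(Z^\alpha p\) by integrating by parts along \(\partial\Omega\), and then apply the trace inequality of Proposition~\ref{prop:anisotropic_trace}. This costs \(\|\nabla Z^{m-1}p\|_{L^2}\). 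For \(Z^\alpha=\partial_t^{m-1}Z_i\) that is \(\|\nabla\partial_t^{m-1}p\|_{L^2}\), which is not in \(\mathcal E_m\) without the weight \(\varepsilon\). This boundary term, together with the interior commutator \([Z^\alpha,\nabla]p\) that you do identify, is where the \(\delta\int_0^t\|\nabla\partial_t^{m-1}p\|_{L^2}^2\,d\tau\) term comes from; the paper bounds both by \(\delta\|\nabla p\|_{\mathcal H^{m-1}}^2\) plus controlled terms.

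Treating the electric force as a source is a genuinely different and simpler route than the paper's electrostatic identity, which produces \(-\frac12\frac{d}{dt}\|\nabla Z^\alpha\phi\|_{L^2}^2\) together with the boundary terms \(B_1^\alpha,B_2^\alpha\). It is sound, since \(\|\nabla\phi\|_{\mathcal H^m}^2\leq\mathcal E_m\). The problem is how you recover \(\sup_\tau\|\nabla\phi(\tau)\|_{\mathcal H^m}^2\). A pointwise-in-time elliptic bound combined with the chain rule \(\rho=p^{1/\gamma}\) costs factors of \(\|Zp\|_{L^\infty}\) for \(|\alpha|\geq2\). You then get \(1+P(Q(t))\) multiplying \(\mathcal E_m(0)\) and the \(\delta\)-terms. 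That is not the stated inequality: there \(Q\) may only multiply \(\int_0^t\mathcal E_m\,d\tau\), and the later continuity argument relies on that structure. Instead, use the fundamental theorem of calculus in time:
\[
\|\nabla\phi(t)\|_{\mathcal H^m}^2
\leq
\|\nabla\phi_0\|_{\mathcal H^m}^2
+\int_0^t\big(\|\nabla\phi\|_{\mathcal H^m}^2+\|\nabla\phi_t\|_{\mathcal H^m}^2\big)\,d\tau .
\]
Bound \(\|\nabla\phi_t\|_{\mathcal H^m}\) in two pieces. For the mixed derivatives, apply Proposition~\ref{prop:poisson_neumann_estimate} to \(\Delta\phi_t=\rho_t\), which is \eqref{eq:phit_elliptic_control}. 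For the top pure-time derivative, use \(\|\nabla\partial_t^{m+1}\phi\|_{L^2}\leq\|\partial_t^m(\rho u)\|_{L^2}\), which follows from the weak Neumann formulation since \(\rho u\cdot n=0\). Both pieces are bounded by \([1+P(Q)]\,\mathcal E_m^{1/2}\). With this change your electric part closes in the required form, and no electrostatic boundary terms arise.
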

\begin{proof}
The case \(m=0\) has been proved in Lemma~\ref{lem:basic_energy}. Assume that
the estimate has been obtained for all orders not larger than
\(m-1\). We prove it for \(|\alpha|=m\).

Applying \(Z^\alpha\), \(|\alpha|=m\), to
\eqref{eq:momentum_curl_form}, multiplying the resulting equation by
\(Z^\alpha u\), and integrating over \(\Omega\), we obtain
\begin{equation}\label{eq:high_momentum_energy_identity}
\begin{aligned}
&\frac12\frac{d}{dt}
\int_\Omega
\rho|Z^\alpha u|^2\,dx
+
\int_\Omega
Z^\alpha\nabla p\cdot Z^\alpha u\,dx
\\
={}&
-\mu\varepsilon
\int_\Omega
Z^\alpha\nabla\times\omega\cdot Z^\alpha u\,dx
+
(2\mu+\lambda)\varepsilon
\int_\Omega
Z^\alpha\nabla\diver u\cdot Z^\alpha u\,dx
\\
&+
\int_\Omega
\rho\nabla Z^\alpha\phi\cdot Z^\alpha u\,dx
+
\int_\Omega
\left(
C_1^\alpha+C_2^\alpha+C_3^\alpha
\right)\cdot Z^\alpha u\,dx ,
\end{aligned}
\end{equation}
where
\begin{equation}\label{eq:high_momentum_commutators}
\begin{aligned}
C_1^\alpha
&:=
-[Z^\alpha,\rho]u_t
=
-\sum_{\substack{|\beta|\geq1\\ \beta+\nu=\alpha}}
C_{\alpha\beta}Z^\beta\rho\,Z^\nu u_t,\\
C_2^\alpha
&:=
-[Z^\alpha,\rho u\cdot\nabla]u
=
-\sum_{\substack{|\beta|\geq1\\ \beta+\nu=\alpha}}
C_{\alpha\beta}Z^\beta(\rho u)\,Z^\nu\nabla u
-
\rho u\cdot[Z^\alpha,\nabla]u,\\
C_3^\alpha
&:=
[Z^\alpha,\rho]\nabla\phi
+
\rho[Z^\alpha,\nabla]\phi .
\end{aligned}
\end{equation}
Here \(C_1^\alpha\), \(C_2^\alpha\) and \(C_3^\alpha\) denote the commutator
terms generated by applying \(Z^\alpha\) to the momentum equation, and
\(C_{\alpha\beta}\) denotes the corresponding combinatorial coefficient.
We first estimate the electric force term. Set
\[
\mathcal J^\alpha
:=
\int_\Omega
\rho\nabla Z^\alpha\phi\cdot Z^\alpha u\,dx .
\]
By integration by parts, the continuity equation, and the Poisson equation, we
obtain
\begin{equation}\label{eq:J_alpha_decomposition}
\begin{aligned}
\mathcal J^\alpha
&=
-\int_\Omega
Z^\alpha\phi\,\diver(\rho Z^\alpha u)\,dx
+
\int_{\partial\Omega}
\rho Z^\alpha\phi\,Z^\alpha u\cdot n\,dS  \\
&=
-\frac12\frac{d}{dt}
\|\nabla Z^\alpha\phi\|_{L^2}^2
+
\int_{\partial\Omega}
\rho Z^\alpha\phi\,Z^\alpha u\cdot n\,dS  \\
&\quad
+
\int_{\partial\Omega}
Z^\alpha\phi\,\partial_n Z^\alpha\phi_t\,dS
+
\int_\Omega
Z^\alpha\phi\,[Z^\alpha,\diver](\rho u)\,dx  \\
&\quad
-
\int_\Omega
\nabla Z^\alpha\phi\cdot[Z^\alpha,\rho]u\,dx
+
\int_{\partial\Omega}
Z^\alpha\phi\,[Z^\alpha,\rho]u\cdot n\,dS  \\
&\quad
+
\int_\Omega
Z^\alpha\phi\,[Z^\alpha,\Delta]\phi_t\,dx  \\
&\triangleq
-\frac12\frac{d}{dt}
\|\nabla Z^\alpha\phi\|_{L^2}^2
+
B_1^\alpha
+
B_2^\alpha
+
R_{\diver}^\alpha
+
R_\rho^\alpha
+
R_\Delta^\alpha .
\end{aligned}
\end{equation}

Here
\[
\begin{aligned}
\diver(\rho Z^\alpha u)
&=
Z^\alpha\diver(\rho u)
-
[Z^\alpha,\diver](\rho u)
-
\diver([Z^\alpha,\rho]u).
\end{aligned}
\]

We estimate \(B_1^\alpha\). If \(Z^\alpha\) is purely temporal, then
\(Z^\alpha u\cdot n=0\) on \(\partial\Omega\). Otherwise, differentiating
\(u\cdot n=0\) tangentially on \(\partial\Omega\) gives
\[
|Z^\alpha u\cdot n|_{L^2(\partial\Omega)}
\leq
C_{m+1}|u|_{H^{m-1}(\partial\Omega)} .
\]
Moreover,
\[
|Z^\alpha\phi|_{L^2(\partial\Omega)}
\leq
C|\nabla\phi|_{H^{m-1}(\partial\Omega)} .
\]
Thus, by Proposition~\ref{prop:anisotropic_trace},
\[
\begin{aligned}
|B_1^\alpha|
&\leq
C_{m+1}
|\nabla\phi|_{H^{m-1}(\partial\Omega)}
|u|_{H^{m-1}(\partial\Omega)}  \\
&\leq
C_{m+2}
\left(
\|\nabla^2\phi\|_{\mathcal H^{m-1}}
+
\|\nabla\phi\|_{\mathcal H^{m-1}}
\right)^{1/2}
\|\nabla\phi\|_{\mathcal H^{m-1}}^{1/2}
\left(
\|\nabla u\|_{\mathcal H^{m-1}}
+
\|u\|_{\mathcal H^m}
\right).
\end{aligned}
\]
By Proposition~\ref{prop:poisson_neumann_estimate},
\begin{equation}\label{eq:phi_elliptic_control}
\|\nabla^2\phi\|_{\mathcal H^{m-1}}
\leq
C_{m+2}
\left(
\|\rho-1\|_{\mathcal H^{m-1}}
+
\|\nabla\phi\|_{L^2}
\right).
\end{equation}
Therefore,
\begin{equation}\label{eq:B1_alpha_estimate}
\sum_{|\alpha|=m}
\int_0^t
|B_1^\alpha(\tau)|\,d\tau
\leq
C_{m+2}[1+P(Q(t))]
\int_0^t
\mathcal E_m(\tau)\,d\tau .
\end{equation}

The term \(B_2^\alpha\) is treated similarly. If \(Z^\alpha\) is purely
temporal, then \(\partial_nZ^\alpha\phi_t=0\) on \(\partial\Omega\). Otherwise,
\[
\partial_nZ^\alpha\phi_t
=
[\partial_n,Z^\alpha]\phi_t
\quad\text{on }\partial\Omega .
\]
Applying Proposition~\ref{prop:anisotropic_trace} as above, and using
Proposition~\ref{prop:poisson_neumann_estimate} for the time-differentiated
Poisson equation, we get
\begin{equation}\label{eq:phit_elliptic_control}
\|\nabla^2\phi_t\|_{\mathcal H^{m-1}}
\leq
C_{m+2}
\left(
\|\rho_t\|_{\mathcal H^{m-1}}
+
\|\nabla\phi_t\|_{L^2}
\right).
\end{equation}
Consequently, as in \eqref{eq:B1_alpha_estimate},
\begin{equation}\label{eq:B2_alpha_estimate}
\sum_{|\alpha|=m}
\int_0^t
|B_2^\alpha(\tau)|\,d\tau
\leq
C_{m+2}[1+P(Q(t))]
\int_0^t
\mathcal E_m(\tau)\,d\tau .
\end{equation}

For \(R_{\diver}^\alpha\), the
commutator \([Z^\alpha,\diver]\) contains only differentiated coefficients of
the conormal vector fields and lower-order derivatives of \(\rho u\). Moreover,
it vanishes when \(Z^\alpha\) is purely temporal. Hence, by
Proposition~\ref{prop:conormal_product},
\begin{equation}\label{eq:Rdiv_alpha_estimate}
\begin{aligned}
\sum_{|\alpha|=m}
\int_0^t
|R_{\diver}^\alpha(\tau)|\,d\tau
&\leq
\sum_{|\alpha|=m}
\int_0^t
\|Z^\alpha\phi\|_{L^2}
\|[Z^\alpha,\diver](\rho u)\|_{L^2}\,d\tau  \\
&\leq
C_{m+1}[1+P(Q(t))]
\int_0^t
\mathcal E_m(\tau)\,d\tau .
\end{aligned}
\end{equation}

For \(R_\rho^\alpha\), the integration by parts in
\eqref{eq:J_alpha_decomposition} gives
\[
R_\rho^\alpha
=
-\int_\Omega
\nabla Z^\alpha\phi\cdot [Z^\alpha,\rho]u\,dx
+
\int_{\partial\Omega}
Z^\alpha\phi\,[Z^\alpha,\rho]u\cdot n\,dS .
\]
Proposition~\ref{prop:conormal_product} and
Proposition~\ref{prop:anisotropic_trace} imply
\begin{equation}\label{eq:Rrho_alpha_estimate}
\begin{aligned}
\sum_{|\alpha|=m}
\int_0^t
|R_\rho^\alpha(\tau)|\,d\tau
&\leq
\sum_{|\alpha|=m}
\int_0^t
\|\nabla Z^\alpha\phi\|_{L^2}
\|[Z^\alpha,\rho]u\|_{L^2}\,d\tau  \\
&\quad+
\sum_{|\alpha|=m}
\int_0^t
|Z^\alpha\phi|_{L^2(\partial\Omega)}
|[Z^\alpha,\rho]u\cdot n|_{L^2(\partial\Omega)}\,d\tau  \\
&\leq
C_{m+2}[1+P(Q(t))]
\int_0^t
\mathcal E_m(\tau)\,d\tau .
\end{aligned}
\end{equation}

Finally, for \(R_\Delta^\alpha\), the commutator
\([Z^\alpha,\Delta]\phi_t\) satisfies
\[
\|[Z^\alpha,\Delta]\phi_t\|_{L^2}
\leq
C_{m+2}
\left(
\|\nabla^2\phi_t\|_{\mathcal H^{m-1}}
+
\|\nabla\phi_t\|_{\mathcal H^{m-1}}
\right).
\]
Thus, by Proposition~\ref{prop:conormal_product},
\eqref{eq:phit_elliptic_control},
\begin{equation}\label{eq:Rdelta_alpha_estimate}
\begin{aligned}
\sum_{|\alpha|=m}
\int_0^t
|R_\Delta^\alpha(\tau)|\,d\tau
&\leq
\sum_{|\alpha|=m}
\int_0^t
\|Z^\alpha\phi\|_{L^2}
\|[Z^\alpha,\Delta]\phi_t\|_{L^2}\,d\tau  \\
&\leq
C_{m+2}[1+P(Q(t))]
\int_0^t
\mathcal E_m(\tau)\,d\tau .
\end{aligned}
\end{equation}

Combining \eqref{eq:J_alpha_decomposition},
\eqref{eq:B1_alpha_estimate}, \eqref{eq:B2_alpha_estimate},
\eqref{eq:Rdiv_alpha_estimate}, \eqref{eq:Rrho_alpha_estimate}, and
\eqref{eq:Rdelta_alpha_estimate}, we obtain
\begin{equation}\label{eq:electric_principal_final}
\begin{aligned}
\sum_{|\alpha|=m}
\int_0^t
\mathcal J^\alpha(\tau)\,d\tau
\leq{}&
-\frac12
\|\nabla Z^m\phi(t)\|_{L^2}^2
+
\frac12
\|\nabla Z^m\phi_0\|_{L^2}^2   \\
&\quad
+
C_{m+2}[1+P(Q(t))]
\int_0^t
\mathcal E_m(\tau)\,d\tau .
\end{aligned}
\end{equation}

The viscous terms are considered as in Wang, Xin and Yong \cite{WangXinYong2015},
Lemma 3.4. More precisely, after integration by parts, using
\eqref{eq:intro_Navier_boundary}, Proposition~\ref{prop:div_curl},
Proposition~\ref{prop:anisotropic_trace}, and Young's inequality, one obtains
\begin{equation}\label{eq:high_viscous_final}
\begin{aligned}
&-\mu\varepsilon
\int_\Omega
Z^\alpha\nabla\times\omega\cdot Z^\alpha u\,dx
+
(2\mu+\lambda)\varepsilon
\int_\Omega
Z^\alpha\nabla\diver u\cdot Z^\alpha u\,dx
\\
&\leq
-c\varepsilon\|\nabla Z^\alpha u\|_{L^2}^2
+
\delta\varepsilon
\|\nabla u\|_{\mathcal H^m}^2
+
\delta\varepsilon^2
\|\nabla^2u\|_{\mathcal H^{m-1}}^2
\\
&\quad
+
C_\delta C_{m+2}
\left(
\|\nabla u\|_{\mathcal H^{m-1}}^2
+
\|u\|_{\mathcal H^m}^2
\right).
\end{aligned}
\end{equation}

We next estimate the pressure term. Integration by parts gives
\begin{equation}\label{eq:high_pressure_integration}
\begin{aligned}
\int_\Omega
Z^\alpha\nabla p\cdot Z^\alpha u\,dx
={}&
-\int_\Omega
Z^\alpha p\,Z^\alpha\diver u\,dx
+
\int_{\partial\Omega}
Z^\alpha p\,Z^\alpha u\cdot n\,dS  \\
&+
\int_\Omega
[Z^\alpha,\nabla]p\cdot Z^\alpha u\,dx .
\end{aligned}
\end{equation}
For the last two terms, if \(Z^\alpha\) is purely temporal, then
\(Z^\alpha u\cdot n=0\) on \(\partial\Omega\). Otherwise, writing the boundary
conormal derivative as \(Z_y^{\alpha_1}\partial_t^{\alpha_0}\), integration by
parts along \(\partial\Omega\) yields
\[
\begin{aligned}
\left|
\int_{\partial\Omega}
Z^\alpha p\,Z^\alpha u\cdot n\,dS
\right|
&\leq
C_{m+2}
\left(
\|\nabla Z_y^{\alpha_1-1}\partial_t^{\alpha_0}p\|_{L^2}
+
\|Z_y^{\alpha_1-1}\partial_t^{\alpha_0}p\|_{\mathcal H^1}
\right)^{1/2}  \\
&\quad \times
\|Z_y^{\alpha_1-1}\partial_t^{\alpha_0}p\|_{\mathcal H^1}^{1/2}
\left(
\|\nabla u\|_{\mathcal H^{m-1}}
+
\|u\|_{\mathcal H^m}
\right).
\end{aligned}
\]
Hence, together with Proposition~\ref{prop:anisotropic_trace},
\begin{equation}\label{eq:high_pressure_boundary_commutator}
\begin{aligned}
&\left|
\int_{\partial\Omega}
Z^\alpha p\,Z^\alpha u\cdot n\,dS
+
\int_\Omega
[Z^\alpha,\nabla]p\cdot Z^\alpha u\,dx
\right| \\
&\leq
\delta
\|\nabla p\|_{\mathcal H^{m-1}}^2
+
C_\delta C_{m+2}
\left(
\|\nabla u\|_{\mathcal H^{m-1}}^2
+
\|(p,u)\|_{\mathcal H^m}^2
\right).
\end{aligned}
\end{equation}

It remains to estimate the first term on the right-hand side of
\eqref{eq:high_pressure_integration}. Applying \(Z^\alpha\) to
\eqref{eq:pressure_div_relation}, we obtain
\begin{equation}\label{eq:Zalpha_divu_pressure_relation}
\begin{aligned}
Z^\alpha\diver u
={}&
-\frac{1}{\gamma p}Z^\alpha p_t
-
\frac{u}{\gamma p}\cdot Z^\alpha\nabla p  \\
&-
\sum_{\substack{|\beta|\geq1\\ \beta+\nu=\alpha}}
C_{\alpha\beta}
Z^\beta\left(\frac{1}{\gamma p}\right)Z^\nu p_t
-
\sum_{\substack{|\beta|\geq1\\ \beta+\nu=\alpha}}
C_{\alpha\beta}
Z^\beta\left(\frac{u}{\gamma p}\right)\cdot Z^\nu\nabla p .
\end{aligned}
\end{equation}
Substituting \eqref{eq:Zalpha_divu_pressure_relation} gives
\begin{equation}\label{eq:pressure_main_substitution}
\begin{aligned}
-\int_0^t\int_\Omega
Z^\alpha p\,Z^\alpha\diver u\,dx\,d\tau
={}&
\int_0^t\int_\Omega
\frac{1}{\gamma p}Z^\alpha p\,Z^\alpha p_t\,dx\,d\tau  \\
&+
\int_0^t\int_\Omega
Z^\alpha p\,\frac{u}{\gamma p}\cdot Z^\alpha\nabla p\,dx\,d\tau
+
R_\alpha^p .
\end{aligned}
\end{equation}
For the first two terms,
\begin{equation}\label{eq:pressure_first_two_terms}
\begin{aligned}
&\int_0^t\int_\Omega
\frac{1}{\gamma p}Z^\alpha p\,Z^\alpha p_t\,dx\,d\tau
+
\int_0^t\int_\Omega
Z^\alpha p\,\frac{u}{\gamma p}\cdot Z^\alpha\nabla p\,dx\,d\tau \\
&\geq
\int_\Omega
\frac{1}{2\gamma p}|Z^\alpha p|^2(t)\,dx
-
\int_\Omega
\frac{1}{2\gamma p_0}|Z^\alpha p_0|^2\,dx
-
\delta
\int_0^t
\|\nabla p(\tau)\|_{\mathcal H^{m-1}}^2\,d\tau  \\
&\quad
-
C_\delta[1+P(Q(t))]
\int_0^t
\|p(\tau)\|_{\mathcal H^m}^2\,d\tau .
\end{aligned}
\end{equation}
The remaining terms in \(R_\alpha^p\) contain at least one differentiated
coefficient. By Proposition~\ref{prop:conormal_product},
\begin{equation}\label{eq:pressure_commutator_remainder}
|R_\alpha^p|
\leq
\delta
\int_0^t
\|\nabla p(\tau)\|_{\mathcal H^{m-1}}^2\,d\tau
+
C_\delta[1+P(Q(t))]
\int_0^t
\|(p,u)(\tau)\|_{\mathcal H^m}^2\,d\tau .
\end{equation}
Combining \eqref{eq:high_pressure_integration}--
\eqref{eq:pressure_commutator_remainder}, we arrive at
\begin{equation}\label{eq:high_pressure_final}
\begin{aligned}
&\int_0^t\int_\Omega
Z^\alpha\nabla p\cdot Z^\alpha u\,dx\,d\tau  \\
&\geq
\int_\Omega
\frac{1}{2\gamma p}|Z^\alpha p|^2(t)\,dx
-
\int_\Omega
\frac{1}{2\gamma p_0}|Z^\alpha p_0|^2\,dx
-
\delta
\int_0^t
\|\nabla p(\tau)\|_{\mathcal H^{m-1}}^2\,d\tau  \\
&\quad
-
C_\delta C_{m+2}[1+P(Q(t))]
\int_0^t
\mathcal E_m(\tau)\,d\tau .
\end{aligned}
\end{equation}

The commutators \(C_1^\alpha,C_2^\alpha\) and \(C_3^\alpha\) are estimated in
the same way. By Proposition~\ref{prop:conormal_product},
\begin{equation}\label{eq:C1_C2_C3_estimate}
\sum_{|\alpha|=m}
\int_0^t
\left|
\int_\Omega
(C_1^\alpha+C_2^\alpha+C_3^\alpha)\cdot Z^\alpha u\,dx
\right|d\tau
\leq
C[1+P(Q(t))]
\int_0^t
\mathcal E_m(\tau)\,d\tau .
\end{equation}

Substituting \eqref{eq:electric_principal_final},
\eqref{eq:high_viscous_final}, \eqref{eq:high_pressure_final}, and
\eqref{eq:C1_C2_C3_estimate} into
\eqref{eq:high_momentum_energy_identity}, summing over \(|\alpha|=m\), and
integrating over \([0,t]\), we obtain
\[
\begin{aligned}
&\sup_{0\leq \tau\leq t}
\left(
\|Z^m(p,u)(\tau)\|_{L^2}^2
+
\|\nabla Z^m\phi(\tau)\|_{L^2}^2
\right)
+
\varepsilon
\int_0^t
\|\nabla Z^m u(\tau)\|_{L^2}^2\,d\tau
\\
&\leq
C_{m+2}
\Bigg\{
\mathcal E_m(0)
+
\delta\varepsilon
\int_0^t
\|\nabla u(\tau)\|_{\mathcal H^m}^2\,d\tau
+
\delta\varepsilon^2
\int_0^t
\|\nabla^2u(\tau)\|_{\mathcal H^{m-1}}^2\,d\tau
\\
&\qquad
+
\delta
\int_0^t
\|\nabla p(\tau)\|_{\mathcal H^{m-1}}^2\,d\tau
+
C_\delta[1+P(Q(t))]
\int_0^t
\mathcal E_m(\tau)\,d\tau
\Bigg\}.
\end{aligned}
\]
Taking \(\delta>0\) sufficiently small and using the induction hypothesis for
lower conormal orders yield \eqref{eq:high_order_conormal_estimate}. This
completes the proof.
\end{proof}

\begin{lemma}[High-order estimate for \(\diver u\) and \(\nabla p\)]
\label{lem:divu_high_conormal}
Let \(m\geq1\), and let \(Z^\alpha\) be a conormal differential operator such
that \(|\alpha|=m-1\) and \(\alpha_0\leq m-2\). Then, for any sufficiently small
\(\delta>0\), it holds that
\begin{equation}\label{eq:divu_high_conormal_estimate}
\begin{aligned}
&\sup_{0\leq \tau\leq t}
\int_\Omega
\left(
\rho |Z^\alpha\diver u|^2
+
\frac{1}{\gamma p}|Z^\alpha\nabla p|^2
\right)(\tau)\,dx
+
c\varepsilon
\int_0^t
\|\nabla Z^\alpha\diver u(\tau)\|_{L^2}^2\,d\tau
\\
&\leq
C
\int_\Omega
\left(
\rho_0 |Z^\alpha\diver u_0|^2
+
\frac{1}{\gamma p_0}|Z^\alpha\nabla p_0|^2
\right)\,dx
\\
&\quad
+
C_{m+2}
\Bigg\{
\delta
\int_0^t
\|\nabla\partial_t^{m-1}p(\tau)\|_{L^2}^2\,d\tau
+
(\delta+\varepsilon)
\int_0^t
\|\nabla Z^{m-2}\diver u(\tau)\|_{L^2}^2\,d\tau
\\
&\qquad\qquad
+
\varepsilon
\int_0^t
\|\nabla^2u(\tau)\|_{\mathcal H^{m-2}}^2\,d\tau
+
C_\delta[1+P(Q(t))]
\int_0^t
\mathcal E_m(\tau)\,d\tau
\Bigg\}.
\end{aligned}
\end{equation}
\end{lemma}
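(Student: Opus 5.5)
The plan is to repeat, at the level $Z^\alpha$, the argument of Lemma~\ref{lem:div_gradrho_gradphi}. We apply $Z^\alpha$ (with $|\alpha|=m-1$, $\alpha_0\le m-2$) to \eqref{eq:momentum_curl_form}, take the $L^2$ inner product with $\nabla Z^\alpha\diver u$, and integrate over $(0,t)$. Every term then splits into a principal part, treated exactly as in the case $m=0$, and commutators.

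\textbf{Principal parts.}
\begin{itemize}
\item \emph{Inertial term.} In $\rho Z^\alpha u_t\cdot\nabla Z^\alpha\diver u$ we integrate by parts in space. We use $\diver Z^\alpha u=Z^\alpha\diver u+[\diver,Z^\alpha]u$ and then integrate in time. This produces $-\frac12\int\rho|Z^\alpha\diver u|^2(t)$, the initial term, and a boundary integral $\int_{\partial\Omega}\rho Z^\alpha u_t\cdot n\,Z^\alpha\diver u$. The boundary integral involves only lower-order tangential derivatives of $u$, since $Z^\alpha u\cdot n$ is computed from $u\cdot n=0$. We control it with the trace estimate of Proposition~\ref{prop:anisotropic_trace}.
\item \emph{Pressure term.} We substitute \eqref{eq:Zalpha_divu_pressure_relation} with $Z^\alpha$ of order $m-1$. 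The leading piece $-\int\nabla Z^\alpha p\cdot\nabla(Z^\alpha p_t/\gamma p)$ gives $\frac12\int\frac{1}{\gamma p}|Z^\alpha\nabla p|^2(t)$ up to commutators $[\nabla,Z^\alpha]$. The transport piece $u\cdot Z^\alpha\nabla p$ is integrated by parts using $u\cdot n=0$.
\item \emph{Viscous terms.} The term $(2\mu+\lambda)\varepsilon\int Z^\alpha\nabla\diver u\cdot\nabla Z^\alpha\diver u$ gives the dissipation $c\varepsilon\|\nabla Z^\alpha\diver u\|^2$, up to a commutator bounded by $\varepsilon\|\nabla Z^{m-2}\diver u\|\,\|\nabla Z^\alpha\diver u\|$. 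The vorticity term $\varepsilon\int Z^\alpha\nabla\times\omega\cdot\nabla Z^\alpha\diver u$ is handled as in \eqref{eq:vorticity_low}. Since $\nabla\times\nabla=0$, only a boundary term survives, up to the commutator $[Z^\alpha,\nabla\times]\omega$, which is bounded by $\varepsilon\|\nabla^2u\|_{\mathcal H^{m-2}}$. That boundary term becomes $\varepsilon\int_{\partial\Omega}Z^\alpha(Bu)\cdot\Pi\nabla Z^\alpha\diver u$ after using \eqref{eq:intro_Navier_boundary}. We integrate it by parts tangentially and bound it by traces, absorbing into the dissipation via Young's inequality.
\item \emph{Electric term.} As in \eqref{eq:electric_low_final}, we integrate $\int Z^\alpha(\rho\nabla\phi)\cdot\nabla Z^\alpha\diver u$ by parts onto $Z^\alpha\diver u$, using $\partial_n\phi=0$ and the Poisson equation. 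The resulting terms, and the boundary terms from $[Z^\alpha,\partial_n]\phi$, are controlled by $\|\nabla\phi\|_{\mathcal H^m}$, $\|\nabla^2\phi\|_{\mathcal H^{m-1}}$ and $\|\rho-1\|_{\mathcal H^{m-1}}$. By Proposition~\ref{prop:poisson_neumann_estimate} all of these are bounded by $\mathcal E_m$.
\end{itemize}

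\textbf{Commutators.} These have the schematic form $Z^\beta f\,Z^\nu g$ with $|\beta|\ge1$ and are estimated by Proposition~\ref{prop:conormal_product}. Most are bounded by $[1+P(Q)]\mathcal E_m$. The exceptions are terms with $Z^{m-1}\nabla p$ and $Z^{m-2}\nabla\diver u$, which give the $\delta\|\nabla Z^{m-2}\diver u\|^2$ contribution.

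\textbf{Main obstacle.} The restriction $\alpha_0\le m-2$ is essential. Otherwise $Z^\alpha p_t$ and the terms $\partial_t^{m-1}\nabla p$ would not be in $\mathcal E_m$ without the weight $\varepsilon$, because the unweighted norm of $\nabla\partial_t^{m-1}p$ is missing. The hardest step is tracking that every occurrence of $\nabla\partial_t^{m-1}p$, which arises from $[\nabla,Z^\alpha]$ acting on $p_t$ or from $\nabla Z^\alpha p_t$ in cross terms, is placed with a factor $\delta$ via Young's inequality. That factor $\delta$ is what later permits absorption by the pure-time estimates.
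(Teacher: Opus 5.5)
There is a genuine gap in how you treat the boundary integrals that contain the trace of \(Z^\alpha\diver u\) and carry no factor \(\varepsilon\).

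\textbf{Where the argument fails.} Several terms produce an integral \(\int_{\partial\Omega} f\,Z^\alpha\diver u\,dS\) with \(f\) of order \(m-1\):
\begin{itemize}
\item the inertial term, \(f=\rho(Z^\alpha u_t+u\cdot\nabla Z^\alpha u)\cdot n\); you omit the transport part, which does not vanish on the boundary;
\item the electric term, \(f=\rho[\partial_n,Z^\alpha]\phi\) and \(f=n\cdot[Z^\alpha,\rho]\nabla\phi\); both are nonzero on a curved boundary.
\end{itemize}
Bounding \(f\) by \(\mathcal E_m\) is fine. But Proposition~\ref{prop:anisotropic_trace} applied to \(Z^\alpha\diver u\) with \(|\alpha|=m-1\) costs \(\|\nabla Z^\alpha\diver u\|_{L^2}^{1/2}\). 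This top-order quantity is controlled only by the dissipation \(c\varepsilon\int_0^t\|\nabla Z^\alpha\diver u\|_{L^2}^2\,d\tau\). Absorbing it by Young's inequality leaves a constant of size \(\varepsilon^{-1/3}\), so the bound is not uniform in \(\varepsilon\).

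The same problem affects the commutators \(D_1^\alpha,D_2^\alpha\) and \([Z^\alpha,\rho]\nabla\phi\). They are tested against the top-order \(\nabla Z^\alpha\diver u\), so Proposition~\ref{prop:conormal_product} with Cauchy--Schwarz is not enough. You must integrate them by parts onto \(Z^\alpha\diver u\), which produces more boundary terms of exactly this type.

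\textbf{The missing idea.} This is the actual role of the hypothesis \(\alpha_0\le m-2\): \(Z^\alpha\) then contains a spatial conormal field.
\begin{itemize}
\item If that field is \(Z_3\), the trace of \(Z^\alpha\diver u\) vanishes.
\item Otherwise write \(Z^\alpha\diver u=Tg\), with \(T\) tangential and \(g\) of conormal order \(m-2\). Integrate by parts along \(\partial\Omega\) (equivalently, pair in \(H^{1/2}\)--\(H^{-1/2}\)). Then only \(|g|_{L^2(\partial\Omega)}\) is needed, which costs \(\|\nabla Z^{m-2}\diver u\|_{L^2}\).
\end{itemize}
After Young's inequality, this is precisely the source of \(\delta\int_0^t\|\nabla Z^{m-2}\diver u\|_{L^2}^2\,d\tau\) on the right-hand side. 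You attribute that term to interior commutators instead.

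\textbf{Your stated reason for \(\alpha_0\le m-2\).} It is not the operative one. For \(Z^\alpha=\partial_t^{m-1}\), the quantity \(Z^\alpha p_t=\partial_t^mp\) does lie in \(\|p\|_{\mathcal H^m}\). Likewise, \(\partial_t^{m-1}\nabla p\) would be controlled by the left-hand side itself via Gronwall. What actually fails in the pure-time case is the boundary term
\[
\int_{\partial\Omega}\rho\,(u\cdot\nabla)n\cdot\partial_t^{m-1}u\,\partial_t^{m-1}\diver u\,dS,
\]
which has no tangential derivative to move. That is why Lemma~\ref{lem:pure_time_divu} carries an extra factor \(\varepsilon\).

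The \(\delta\int_0^t\|\nabla\partial_t^{m-1}p\|_{L^2}^2\,d\tau\) term you track does arise, from \(\partial_t[Z^\alpha,\nabla]p\) when \(\alpha_0=m-2\). That is bookkeeping, not the main obstacle.
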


\begin{proof}
We argue by induction on the order. The case \(|\alpha|=0\) follows
from Lemma~\ref{lem:div_gradrho_gradphi}. Assume that the desired estimate has
been proved for all orders not larger than \(m-2\). We prove it for
\(|\alpha|=m-1\), where \(\alpha_0\leq m-2\).

Applying \(Z^\alpha\) to \eqref{eq:momentum_curl_form}, multiplying the
resulting equation by \(\nabla Z^\alpha\diver u\), and integrating over
\(\Omega\times(0,t)\), we obtain
\begin{equation}\label{eq:divu_high_identity}
\begin{aligned}
&\int_0^t\!\!\int_\Omega
\left(
\rho Z^\alpha u_t
+
\rho u\cdot\nabla Z^\alpha u
+
Z^\alpha\nabla p
\right)
\cdot\nabla Z^\alpha\diver u\,dx\,d\tau
\\
={}&
-\mu\varepsilon
\int_0^t\!\!\int_\Omega
Z^\alpha\nabla\times\omega\cdot\nabla Z^\alpha\diver u\,dx\,d\tau
+
(2\mu+\lambda)\varepsilon
\int_0^t
\|\nabla Z^\alpha\diver u\|_{L^2}^2\,d\tau
\\
&+
\int_0^t\!\!\int_\Omega
Z^\alpha(\rho\nabla\phi)\cdot\nabla Z^\alpha\diver u\,dx\,d\tau
+
\int_0^t\!\!\int_\Omega
(D_1^\alpha+D_2^\alpha)\cdot\nabla Z^\alpha\diver u\,dx\,d\tau .
\end{aligned}
\end{equation}

where
\[
D_1^\alpha
:=
-[Z^\alpha,\rho]u_t,
\qquad
D_2^\alpha
:=
-[Z^\alpha,\rho u\cdot\nabla]u .
\]

We first estimate the electric force term. We write
\begin{equation}\label{eq:electric_divu_decomposition}
\begin{aligned}
\mathcal I_E^\alpha
&:=
\int_0^t\int_\Omega
Z^\alpha(\rho\nabla\phi)\cdot\nabla Z^\alpha\diver u\,dx\,d\tau  \\
&=
\int_0^t\int_\Omega
\rho\nabla Z^\alpha\phi\cdot\nabla Z^\alpha\diver u\,dx\,d\tau
+
\int_0^t\int_\Omega
[Z^\alpha,\rho]\nabla\phi\cdot\nabla Z^\alpha\diver u\,dx\,d\tau  \\
&\quad
+
\int_0^t\int_\Omega
\rho[Z^\alpha,\nabla]\phi\cdot\nabla Z^\alpha\diver u\,dx\,d\tau  \\
&\triangleq
I_1^\alpha+I_2^\alpha+I_3^\alpha .
\end{aligned}
\end{equation}

For \(I_1^\alpha\), integration by parts gives
\begin{equation}\label{eq:IE1_high_decomposition}
\begin{aligned}
I_1^\alpha
&=
-\int_0^t\int_\Omega
\diver(\rho\nabla Z^\alpha\phi)\,
Z^\alpha\diver u\,dx\,d\tau
+
\int_0^t\int_{\partial\Omega}
\rho\partial_nZ^\alpha\phi\,
Z^\alpha\diver u\,dS\,d\tau
\\
&\triangleq
I_{1,\mathrm{int}}^\alpha
+
I_{1,\mathrm{bd}}^\alpha .
\end{aligned}
\end{equation}
Since
\[
\diver(\rho\nabla Z^\alpha\phi)
=
\nabla\rho\cdot\nabla Z^\alpha\phi
+
\rho Z^\alpha(\rho-1)
+
\rho[\Delta,Z^\alpha]\phi ,
\]
the first two terms are estimated by Proposition~\ref{prop:conormal_product}
and Proposition~\ref{prop:poisson_neumann_estimate}. The last one is treated in the same way as in
\eqref{eq:Rdelta_alpha_estimate}. Hence
\begin{equation}\label{eq:IE1_interior_estimate}
|I_{1,\mathrm{int}}^\alpha|
\leq
C_{m+2}[1+P(Q(t))]
\int_0^t
\mathcal E_m(\tau)\,d\tau .
\end{equation}
For the boundary part, using \eqref{eq:intro_phi_boundary}, integration by
parts along \(\partial\Omega\), Proposition~\ref{prop:anisotropic_trace}, and
Young's inequality, we get
\begin{equation}\label{eq:IE1_boundary_estimate}
|I_{1,\mathrm{bd}}^\alpha|
\leq
\delta
\int_0^t
\|\nabla Z^{m-2}\diver u(\tau)\|_{L^2}^2\,d\tau
+
C_\delta C_{m+2}[1+P(Q(t))]
\int_0^t
\mathcal E_m(\tau)\,d\tau .
\end{equation}

For \(I_2^\alpha\), integration by parts gives
\[
I_2^\alpha
=-\int_0^t\!\!\int_\Omega
\diver([Z^\alpha,\rho]\nabla\phi)\,Z^\alpha\diver u\,dx\,d\tau
+\int_0^t\!\!\int_{\partial\Omega}
h_\alpha Z^\alpha\diver u\,dS\,d\tau,
\qquad
h_\alpha:=n\cdot[Z^\alpha,\rho]\nabla\phi .
\]
The boundary term is not identically zero on a curved boundary. Indeed,
for a tangential vector field \(Z_i\), the Neumann condition implies
\(n\cdot Z_i\nabla\phi=-(Z_i n)\cdot\nabla\phi\).
More generally, if \(Z^\alpha\) contains no \(Z_3\), then on the boundary
\[
h_\alpha
=-\sum_{0<\beta\leq\alpha}\binom{\alpha}{\beta}
(Z^\beta\rho)
\sum_{0<\nu\leq\alpha-\beta}\binom{\alpha-\beta}{\nu}
(Z^\nu n)\cdot Z^{\alpha-\beta-\nu}\nabla\phi .
\]
In particular, each nonzero term contains a derivative of the normal and
at most \(m-2\) conormal derivatives of the unknowns. Since
\(\alpha_0\leq m-2\), there is a spatial tangential derivative \(T\) in
\(Z^\alpha\). Write \(Z^\alpha\diver u=Tg\), where \(g\) has at most
\(m-2\) conormal derivatives of \(\diver u\). Tangential integration by
parts yields
\[
\int_{\partial\Omega}h_\alpha Tg\,dS
=-\int_{\partial\Omega}
\big(Th_\alpha+(\diver_{\partial\Omega}T)h_\alpha\big)g\,dS.
\]
The local cut-off terms have the same lower order. More explicitly, the
product, trace, and Poisson estimates give
\[
\begin{aligned}
&\int_0^t
|Th_\alpha+(\diver_{\partial\Omega}T)h_\alpha|_{L^2(\partial\Omega)}^2\,d\tau
\leq C_{m+2}[1+P(Q(t))]\int_0^t\mathcal E_m\,d\tau,\\
&\int_0^t|g|_{L^2(\partial\Omega)}^2\,d\tau
\leq C\int_0^t\mathcal E_m\,d\tau
+C\left(\int_0^t\mathcal E_m\,d\tau\right)^{1/2}
\left(\int_0^t\|\nabla Z^{m-2}\diver u\|_{L^2}^2\,d\tau\right)^{1/2}.
\end{aligned}
\]
Here the first estimate uses only mixed pressure gradients with at most
\(m-2\) time derivatives; these are contained in \(\mathcal E_m\).
The second is the trace inequality
\(|g|_{L^2(\partial\Omega)}^2
\leq C\|g\|_{L^2}(\|g\|_{L^2}+\|\nabla g\|_{L^2})\).
If \(Z^\alpha\) contains \(Z_3\), its trace is zero and there is no boundary
contribution. Consequently, estimating the interior term as above and
applying Young's inequality to the boundary term, we obtain
\begin{equation}\label{eq:IE2_estimate}
\begin{aligned}
|I_2^\alpha|
\leq{}&\delta\int_0^t
\|\nabla Z^{m-2}\diver u\|_{L^2}^2\,d\tau\\
&+C_\delta C_{m+2}[1+P(Q(t))]
\int_0^t\mathcal E_m(\tau)\,d\tau .
\end{aligned}
\end{equation}

For \(I_3^\alpha\), integration by parts gives
\begin{equation}\label{eq:IE3_high_decomposition}
\begin{aligned}
I_3^\alpha=
-\int_0^t\int_\Omega
\diver\left(\rho[Z^\alpha,\nabla]\phi\right)
Z^\alpha\diver u\,dx\,d\tau \\
\quad
+
\int_0^t\int_{\partial\Omega}
\rho[Z^\alpha,\nabla]\phi\cdot n\,
Z^\alpha\diver u\,dS\,d\tau .
\end{aligned}
\end{equation}
Using Proposition~\ref{prop:conormal_product} for the interior part and
Proposition~\ref{prop:anisotropic_trace} for the boundary part, together with
Proposition~\ref{prop:poisson_neumann_estimate}, we obtain
\begin{equation}\label{eq:IE3_estimate}
|I_3^\alpha|
\leq
\delta
\int_0^t
\|\nabla Z^{m-2}\diver u(\tau)\|_{L^2}^2\,d\tau
+
C_\delta C_{m+2}[1+P(Q(t))]
\int_0^t
\mathcal E_m(\tau)\,d\tau .
\end{equation}
Combining \eqref{eq:IE1_interior_estimate}--
\eqref{eq:IE3_estimate}, we obtain
\begin{equation}\label{eq:electric_divu_high_final}
|\mathcal I_E^\alpha|
\leq
\delta
\int_0^t
\|\nabla Z^{m-2}\diver u(\tau)\|_{L^2}^2\,d\tau
+
C_\delta C_{m+2}[1+P(Q(t))]
\int_0^t
\mathcal E_m(\tau)\,d\tau .
\end{equation}

We next consider the inertial part. Integrating by parts, using the continuity
equation and \eqref{eq:intro_Navier_boundary}, and estimating the boundary
contribution by Proposition~\ref{prop:anisotropic_trace}, we get
\begin{equation}\label{eq:divu_high_inertial}
\begin{aligned}
&\int_0^t\int_\Omega
\left(
\rho Z^\alpha u_t
+
\rho u\cdot\nabla Z^\alpha u
\right)
\cdot\nabla Z^\alpha\diver u\,dx\,d\tau
\\
&\leq
-\frac12
\int_\Omega
\rho |Z^\alpha\diver u|^2(t)\,dx
+
\frac12
\int_\Omega
\rho_0 |Z^\alpha\diver u_0|^2\,dx
\\
&\quad
+
\delta
\int_0^t
\|\nabla Z^{m-2}\diver u(\tau)\|_{L^2}^2\,d\tau
+
C_\delta C_{m+2}[1+P(Q(t))]
\int_0^t
\mathcal E_m(\tau)\,d\tau .
\end{aligned}
\end{equation}

Next, we estimate the pressure term. Applying \(\nabla\) to
\eqref{eq:Zalpha_divu_pressure_relation} and arguing as in
\eqref{eq:pressure_main_substitution}--\eqref{eq:high_pressure_final}, we obtain,
after integration by parts in time and space,
\begin{equation}\label{eq:divu_high_pressure}
\begin{aligned}
&\int_0^t\int_\Omega
Z^\alpha\nabla p\cdot\nabla Z^\alpha\diver u\,dx\,d\tau
\\
&\leq
-\int_\Omega
\frac{1}{2\gamma p}|Z^\alpha\nabla p|^2(t)\,dx
+
\int_\Omega
\frac{1}{2\gamma p_0}|Z^\alpha\nabla p_0|^2\,dx
\\
&\quad
+
\delta
\int_0^t
\|\nabla\partial_t^{m-1}p(\tau)\|_{L^2}^2\,d\tau
+
C_\delta[1+P(Q(t))]
\int_0^t
\mathcal E_m(\tau)\,d\tau .
\end{aligned}
\end{equation}
The commutator terms generated by differentiating the coefficients
\((\gamma p)^{-1}\) and \(u(\gamma p)^{-1}\) are estimated by
Proposition~\ref{prop:conormal_product}, as in the pressure commutator estimate
of Lemma~\ref{lem:high_order_conormal}.

For the viscous terms, we use
\(\nabla\times\nabla Z^\alpha\diver u=0\), \eqref{eq:intro_Navier_boundary},
integration by parts along \(\partial\Omega\),
Proposition~\ref{prop:anisotropic_trace}, and Young's inequality. This yields
\begin{equation}\label{eq:divu_high_viscosity}
\begin{aligned}
&-\mu\varepsilon
\int_0^t\int_\Omega
Z^\alpha\nabla\times\omega\cdot\nabla Z^\alpha\diver u\,dx\,d\tau
+
(2\mu+\lambda)\varepsilon
\int_0^t
\|\nabla Z^\alpha\diver u\|_{L^2}^2\,d\tau
\\
&\geq
c\varepsilon
\int_0^t
\|\nabla Z^\alpha\diver u(\tau)\|_{L^2}^2\,d\tau
-
C\varepsilon
\int_0^t
\|\nabla Z^{m-2}\diver u(\tau)\|_{L^2}^2\,d\tau
\\
&\quad
-
C\varepsilon
\int_0^t
\|\nabla^2u(\tau)\|_{\mathcal H^{m-2}}^2\,d\tau
-
C_{m+2}[1+P(Q(t))]
\int_0^t
\mathcal E_m(\tau)\,d\tau .
\end{aligned}
\end{equation}

The commutators \(D_1^\alpha\) and \(D_2^\alpha\) satisfy, by
Proposition~\ref{prop:conormal_product}, integrating by parts,
\begin{equation}\label{eq:divu_high_momentum_commutators}
\begin{aligned}
&\left|
\int_0^t\int_\Omega
(D_1^\alpha+D_2^\alpha)\cdot\nabla Z^\alpha\diver u\,dx\,d\tau
\right|
\\
&\leq
\delta
\int_0^t
\|\nabla Z^\alpha\diver u(\tau)\|_{L^2}^2\,d\tau
+
C_\delta C_{m+2}[1+P(Q(t))]
\int_0^t
\mathcal E_m(\tau)\,d\tau .
\end{aligned}
\end{equation}

Substituting \eqref{eq:electric_divu_high_final},
\eqref{eq:divu_high_inertial}, \eqref{eq:divu_high_pressure},
\eqref{eq:divu_high_viscosity}, and
\eqref{eq:divu_high_momentum_commutators} into
\eqref{eq:divu_high_identity}, we
obtain \eqref{eq:divu_high_conormal_estimate}. This completes the proof.
\end{proof}

It remains to handle the pure time derivative case, which is not covered by
Lemma~\ref{lem:divu_high_conormal}. In this case no spatial conormal derivative
is available for tangential integration by parts, and the corresponding
estimate is therefore taken with an additional factor \(\varepsilon\).

\begin{lemma}[Pure time derivative estimate]\label{lem:pure_time_divu}
Let \(m\geq1\). Then it holds that
\begin{equation}\label{eq:pure_time_divu_estimate}
\begin{aligned}
&\sup_{0\leq \tau\leq t}
\varepsilon
\int_\Omega
\left(
\rho|\partial_t^{m-1}\diver u|^2
+
\frac{1}{\gamma p}
|\partial_t^{m-1}\nabla p|^2
\right)(\tau)\,dx  \\
&\quad
+
\varepsilon^2
\int_0^t
\|\nabla\partial_t^{m-1}\diver u(\tau)\|_{L^2}^2\,d\tau  \\
&\leq
C\varepsilon
\int_\Omega
\left(
\rho_0|\partial_t^{m-1}\diver u_0|^2
+
\frac{1}{\gamma p_0}
|\partial_t^{m-1}\nabla p_0|^2
\right)\,dx  \\
&\quad
+
C_{m+1}[1+P(Q(t))]
\int_0^t
\mathcal E_m(\tau)\,d\tau .
\end{aligned}
\end{equation}
\end{lemma}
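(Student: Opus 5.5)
We follow the proof of Lemma~\ref{lem:divu_high_conormal} with $Z^\alpha=\partial_t^{m-1}$, but now multiply the differentiated momentum equation by $\varepsilon\nabla\partial_t^{m-1}\diver u$ rather than by $\nabla Z^\alpha\diver u$. Concretely, we apply $\partial_t^{m-1}$ to \eqref{eq:momentum_curl_form}, multiply by $\varepsilon\nabla\partial_t^{m-1}\diver u$, and integrate over $\Omega\times(0,t)$. This produces the analogue of \eqref{eq:divu_high_identity} with an overall factor $\varepsilon$.

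\textbf{Main terms.} The inertial and pressure terms are treated exactly as in \eqref{eq:divu_high_inertial} and \eqref{eq:divu_high_pressure}. For the pressure we use $\nabla$ of \eqref{eq:Zalpha_divu_pressure_relation} with $\alpha=(m-1,0)$, and then integrate by parts in time. This yields the two positive quantities
\[
\frac{\varepsilon}{2}\int_\Omega\rho|\partial_t^{m-1}\diver u|^2\,dx
\quad\text{and}\quad
\frac{\varepsilon}{2}\int_\Omega\frac{1}{\gamma p}|\partial_t^{m-1}\nabla p|^2\,dx
\]
at time $t$, minus their initial values.

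\textbf{Dissipation and the viscous boundary term.} The viscous divergence term gives $(2\mu+\lambda)\varepsilon^2\int_0^t\|\nabla\partial_t^{m-1}\diver u\|_{L^2}^2\,d\tau$. The curl term uses $\nabla\times\nabla\partial_t^{m-1}\diver u=0$ and the boundary condition \eqref{eq:intro_Navier_boundary}, after which it reduces to
\[
\varepsilon^2\int_0^t\int_{\partial\Omega}(B\partial_t^{m-1}u)\cdot\Pi\nabla\partial_t^{m-1}\diver u\,dS\,d\tau .
\]
Here $\Pi\nabla$ is a spatial tangential derivative. We integrate by parts along $\partial\Omega$ and apply the trace estimate of Proposition~\ref{prop:anisotropic_trace}. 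The result is bounded by
\[
\varepsilon^2\bigl(\|\nabla\partial_t^{m-1}\diver u\|+\|\partial_t^{m-1}\diver u\|\bigr)^{1/2}\|\partial_t^{m-1}\diver u\|^{1/2}\,|\partial_t^{m-1}u|_{H^{1/2}(\partial\Omega)} .
\]
Absorbing by Young's inequality leaves terms of the form $\varepsilon^2(\|\nabla\partial_t^{m-1}u\|^2+\|\partial_t^{m-1}u\|^2)$. These are controlled by $\varepsilon^2\|\nabla u\|_{\mathcal H^{m-1}}^2\le\mathcal E_m$.

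\textbf{Electric and commutator terms.} The electric term $\varepsilon\int_0^t\int_\Omega\partial_t^{m-1}(\rho\nabla\phi)\cdot\nabla\partial_t^{m-1}\diver u\,dx\,d\tau$ is handled without spatial integration by parts. Cauchy--Schwarz and Young give
\[
\tfrac{c}{4}\varepsilon^2\int_0^t\|\nabla\partial_t^{m-1}\diver u\|^2\,d\tau+C\int_0^t\|\partial_t^{m-1}(\rho\nabla\phi)\|^2\,d\tau .
\]
The last integral is at most $C[1+P(Q)]\int_0^t\mathcal E_m\,d\tau$, since $\|\nabla\phi\|_{\mathcal H^m}$ and $\|\rho\|_{\mathcal H^m}$ are part of $\mathcal E_m$ and Proposition~\ref{prop:conormal_product} handles the products. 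The commutators $D_1^\alpha$ and $D_2^\alpha$ are treated the same way, since only lower time derivatives of $u_t$ and $\nabla u$ appear with coefficients bounded by $Q$. The factor $\varepsilon$ on the test function is exactly what lets every such term be absorbed by Young into the $\varepsilon^2$ dissipation, with no boundary terms.

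\textbf{Main obstacle.} The delicate step is the inertial term. Integrating $\rho\partial_t^{m}u\cdot\nabla\partial_t^{m-1}\diver u$ by parts in space creates the boundary term
\[
\varepsilon\int_{\partial\Omega}\rho\,(\partial_t^m u\cdot n)\,\partial_t^{m-1}\diver u\,dS ,
\]
which vanishes because $\partial_t^k u\cdot n=0$ with $n$ independent of $t$. The transport part creates
\[
\varepsilon\int_{\partial\Omega}\rho\,(u\cdot\nabla\partial_t^{m-1}u)\cdot n\,\partial_t^{m-1}\diver u\,dS .
\]
We rewrite it via $(u\cdot\nabla)v\cdot n=-v\cdot(u\cdot\nabla)n$, valid for tangent $v$. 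The resulting term is bounded by traces, controlled through $\varepsilon\|\nabla\partial_t^{m-1}\diver u\|\,\|\partial_t^{m-1}u\|_{H^1}$ and then absorbed by the $\varepsilon^2$ dissipation. We expect this trace handling, together with checking that every leftover quantity appears in $\mathcal E_m$ (in particular $\varepsilon\|\nabla\partial_t^{m-1}p\|^2$), to be the main point requiring care. Collecting all terms gives \eqref{eq:pure_time_divu_estimate}.
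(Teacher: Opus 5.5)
Your proposal is correct and follows essentially the same route as the paper: you test the $\partial_t^{m-1}$-differentiated momentum equation against $\varepsilon\nabla\partial_t^{m-1}\diver u$, absorb the electric, commutator, viscous-boundary and transport-boundary terms into the $\varepsilon^2$ dissipation by Young's inequality, and convert the inertial boundary term via $(u\cdot\nabla)v\cdot n=-v\cdot(u\cdot\nabla)n$, exactly as in \eqref{eq:pure_time_inertial}. One small fix is needed in the viscous boundary term: after the tangential integration by parts, pair $|\partial_t^{m-1}u|_{H^{1/2}(\partial\Omega)}$ with $|\partial_t^{m-1}\diver u|_{H^{1/2}(\partial\Omega)}\le C\|\partial_t^{m-1}\diver u\|_{H^1}$, as the paper does in Lemma~\ref{lem:div_gradrho_gradphi}, rather than with the $L^2(\partial\Omega)$ trace bound you wrote, which is half a derivative short; after Young's inequality your conclusion is unchanged.
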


\begin{proof}
Applying \(\partial_t^{m-1}\) to \eqref{eq:momentum_curl_form} and testing it
against \(\varepsilon\nabla\partial_t^{m-1}\diver u\) on
\(\Omega\times(0,t)\), we obtain
\begin{equation}\label{eq:pure_time_identity}
\begin{aligned}
&\varepsilon
\int_0^t\!\!\int_\Omega
\left(
\rho\partial_t^m u
+
\rho u\cdot\nabla\partial_t^{m-1}u
+
\partial_t^{m-1}\nabla p
\right)
\cdot
\nabla\partial_t^{m-1}\diver u\,dx\,d\tau
\\
={}&
\varepsilon
\int_0^t\!\!\int_\Omega
\partial_t^{m-1}(\rho\nabla\phi)
\cdot
\nabla\partial_t^{m-1}\diver u\,dx\,d\tau
\\
&-
\mu\varepsilon^2
\int_0^t\!\!\int_\Omega
\nabla\times\partial_t^{m-1}\omega
\cdot
\nabla\partial_t^{m-1}\diver u\,dx\,d\tau\\
&
+
(2\mu+\lambda)\varepsilon^2
\int_0^t
\|\nabla\partial_t^{m-1}\diver u\|_{L^2}^2\,d\tau
+
\varepsilon
\int_0^t\!\!\int_\Omega
\mathcal C_t^{m-1}
\cdot
\nabla\partial_t^{m-1}\diver u\,dx\,d\tau .
\end{aligned}
\end{equation}

where
\[
\mathcal C_t^{m-1}
:=
-[\partial_t^{m-1},\rho]u_t
-
[\partial_t^{m-1},\rho u\cdot\nabla]u .
\]

We first estimate the electric force term. Since time derivatives commute with
spatial derivatives,
\[
\partial_t^{m-1}(\rho\nabla\phi)
=
\rho\nabla\partial_t^{m-1}\phi
+
[\partial_t^{m-1},\rho]\nabla\phi .
\]
Hence, by Young's inequality and Proposition~\ref{prop:conormal_product},
\begin{equation}\label{eq:pure_time_electric_final}
\begin{aligned}
&\left|
\varepsilon
\int_0^t\!\!\int_\Omega
\partial_t^{m-1}(\rho\nabla\phi)
\cdot
\nabla\partial_t^{m-1}\diver u\,dx\,d\tau
\right|  \\
&\leq
\varepsilon
\int_0^t
\left(
\|\rho\nabla\partial_t^{m-1}\phi\|_{L^2}
+
\|[\partial_t^{m-1},\rho]\nabla\phi\|_{L^2}
\right)
\|\nabla\partial_t^{m-1}\diver u\|_{L^2}\,d\tau  \\
&\leq
\frac{2\mu+\lambda}{8}
\varepsilon^2
\int_0^t
\|\nabla\partial_t^{m-1}\diver u(\tau)\|_{L^2}^2\,d\tau
+
C_{m+1}[1+P(Q(t))]
\int_0^t
\mathcal E_m(\tau)\,d\tau .
\end{aligned}
\end{equation}

By the same argument as in Lemmas~\ref{lem:div_gradrho_gradphi} and
\ref{lem:divu_high_conormal}, using \eqref{eq:intro_Navier_boundary},
\eqref{eq:pressure_div_relation}, Proposition~\ref{prop:anisotropic_trace},
Proposition~\ref{prop:conormal_product}, and Young's inequality, we estimate
the standard terms as follows.

The inertial term can be estimated by
\begin{equation}\label{eq:pure_time_inertial}
\begin{aligned}
&\varepsilon
\int_0^t\int_\Omega
\left(
\rho\partial_t^{m-1} u_t
+
\rho u\cdot\nabla\partial_t^{m-1}u
\right)
\cdot
\nabla\partial_t^{m-1}\diver u\,dx\,d\tau
\\
={}&
-\varepsilon
\int_0^t\int_\Omega
\rho
\left(
\partial_t^{m-1}\diver u_t
+
u\cdot\nabla\partial_t^{m-1}\diver u
\right)
\partial_t^{m-1}\diver u\,dx\,d\tau
\\
&\quad
-\varepsilon
\int_0^t\int_\Omega
\left(
\nabla\rho\cdot\partial_t^{m-1}u_t
+
\left(\nabla(\rho u)\right)^{T}:
\nabla\partial_t^{m-1}u
\right)
\partial_t^{m-1}\diver u\,dx\,d\tau
\\
&\quad
-\varepsilon
\int_0^t\int_{\partial\Omega}
\rho\,(u\cdot\nabla)n\cdot\partial_t^{m-1}u\,
\partial_t^{m-1}\diver u\,dS\,d\tau
\\
\leq{}&
-\frac{\varepsilon}{2}
\int_\Omega
\rho|\partial_t^{m-1}\diver u|^2(t)\,dx
+
\frac{\varepsilon}{2}
\int_\Omega
\rho_0|\partial_t^{m-1}\diver u_0|^2\,dx
\\
&\quad
+
\frac{2\mu+\lambda}{16}
\varepsilon^2
\int_0^t
\|\nabla\partial_t^{m-1}\diver u\|_{L^2}^2\,d\tau
+
C_{m+1}[1+P(Q(t))]
\int_0^t
\mathcal E_m(\tau)\,d\tau .
\end{aligned}
\end{equation}

The pressure term can be estimated by
\begin{equation}\label{eq:pure_time_pressure}
\begin{aligned}
&\varepsilon
\int_0^t\int_\Omega
\partial_t^{m-1}\nabla p
\cdot
\nabla\partial_t^{m-1}\diver u\,dx\,d\tau
\\
&\leq
-\frac{\varepsilon}{2}
\int_\Omega
\frac{1}{\gamma p}
|\partial_t^{m-1}\nabla p|^2(t)\,dx
+
\frac{\varepsilon}{2}
\int_\Omega
\frac{1}{\gamma p_0}
|\partial_t^{m-1}\nabla p_0|^2\,dx
\\
&\quad
+
C_{m+1}[1+P(Q(t))]
\int_0^t
\mathcal E_m(\tau)\,d\tau .
\end{aligned}
\end{equation}

For the viscous term, we have
\begin{equation}\label{eq:pure_time_viscosity}
\begin{aligned}
&-\mu\varepsilon^2
\int_0^t\int_\Omega
\nabla\times\partial_t^{m-1}\omega
\cdot
\nabla\partial_t^{m-1}\diver u\,dx\,d\tau
\\
&\leq
\frac{2\mu+\lambda}{16}
\varepsilon^2
\int_0^t
\|\nabla\partial_t^{m-1}\diver u\|_{L^2}^2\,d\tau
+
C_{m+1}[1+P(Q(t))]
\int_0^t
\mathcal E_m(\tau)\,d\tau .
\end{aligned}
\end{equation}

The commutator term is controlled by
\begin{equation}\label{eq:pure_time_commutator}
\begin{aligned}
&\left|
\varepsilon
\int_0^t\int_\Omega
\mathcal C_t^{m-1}
\cdot
\nabla\partial_t^{m-1}\diver u\,dx\,d\tau
\right|
\\
&\leq
\frac{2\mu+\lambda}{16}
\varepsilon^2
\int_0^t
\|\nabla\partial_t^{m-1}\diver u\|_{L^2}^2\,d\tau
+
C_{m+1}[1+P(Q(t))]
\int_0^t
\mathcal E_m(\tau)\,d\tau .
\end{aligned}
\end{equation}

Substituting \eqref{eq:pure_time_electric_final},
\eqref{eq:pure_time_inertial}, \eqref{eq:pure_time_pressure},
\eqref{eq:pure_time_viscosity}, and \eqref{eq:pure_time_commutator} into
\eqref{eq:pure_time_identity}, we obtain
\eqref{eq:pure_time_divu_estimate}. This completes the proof.
\end{proof}

The preceding estimates provide the conormal control of \((p,u)\) and the
\(\varepsilon\)-weighted estimate for the highest pure time derivative of
\(\diver u\). However, in order to close the normal derivative
estimates, an estimate for
\(\partial_t^{m-1}\diver u\) is still required.  To avoid a circular estimate, we introduce
an auxiliary quantity
\begin{equation}\label{eq:Lambda_m_1_def}
\Lambda_m^1(t)
:=
\|(p,u)(t)\|_{\mathcal H^m}^2
+
\sum_{|\beta|\leq m-2}
\|Z^\beta\nabla p(t)\|_1^2
+
\sum_{|\beta|\leq m-2}
\|Z^\beta\nabla u(t)\|_1^2 .
\end{equation}
This quantity does not contain the term
\(\|\partial_t^{m-1}\diver u(t)\|_{L^2}^2\).

\begin{lemma}\label{lem:pure_time_divu_without_epsilon}
Let \(m\geq 3\). Then
\begin{equation}\label{eq:pure_time_divu_without_epsilon}
\|\partial_t^{m-1}\diver u(t)\|_{L^2}^2
\leq
C_2[1+P(Q(t))]P\left(\Lambda_m^1(t)\right).
\end{equation}
\end{lemma}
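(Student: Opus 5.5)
The plan is to read off \(\partial_t^{m-1}\diver u\) algebraically from the pressure equation \eqref{eq:pressure_div_relation}, with no energy method. Applying \(\partial_t^{m-1}\) to \(\diver u=-(p_t+u\cdot\nabla p)/(\gamma p)\) and expanding by Leibniz gives
\[
\partial_t^{m-1}\diver u
=
-\frac{1}{\gamma p}\partial_t^{m}p
-\frac{u}{\gamma p}\cdot\nabla\partial_t^{m-1}p
-\sum_{\substack{\beta+\nu=m-1\\ \beta\geq1}}C_{\beta}\,
\partial_t^{\beta}\Big(\frac{1}{\gamma p}\Big)\partial_t^{\nu}p_t
-\sum_{\substack{\beta+\nu=m-1\\ \beta\geq1}}C_{\beta}\,
\partial_t^{\beta}\Big(\frac{u}{\gamma p}\Big)\cdot\partial_t^{\nu}\nabla p .
\]
The first term is bounded in \(L^2\) by \(C\|p\|_{\mathcal H^m}\), since \(p\) is bounded below, so it is controlled by \(\Lambda_m^1\). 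The commutator sums should also be harmless. Each has at most \(m-1\) time derivatives on \(p_t\) (hence \(\leq m\) on \(p\)) or at most \(m-2\) time derivatives on \(\nabla p\). The latter are \(Z^\beta\nabla p\) with \(|\beta|\leq m-2\), contained in \(\Lambda_m^1\). The product estimate Proposition~\ref{prop:conormal_product}, with \(L^\infty\) norms of \(p,u,p_t,u_t,\nabla p\) from \(Q\), then bounds the sums by \(C_2[1+P(Q)]P(\Lambda_m^1)\). Where a high-order factor such as \(\partial_t^{\beta}(1/(\gamma p))\) with \(\beta\) large needs an \(L^\infty\) bound, I would use the anisotropic embedding \eqref{eq:anisotropic_sobolev} with \(\Lambda_m^1\). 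This works because \(m\geq3\) leaves room, and it is why a polynomial \(P(\Lambda_m^1)\) rather than a linear bound appears.

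The genuine obstacle is the term \(\frac{u}{\gamma p}\cdot\nabla\partial_t^{m-1}p\). It contains \(m-1\) time derivatives of \(\nabla p\), which \(\Lambda_m^1\) lacks; it only contains \(\varepsilon\)-weighted or lower versions. I plan to split \(u\cdot\nabla=u_\tau\cdot\nabla_\tau+(u\cdot n)\partial_n\) in each boundary chart. The tangential part \(u^1Z_1+u^2Z_2\) applied to \(\partial_t^{m-1}p\) is a conormal derivative of order \(m\), hence bounded by \(\|u\|_{L^\infty}\|p\|_{\mathcal H^m}\). For the normal part, \(u\cdot n=0\) on \(\partial\Omega\) gives \(|u\cdot n|\leq C\varphi(z)\|\nabla u\|_{L^\infty}\) near the boundary. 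Thus \((u\cdot n)\partial_z\partial_t^{m-1}p=\frac{u\cdot n}{\varphi(z)}Z_3\partial_t^{m-1}p\), with \(\frac{u\cdot n}{\varphi}\in L^\infty\) controlled by \(Q\). This is again an order-\(m\) conormal derivative of \(p\), controlled by \(\|p\|_{\mathcal H^m}\). In the interior patch \(\Omega_0\), \(\nabla\) is itself a combination of the \(Z_i\). Hence
\[
\Big\|\frac{u}{\gamma p}\cdot\nabla\partial_t^{m-1}p\Big\|_{L^2}\leq C_2\big(\|u\|_{L^\infty}+\|\nabla u\|_{L^\infty}\big)\|p\|_{\mathcal H^m},
\]
with \(C_2\) coming from the \(C^2\) norms of \(\psi_k\) in the change of coordinates and the Hardy-type bound on \(u\cdot n/\varphi\).

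Summing the three contributions gives \eqref{eq:pure_time_divu_without_epsilon}. The only delicate step is the conormal rewriting of the convective term using \(u\cdot n|_{\partial\Omega}=0\). I would check carefully that the commutators arising when \(n\) and the coordinate vectors are extended inside (independent of \(z\)) produce only lower-order terms. These should be absorbed by \(\|p\|_{\mathcal H^{m-1}}\) and \(\sum_{|\beta|\le m-2}\|Z^\beta\nabla p\|_1\).
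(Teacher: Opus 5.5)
Your proposal is correct and follows the same route as the paper: apply $\partial_t^{m-1}$ to \eqref{eq:pressure_div_relation}, split off the two leading terms, and bound the Leibniz commutators by Proposition~\ref{prop:conormal_product} using the $L^\infty$ quantities in $Q(t)$. Where the paper simply asserts that $\frac{u}{\gamma p}\cdot\nabla\partial_t^{m-1}p$ is controlled by $\Lambda_m^1$, even though $\Lambda_m^1$ does not contain $\nabla\partial_t^{m-1}p$ directly, your argument supplies the missing justification: $u\cdot n=0$ on $\partial\Omega$ makes $u\cdot\nabla$ a combination of $Z_1$, $Z_2$ and $\frac{u\cdot n}{\varphi(z)}Z_3$, with coefficients bounded by $\|u\|_{L^\infty}+\|\nabla u\|_{L^\infty}$.
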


\begin{proof}
By \eqref{eq:pressure_div_relation},
\[
\diver u
=
-\frac{p_t}{\gamma p}
-
\frac{u\cdot\nabla p}{\gamma p}.
\]
Applying \(\partial_t^{m-1}\) to this identity gives
\begin{equation}\label{eq:dtm_divu_decomp_m9}
\begin{aligned}
\partial_t^{m-1}\diver u
={}&
-\frac{1}{\gamma p}\partial_t^m p
-
\frac{u}{\gamma p}\cdot\nabla\partial_t^{m-1}p  \\
&-
\left[
\partial_t^{m-1},\frac{1}{\gamma p}
\right]p_t
-
\left[
\partial_t^{m-1},\frac{u}{\gamma p}
\right]\cdot\nabla p .
\end{aligned}
\end{equation}
The two leading terms are controlled by the definition of
\(\Lambda_m^1(t)\):
\begin{equation}\label{eq:dtm_divu_leading_est}
\left\|
\frac{1}{\gamma p}\partial_t^m p
\right\|_{L^2}^2
+
\left\|
\frac{u}{\gamma p}\cdot\nabla\partial_t^{m-1}p
\right\|_{L^2}^2
\leq
C[1+P(Q(t))]\Lambda_m^1(t).
\end{equation}
For the commutator terms, Proposition~\ref{prop:conormal_product} yields
\begin{equation}\label{eq:dtm_divu_commutator_est}
\begin{aligned}
&\left\|
\left[
\partial_t^{m-1},\frac{1}{\gamma p}
\right]p_t
\right\|_{L^2}^2
+
\left\|
\left[
\partial_t^{m-1},\frac{u}{\gamma p}
\right]\cdot\nabla p
\right\|_{L^2}^2  \\
&\leq
C_2[1+P(Q(t))]P\left(\Lambda_m^1(t)\right).
\end{aligned}
\end{equation}
Combining \eqref{eq:dtm_divu_decomp_m9}--
\eqref{eq:dtm_divu_commutator_est}, we obtain
\eqref{eq:pure_time_divu_without_epsilon}. This completes the proof.
\end{proof}
\begin{lemma}
\label{lem:grad_high_time_p}
Let \(m\geq1\). Then
\begin{equation}\label{eq:grad_high_time_p_estimate}
\begin{aligned}
\int_0^t
\|\nabla\partial_t^{m-1}p(\tau)\|_{L^2}^2\,d\tau
\leq{}&
C_{m+2}[1+P(Q(t))]
\int_0^t
\mathcal E_m(\tau)\,d\tau  \\
&+
C\varepsilon^2
\int_0^t
\|\nabla^2u(\tau)\|_{\mathcal H^{m-1}}^2\,d\tau .
\end{aligned}
\end{equation}
\end{lemma}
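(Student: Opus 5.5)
The plan is to read the momentum equation \eqref{eq:momentum_curl_form} as an equation for \(\nabla p\) and differentiate it \(m-1\) times in time. Since \(\partial_t\) commutes with \(\nabla\), this gives
\[
\nabla\partial_t^{m-1}p
=
-\partial_t^{m-1}(\rho u_t+\rho u\cdot\nabla u)
+\partial_t^{m-1}(\rho\nabla\phi)
-\mu\varepsilon\nabla\times\partial_t^{m-1}\omega
+(2\mu+\lambda)\varepsilon\nabla\partial_t^{m-1}\diver u .
\]
Each term on the right is then estimated in \(L^2\), and the squares are integrated over \((0,t)\). No boundary terms or energy structure are needed: this is a pointwise-in-time algebraic estimate, and the time integration only matches the form of \eqref{eq:grad_high_time_p_estimate}.

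\textbf{Inertial terms.} Expand by Leibniz. The leading piece \(\rho\partial_t^m u\) is bounded by \(C\|u\|_{\mathcal H^m}^2\le C\mathcal E_m\). The piece \(\rho u\cdot\nabla\partial_t^{m-1}u\) is bounded by \(Q\,\|\nabla u\|_{\mathcal H^{m-1}}^2\le Q\,\mathcal E_m\). The commutators \([\partial_t^{m-1},\rho]u_t\) and \([\partial_t^{m-1},\rho u\cdot\nabla]u\) are handled by the time-integrated product estimate in Proposition~\ref{prop:conormal_product}. Their factors are \(L^\infty\)-controlled by \(Q\) (including \(\rho_t\), \(u_t\), \(\nabla u\)), and the \(\mathcal H^{m-1}\) or \(\mathcal H^m\) norms of \(\rho\), \(u\) and \(\nabla u\) are bounded by \(\mathcal E_m\), using \(\rho=p^{1/\gamma}\) with \(\rho\) bounded below.

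\textbf{Electric term.} Write \(\partial_t^{m-1}(\rho\nabla\phi)=\rho\nabla\partial_t^{m-1}\phi+[\partial_t^{m-1},\rho]\nabla\phi\). Both pieces are bounded by \(C[1+P(Q)]\big(\|\nabla\phi\|_{\mathcal H^{m-1}}^2+\|\rho\|_{\mathcal H^{m-1}}^2\big)\), and hence by the \(\|\nabla\phi\|_{\mathcal H^m}^2\) and \(\|p\|_{\mathcal H^m}^2\) parts of \(\mathcal E_m\), exactly as in \eqref{eq:pure_time_electric_final}.

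\textbf{Viscous terms.} These carry one factor of \(\varepsilon\), so
\[
\varepsilon^2\big(\|\nabla\times\partial_t^{m-1}\omega\|_{L^2}^2+\|\nabla\partial_t^{m-1}\diver u\|_{L^2}^2\big)\le C\varepsilon^2\|\nabla^2\partial_t^{m-1}u\|_{L^2}^2\le C\varepsilon^2\|\nabla^2u\|_{\mathcal H^{m-1}}^2 .
\]
This produces exactly the last term of \eqref{eq:grad_high_time_p_estimate}.

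Summing the bounds, squaring and integrating in time gives the lemma with constant \(C_{m+2}[1+P(Q(t))]\). The constant depends only on the geometry, through the commutator coefficients where the conormal norms are compared with Cartesian derivatives.

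\textbf{Main obstacle.} The delicate point is the commutator \([\partial_t^{m-1},\rho u\cdot\nabla]u\) and related terms such as \(\partial_t^{j}u\cdot\nabla\partial_t^{m-1-j}u\). These require \(\nabla\partial_t^k u\) in \(L^2\) for \(k\le m-1\), and also \(L^\infty\) control of the low-order factor. I would split by which factor carries at most \(m/2\) derivatives. The low factor is put in \(L^\infty\): either directly via \(Q\), or, when it has between two and roughly \(m/2\) time derivatives, via the anisotropic embedding \eqref{eq:anisotropic_sobolev}, which uses \(m\ge 6\) implicitly. The high factor is kept in \(L^2\) using \(\|\nabla u\|_{\mathcal H^{m-1}}\le\mathcal E_m\). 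The resulting polynomial in \(\mathcal E_m\) is absorbed into the \(P(Q)\)-type bound in the same way as the preceding lemmas do.
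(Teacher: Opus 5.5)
Your first three paragraphs coincide with the paper's proof, and they already prove the lemma. The paper uses the same decomposition, bounds $\rho\partial_t^m u$ and $\rho u\cdot\nabla\partial_t^{m-1}u$ directly, and handles the commutators with the time-integrated product estimate of Proposition~\ref{prop:conormal_product}. It also splits the electric force as $\rho\nabla\partial_t^{m-1}\phi+[\partial_t^{m-1},\rho]\nabla\phi$, and the viscous terms give the $\varepsilon^2$ term.

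Your ``main obstacle'' paragraph, however, should be dropped. The splitting it proposes is unnecessary, and as described it would not give the estimate as stated. Proposition~\ref{prop:conormal_product} is of Gagliardo--Nirenberg type, so only the $L^\infty$ norms of the two base functions enter. For example, write
\[
[\partial_t^{m-1},\rho u\cdot\nabla]u=\sum_{j=1}^{m-1}\binom{m-1}{j}\,\partial_t^{j-1}\big(\partial_t(\rho u)\big)\cdot\partial_t^{m-1-j}(\nabla u),
\]
and apply the proposition with $f=\partial_t(\rho u)$, $g=\nabla u$ and $k=m-2$. This costs only $\|\partial_t(\rho u)\|_{L^\infty}$ and $\|\nabla u\|_{L^\infty}$, both controlled by $Q$. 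The remaining factor is $\int_0^t(\|\rho u\|_{\mathcal H^{m-1}}^2+\|\nabla u\|_{\mathcal H^{m-2}}^2)\,d\tau\le C[1+P(Q(t))]\int_0^t\mathcal E_m\,d\tau$. The same works with $f=\rho_t$ and $g=u_t$ or $g=\nabla\phi$, so no intermediate factor $\partial_t^j u$ ever needs an $L^\infty$ bound.

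If you instead bound such intermediate factors through \eqref{eq:anisotropic_sobolev}, you get $\|\partial_t^j u\|_{L^\infty}^2\lesssim P(\Lambda_m^1)$, and hence terms like $\int_0^t P(\mathcal E_m)\,d\tau$. These cannot be ``absorbed into the $P(Q)$-type bound'': $Q$ is an $L^\infty$-type quantity and does not control $\mathcal E_m$. You would end up with a weaker, nonlinear estimate that also needs $m\geq 6$. The lemma asserts a bound linear in $\int_0^t\mathcal E_m\,d\tau$ for every $m\geq1$.
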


\begin{proof}
Applying \(\partial_t^{m-1}\) to \eqref{eq:momentum_curl_form} and solving for
\(\nabla\partial_t^{m-1}p\), we get
\begin{equation}\label{eq:grad_high_time_p_identity}
\begin{aligned}
\nabla\partial_t^{m-1}p
={}&
-\partial_t^{m-1}(\rho u_t)
-
\partial_t^{m-1}(\rho u\cdot\nabla u)
-\mu\varepsilon\partial_t^{m-1}\nabla\times\omega  \\
&+
(2\mu+\lambda)\varepsilon\partial_t^{m-1}\nabla\diver u
+
\partial_t^{m-1}(\rho\nabla\phi).
\end{aligned}
\end{equation}
We estimate each term on the right-hand side of
\eqref{eq:grad_high_time_p_identity}. First,
\[
\partial_t^{m-1}(\rho u_t)
=
\rho\partial_t^m u
+
[\partial_t^{m-1},\rho]u_t .
\]
Thus, by Proposition~\ref{prop:conormal_product},
\begin{equation}\label{eq:gradp_time_rho_ut}
\int_0^t
\|\partial_t^{m-1}(\rho u_t)(\tau)\|_{L^2}^2\,d\tau
\leq
C_{m+1}[1+P(Q(t))]
\int_0^t
\mathcal E_m(\tau)\,d\tau .
\end{equation}
Similarly,
\[
\partial_t^{m-1}(\rho u\cdot\nabla u)
=
\rho u\cdot\nabla\partial_t^{m-1}u
+
[\partial_t^{m-1},\rho u]\cdot\nabla u ,
\]
and Proposition~\ref{prop:conormal_product} gives
\begin{equation}\label{eq:gradp_time_transport}
\int_0^t
\|\partial_t^{m-1}(\rho u\cdot\nabla u)(\tau)\|_{L^2}^2\,d\tau
\leq
C_{m+1}[1+P(Q(t))]
\int_0^t
\mathcal E_m(\tau)\,d\tau .
\end{equation}
For the viscous terms, we directly have
\begin{equation}\label{eq:gradp_time_viscous}
\begin{aligned}
&\varepsilon^2
\int_0^t
\|\partial_t^{m-1}(\nabla\times\omega,\nabla\diver u)(\tau)\|_{L^2}^2\,d\tau
\leq
C\varepsilon^2
\int_0^t
\|\nabla^2u(\tau)\|_{\mathcal H^{m-1}}^2\,d\tau .
\end{aligned}
\end{equation}

It remains to estimate the electric force term. Since
\[
\begin{aligned}
\partial_t^{m-1}(\rho\nabla\phi)
&=
\rho\nabla\partial_t^{m-1}\phi
+
[\partial_t^{m-1},\rho]\nabla\phi, \\
[\partial_t^{m-1},\rho]\nabla\phi
&=
\sum_{j=1}^{m-1}
C_{m-1}^j
\partial_t^j\rho\,
\nabla\partial_t^{m-1-j}\phi .
\end{aligned}
\]

where \(C_{m-1}^j\) denotes the usual binomial coefficient.
Proposition~\ref{prop:conormal_product} gives
\begin{equation}\label{eq:gradp_time_electric_estimate}
\begin{aligned}
\int_0^t
\|\partial_t^{m-1}(\rho\nabla\phi)(\tau)\|_{L^2}^2\,d\tau
&\leq
C_{m+1}[1+P(Q(t))]
\int_0^t
\mathcal E_m(\tau)\,d\tau .
\end{aligned}
\end{equation}

Combining \eqref{eq:grad_high_time_p_identity}--
\eqref{eq:gradp_time_electric_estimate}, we obtain
\eqref{eq:grad_high_time_p_estimate}. This completes the proof.
\end{proof}

The estimates obtained above still contain the term
\(\|\nabla Z^{m-2}\diver u\|_{L^2}^2\). A direct estimate of the corresponding
second-order spatial derivatives is not convenient in a general domain. Instead,
we use the pressure equation to rewrite \(\diver u\), so that the tangential and
time derivative parts are controlled by the conormal energy, while the normal
part is treated by the Hardy inequality near the boundary.

\begin{lemma}
\label{lem:normal_divu_estimate}
Let \(m\geq2\). Then
\begin{equation}\label{eq:normal_divu_estimate}
\int_0^t
\|\nabla Z^{m-2}\diver u(\tau)\|_{L^2}^2\,d\tau
\leq
C\int_0^t\|\nabla\partial_t^{m-1}p\|_{L^2}^2\,d\tau
+C_m[1+P(Q(t))]
\int_0^t
\mathcal E_m(\tau)\,d\tau .
\end{equation}
\end{lemma}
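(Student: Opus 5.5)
We plan to use the pressure equation \eqref{eq:pressure_div_relation} to express \(\diver u\) through \(p_t\) and \(u\cdot\nabla p\). Then \(\nabla Z^\beta\diver u\), \(|\beta|=m-2\), becomes a combination of derivatives of \(p\) that already appear in \(\mathcal E_m\), up to one troublesome normal piece. Fix \(|\beta|=m-2\) and apply \(\nabla Z^\beta\) to
\[
\diver u=-\frac{p_t}{\gamma p}-\frac{u\cdot\nabla p}{\gamma p}.
\]
This gives
\[
\nabla Z^\beta\diver u
=-\frac{1}{\gamma p}\nabla Z^\beta p_t-\frac{1}{\gamma p}\nabla Z^\beta(u\cdot\nabla p)+\mathcal R_\beta ,
\]
where \(\mathcal R_\beta\) collects the terms in which at least one derivative falls on \((\gamma p)^{-1}\).

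For the first term, \(\nabla Z^\beta p_t=\nabla Z^{\beta'}\) with \(|\beta'|=m-1\). If \(\beta'\) is purely temporal, this is \(\nabla\partial_t^{m-1}p\), which is exactly the first term on the right of \eqref{eq:normal_divu_estimate}. Otherwise \(\beta'\) contains at most \(m-2\) time derivatives and at least one spatial conormal field. In that case we commute \(\nabla\) past the \(Z\)'s, using \([\nabla,Z_3]\sim\varphi'\partial_z\) and the boundedness of the coefficients. The result is bounded by \(\sum_{k\le m-2}\|\nabla\partial_t^kp\|_{m-1-k}\), up to lower-order terms, and these belong to \(\mathcal E_m\). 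The remainder \(\mathcal R_\beta\) and the commutator pieces are products estimated in \(L^2_tL^2_x\) by Proposition~\ref{prop:conormal_product}. In each product the \(L^\infty\) factor is controlled by \(Q(t)\), and the other factor has at most \(m-1\) conormal derivatives on \(\nabla p\), \(p_t\) or \(u\). This yields \(C_m[1+P(Q(t))]\int_0^t\mathcal E_m\).

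The main obstacle is the transport term \(\nabla Z^\beta(u\cdot\nabla p)\). Its principal part is \(u\cdot\nabla\nabla Z^\beta p\), which contains second normal derivatives of \(p\) not present in \(\mathcal E_m\). We split \(u\cdot\nabla=u^1Z_1+u^2Z_2+u\cdot n\,\partial_n\) in the local basis. The tangential part gives \(u^iZ_i\nabla Z^\beta p\). Since \(Z_iZ^\beta\) has order \(m-1\) and \(\alpha_0\le m-2\), this is again controlled by \(\mathcal E_m\).

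For the normal part we use \(u\cdot n=0\) on \(\partial\Omega\). Writing \(u\cdot n=\varphi(z)\,\frac{u\cdot n}{\varphi(z)}\), we get
\[
(u\cdot n)\partial_n\nabla Z^\beta p=\frac{u\cdot n}{\varphi}\,Z_3\nabla Z^\beta p ,
\]
which has the form of one more conormal derivative on \(\nabla Z^\beta p\), hence is again in \(\mathcal E_m\). The factor \(\frac{u\cdot n}{\varphi}\) is bounded by the Hardy-type bound \(\|\frac{u\cdot n}{\varphi}\|_{L^\infty}\le C\|\nabla u\|_{L^\infty}\le CQ^{1/2}\), obtained by the mean value theorem in \(z\) since \(\varphi'(0)>0\). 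Away from the boundary, in \(\Omega_0\), all derivatives are conormal and no splitting is needed.

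The delicate point we expect is bookkeeping. When \(\beta\) is \(\partial_t^{m-2}\) and we add a \(Z_i\) or a \(Z_3\), the resulting multi-index must stay within \(\alpha_0\le m-2\), so that \(\sum_k\|\nabla\partial_t^kp\|_{m-1-k}\) really covers it. One must also check that the top pure time term \(\nabla\partial_t^{m-1}p\) arises only from the \(p_t\) term, where we keep it explicitly. Summing over \(\beta\) and over the covering, and integrating in time, gives \eqref{eq:normal_divu_estimate}.
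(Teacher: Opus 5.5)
Your outline matches the paper's proof: the pressure equation, keeping $\nabla\partial_t^{m-1}p$ explicitly, a tangential/normal splitting of the transport term, and $u\cdot n=0$ with $Z_3=\varphi\partial_z$ and a Hardy inequality. The way you organize the transport term, however, leaves a real gap. You apply $\nabla Z^\beta$ to $u\cdot\nabla p$ first and split only the principal part $u\cdot\nabla\nabla Z^\beta p$. The other Leibniz terms $Z^\gamma u\cdot\nabla(\nabla Z^{\beta-\gamma}p)$, $0<\gamma\le\beta$, are left implicitly to the product estimate.

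These terms are not products with only conormal derivatives on $\nabla p$. They contain $\partial_z^2Z^{\beta-\gamma}p$ multiplied by the normal component of $Z^\gamma u$. That coefficient does not vanish on a curved boundary when $\gamma$ contains $Z_1$ or $Z_2$, since on $\partial\Omega$ one has $Z_1u\cdot n=Z_1(u\cdot n)-u\cdot Z_1n=-u\cdot Z_1n$. Neither $\mathcal E_m$ nor $Q(t)$ controls second normal derivatives of $p$, so these terms cannot be bounded one by one. They become harmless only after recombination with the tangential terms. The same problem sits inside your $\mathcal R_\beta$, which contains $Z^\gamma\big((\gamma p)^{-1}\big)\nabla Z^{\beta-\gamma}(u\cdot\nabla p)$.

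The paper avoids this in two ways. First, it writes $\diver u=-\gamma^{-1}(\partial_t\ln p+u\cdot\nabla\ln p)$, which removes the factor $(\gamma p)^{-1}$ and hence $\mathcal R_\beta$. Second, it splits $u\cdot\nabla\ln p=u_y\cdot\nabla_y\ln p+(u\cdot N)\partial_z\ln p$ \emph{before} applying $\nabla Z^\alpha$. Then every conormal derivative that hits the normal coefficient produces $Z^\gamma(u\cdot N)$, which still vanishes on $\partial\Omega$. One then writes $(u\cdot N)\nabla\partial_z\ln p=\frac{u\cdot N}{\varphi}Z_3\nabla\ln p$ before distributing the $m-2$ conormal derivatives.

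The product estimate then needs $\frac{u\cdot N}{\varphi}$ in $\mathcal H^{m-2}$, that is, the conormal $L^2$ Hardy inequality $\sum_{|\beta|\le1}\|Z^\beta(\frac{u\cdot N}{\varphi})\|_{\mathcal H^{m-2}}\le C_m(\|\nabla u\|_{\mathcal H^{m-1}}+\|u\|_{\mathcal H^{m-1}})$. The $L^\infty$ bound $\|\frac{u\cdot n}{\varphi}\|_{L^\infty}\le C\|\nabla u\|_{L^\infty}$ you cite handles only the top-order term, not the terms where many conormal derivatives fall on the coefficient.
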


\begin{proof}
By \eqref{eq:pressure_div_relation}, we may write
\[
\diver u
=
-\frac{1}{\gamma}
\left(
\partial_t\ln p
+
u\cdot\nabla\ln p
\right).
\]
Let \(|\alpha|\leq m-2\). In a boundary coordinate patch, we decompose
\[
u\cdot\nabla\ln p
=
u_y\cdot\nabla_y\ln p
+
(u\cdot N)\partial_z\ln p .
\]
Hence
\begin{equation}\label{eq:divu_normal_decomposition}
\begin{aligned}
\int_0^t
\|\nabla Z^\alpha\diver u\|_{L^2}^2\,d\tau
\leq{}&
C
\int_0^t
\Big(
\|\nabla Z^\alpha\partial_t\ln p\|_{L^2}^2
+
\|\nabla Z^\alpha(u_y\cdot\nabla_y\ln p)\|_{L^2}^2  \\
&\qquad\qquad
+
\|\nabla Z^\alpha((u\cdot N)\partial_z\ln p)\|_{L^2}^2
\Big)\,d\tau .
\end{aligned}
\end{equation}

The first two terms are estimated as in
\cite[Lemma~3.12, (3.98)--(3.100)]{WangXinYong2015}.
The case \(Z^\alpha=\partial_t^{m-2}\) contains
\(\nabla\partial_t^{m-1}\ln p\), so its highest pressure derivative must
be retained without an \(\varepsilon\) weight. The other mixed derivatives
are contained in \(\mathcal E_m\). Thus Proposition~\ref{prop:conormal_product}
gives
\begin{equation}\label{eq:divu_time_tangential_terms}
\begin{aligned}
&\int_0^t
\|\nabla Z^\alpha\partial_t\ln p\|_{L^2}^2\,d\tau
+
\int_0^t
\|\nabla Z^\alpha(u_y\cdot\nabla_y\ln p)\|_{L^2}^2\,d\tau  \\
&\leq
C\int_0^t\|\nabla\partial_t^{m-1}p\|_{L^2}^2\,d\tau
+C_m[1+P(Q(t))]
\int_0^t
\mathcal E_m(\tau)\,d\tau .
\end{aligned}
\end{equation}

It remains to estimate the normal part. Since \(u\cdot n=0\) on
\(\partial\Omega\), the coefficient \(u\cdot N\) vanishes on the boundary. Using
\(Z_3=\varphi(z)\partial_z\) with \(\varphi(z)\sim z\) near
\(\partial\Omega\), Proposition~\ref{prop:conormal_product}, and the Hardy
inequality, we obtain
\begin{equation}\label{eq:divu_normal_part_estimate}
\begin{aligned}
&\int_0^t
\|\nabla Z^\alpha((u\cdot N)\partial_z\ln p)(\tau)\|_{L^2}^2\,d\tau  \\
&\leq
\int_0^t
\|\nabla((u\cdot N)\partial_z\ln p)(\tau)\|_{\mathcal H^{m-2}}^2\,d\tau  \\
&\leq
C
\int_0^t
\|\nabla(u\cdot N)\,\partial_z\ln p\|_{\mathcal H^{m-2}}^2\,d\tau
+
C
\int_0^t
\left\|
\frac{u\cdot N}{\varphi(z)}\,Z_3\nabla\ln p
\right\|_{\mathcal H^{m-2}}^2\,d\tau  \\
&\leq
C_m[1+P(Q(t))]
\int_0^t
\left(
\mathcal E_m(\tau)
+
\sum_{|\beta|\leq1}
\left\|
Z^\beta\left(\frac{u\cdot N}{\varphi(z)}\right)(\tau)
\right\|_{\mathcal H^{m-2}}^2
\right)d\tau  \\
&\leq
C_m[1+P(Q(t))]
\int_0^t
\mathcal E_m(\tau)\,d\tau .
\end{aligned}
\end{equation}
Here \(N\) denotes the normal direction in the boundary coordinate patch,
\(z\) is the normal coordinate, and \(\varphi(z)\sim z\) is the coefficient in
the conormal vector field \(Z_3=\varphi(z)\partial_z\). In the last step we used
the Hardy inequality
\[
\sum_{|\beta|\leq1}
\left\|
Z^\beta\left(\frac{u\cdot N}{\varphi(z)}\right)
\right\|_{\mathcal H^{m-2}}
\leq
C_m
\left(
\|\nabla u\|_{\mathcal H^{m-1}}
+
\|u\|_{\mathcal H^{m-1}}
\right),
\]
which follows from \(u\cdot n=0\) on \(\partial\Omega\).

Combining \eqref{eq:divu_normal_decomposition}--
\eqref{eq:divu_normal_part_estimate}, summing over \(|\alpha|\leq m-2\), and
using a partition of unity, we obtain \eqref{eq:normal_divu_estimate}. This
completes the proof.
\end{proof}

Combining \eqref{eq:normal_divu_estimate} with
\eqref{eq:grad_high_time_p_estimate}, and using
\(\varepsilon\|\nabla\partial_t^{m-1}p\|_{L^2}^2\leq\mathcal E_m\),
we obtain, for \(0<\delta\leq1\),
\begin{equation}\label{eq:normal_divu_weighted_closure}
\begin{aligned}
&(\delta+\varepsilon)\int_0^t
\|\nabla Z^{m-2}\diver u\|_{L^2}^2\,d\tau
+\delta\int_0^t\|\nabla\partial_t^{m-1}p\|_{L^2}^2\,d\tau\\
&\leq C\delta\varepsilon^2\int_0^t
\|\nabla^2u\|_{\mathcal H^{m-1}}^2\,d\tau
+C_m[1+P(Q(t))]\int_0^t\mathcal E_m(\tau)\,d\tau.
\end{aligned}
\end{equation}
Here \(C\) is independent of \(\delta\) and \(\varepsilon\).

\subsection{Normal derivative estimates}

In order to estimate \(\|\nabla u\|_{\mathcal H^{m-1}}\), it remains to
control the normal derivative near the boundary. Let \(\chi\) be a cut-off
function compactly supported in one boundary coordinate patch and equal to one
in a neighborhood of the boundary. The tangential derivatives satisfy
\[
\|\chi\partial_{y_i}u\|_{\mathcal H^{m-1}}
\leq
C\|u\|_{\mathcal H^m},
\qquad i=1,2.
\]
Thus it suffices to estimate \(\|\chi\partial_nu\|_{\mathcal H^{m-1}}\).

In the local coordinates, one has
\begin{equation}\label{eq:normal_div_decomposition}
\diver u
=
\partial_nu\cdot n
+
(\Pi\partial_{y_1}u)_1
+
(\Pi\partial_{y_2}u)_2,
\end{equation}
and
\begin{equation}\label{eq:normal_projection_decomposition}
\partial_nu
=
(\partial_nu\cdot n)n
+
\Pi(\partial_nu).
\end{equation}
It follows from \eqref{eq:normal_div_decomposition} and
\eqref{eq:normal_projection_decomposition} that
\begin{equation}\label{eq:normal_derivative_reduction}
\begin{aligned}
\|\chi\partial_nu\|_{\mathcal H^{m-1}}
&\leq
\|\chi\partial_nu\cdot n\|_{\mathcal H^{m-1}}
+
\|\chi\Pi(\partial_nu)\|_{\mathcal H^{m-1}}  \\
&\leq
C_m
\left(
\|\chi\diver u\|_{\mathcal H^{m-1}}
+
\|\chi\Pi(\partial_nu)\|_{\mathcal H^{m-1}}
+
\|u\|_{\mathcal H^m}
\right).
\end{aligned}
\end{equation}
The terms \(\|u\|_{\mathcal H^m}\) and
\(\|\chi\diver u\|_{\mathcal H^{m-1}}\) have already been controlled in the
previous estimates. Hence it is enough to estimate
\(\|\chi\Pi(\partial_nu)\|_{\mathcal H^{m-1}}\).

We extend the boundary matrix \(B\) into the local coordinate patch by setting
\(B(y,z)=B(y)\). Define
\begin{equation}\label{eq:eta_def}
\eta
:=
\chi\left(
\omega\times n+\Pi(Bu)
\right)
=
\chi\left(
\Pi(\omega\times n)+\Pi(Bu)
\right).
\end{equation}
By the Navier--slip boundary condition,
\[
n\times\omega=[Bu]_\tau
\quad\text{on }\partial\Omega,
\]
we have
\begin{equation}\label{eq:eta_boundary}
\eta|_{\partial\Omega}=0.
\end{equation}
Moreover, since
\[
\omega\times n
=
(\nabla u-(\nabla u)^T)n,
\]
we can rewrite \(\eta\) as
\begin{equation}\label{eq:eta_rewrite}
\eta
=
\chi\left[
\Pi(\partial_nu)
-
\Pi(\nabla(u\cdot n))
+
\Pi((\nabla n)^Tu)
+
\Pi(Bu)
\right].
\end{equation}
Therefore,
\begin{equation}\label{eq:normal_by_eta}
\|\chi\Pi(\partial_nu)\|_{\mathcal H^{m-1}}
\leq
C_{m+1}
\left(
\|\eta\|_{\mathcal H^{m-1}}
+
\|u\|_{\mathcal H^m}
\right).
\end{equation}
It remains to estimate \(\eta\).

\begin{lemma}\label{lem:eta_estimate}
Let \(m\geq 1\). Then it holds that
\begin{equation}\label{eq:eta_estimate}
\begin{aligned}
&\sup_{0\leq \tau\leq t}
\|\eta(\tau)\|_{\mathcal H^{m-1}}^2
+
\varepsilon
\int_0^t
\|\nabla\eta(\tau)\|_{\mathcal H^{m-1}}^2\,d\tau  \\
&\leq
C_{m+2}
\Bigg\{
\|\eta(0)\|_{\mathcal H^{m-1}}^2
+
\delta\varepsilon^2
\int_0^t
\|\nabla^2u(\tau)\|_{\mathcal H^{m-1}}^2\,d\tau  \\
&\qquad
+
C_\delta[1+P(Q(t))]
\int_0^t
P(\mathcal E_m(\tau))\,d\tau
\Bigg\}.
\end{aligned}
\end{equation}
\end{lemma}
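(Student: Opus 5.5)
We follow the strategy of Wang--Xin--Yong \cite[Lemma~3.7]{WangXinYong2015} and of Masmoudi--Rousset: derive a convection--diffusion equation for \(\eta\) with homogeneous Dirichlet data, then perform conormal energy estimates. The new ingredient is the electric force, which enters the vorticity equation only through \(\nabla\times(\rho\nabla\phi)/\rho\).

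\textbf{Step 1: the equation for \(\eta\).} Divide \eqref{eq:momentum_curl_form} by \(\rho\) and take the curl. The gradient terms \(\nabla p/\rho\) and \(\nabla\phi\) then produce
\[
\nabla\times\Big(\frac{\nabla p}{\rho}\Big)=\nabla\frac1\rho\times\nabla p,
\qquad
\nabla\times\nabla\phi=0 .
\]
Thus the electric field contributes nothing to the vorticity equation; it appears only in the equation for \(u\), which is needed for the term \(\Pi(Bu)\) in \(\eta\). Writing \(\Delta u=\nabla\diver u-\nabla\times\omega\) and using \(\nabla\times\nabla\diver u=0\), one gets
\[
\rho(\partial_t\omega+u\cdot\nabla\omega)-\mu\varepsilon\Delta\omega=F_\omega ,
\]
where \(F_\omega\) is built from \(\omega\cdot\nabla u\), \(\omega\,\diver u\), \(\nabla\rho\times(\cdots)\), and \(\nabla\rho^{-1}\times\nabla p\). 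The factor \(\rho\) in front of \(\varepsilon\Delta\) is handled by writing \(\varepsilon\mu\Delta\omega/\rho\) and treating the commutator \(\varepsilon\nabla\rho^{-1}\cdot\nabla\omega\) as a source.

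Combining this equation with the equation for \(u\), multiplied by \(\chi\Pi B\), yields
\[
\rho(\partial_t\eta+u\cdot\nabla\eta)-\mu\varepsilon\Delta\eta=F,
\qquad \eta|_{\partial\Omega}=0 .
\]
Here \(F\) collects: commutators of \(\Delta\) and \(u\cdot\nabla\) with \(\chi,\Pi,n,B\), which involve at most \(\varepsilon\nabla^2 u\) and \(\nabla u\) times smooth coefficients; the pressure terms \(\nabla\rho^{-1}\times\nabla p\times n\) and \(\chi\Pi B\nabla p\); the electric term \(\chi\Pi(B\rho\nabla\phi)/\rho\); and the viscous term \(\varepsilon\chi\Pi B\Delta u\).

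\textbf{Step 2: conormal energy estimates.} Apply \(Z^\alpha\), \(|\alpha|\le m-1\), to the equation and test with \(Z^\alpha\eta\). Since \(Z^\alpha\eta=0\) on \(\partial\Omega\), the dissipation \(\mu\varepsilon\|\nabla Z^\alpha\eta\|^2\) comes with no boundary term. The commutator \([Z^\alpha,\Delta]\eta\) contains \(Z_3\)-commutators with second normal derivatives. Following \cite{MasmoudiRousset2012,WangXinYong2015}, we integrate by parts once, which bounds it by
\[
\delta\varepsilon\|\nabla\eta\|_{\mathcal H^{m-1}}^2+C_\delta\varepsilon\|\nabla\eta\|_{\mathcal H^{m-2}}^2+\cdots,
\]
and close by induction on \(|\alpha|\).

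The source terms are estimated as follows.
\begin{itemize}[leftmargin=*]
\item The pressure terms are bounded by \(\|\nabla p\|_{\mathcal H^{m-1}}\). Pure-time components are bounded through \(\varepsilon\|\nabla\partial_t^{m-1}p\|^2\le\mathcal E_m\), or alternatively by integration by parts against \(Z^\alpha\eta\).
\item The electric source \(\|Z^\alpha(B\rho\nabla\phi)\|\) is bounded by \(C(Q)\|\nabla\phi\|_{\mathcal H^{m-1}}\) plus products controlled by Proposition~\ref{prop:conormal_product}, hence by \(\mathcal E_m\).
\item The viscous source \(\varepsilon\chi\Pi B\Delta u\) gives \(\varepsilon\|\nabla^2u\|_{\mathcal H^{m-1}}\|\eta\|_{\mathcal H^{m-1}}\). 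Young's inequality turns this into \(\delta\varepsilon^2\|\nabla^2 u\|^2_{\mathcal H^{m-1}}+C_\delta\mathcal E_m\).
\item The nonlinear terms \(Z^\alpha(\omega\cdot\nabla u)\) are bounded using \(Q\) and products; this is where \(P(\mathcal E_m)\) appears.
\end{itemize}

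\textbf{Main obstacle.} The hardest part is the transport and commutator terms containing \(\nabla^2 u\), or \(\nabla Z^{m-1}\) of \(u\) hitting a normal derivative, inside \(Z^\alpha F_\omega\). For example, \([Z^\alpha,u\cdot\nabla]\eta\) contains \(u\cdot N\,\partial_z Z^\beta\eta\). We plan to rewrite it as
\[
\frac{u\cdot N}{\varphi}\,Z_3Z^\beta\eta ,
\]
and then use the Hardy inequality, as in Lemma~\ref{lem:normal_divu_estimate}, to bound it by \(Q\) and \(\mathcal E_m\). Terms like \(\nabla\rho\times\partial_t u\) that contain \(\nabla p\) with \(m-1\) time derivatives are the delicate ones. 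For these we rely on the \(\varepsilon\)-weighted bound in \(\mathcal E_m\), together with the fact that they are tested against \(Z^\alpha\eta\) rather than differentiated.

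Finally, integrate in time, sum over \(\alpha\), and absorb the \(\delta\varepsilon\|\nabla\eta\|^2_{\mathcal H^{m-1}}\) terms into the dissipation. This gives \eqref{eq:eta_estimate}.
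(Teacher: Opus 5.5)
\textbf{Gap: the terms carrying \(\nabla\partial_t^{m-1}p\).} These terms (equivalently \(\nabla\partial_t^{m-1}\rho\)) arise in two places:
\begin{itemize}
\item from \(\chi\Pi(B\nabla p)\) when \(Z^\alpha=\partial_t^{m-1}\);
\item from the \(\nabla\rho\times(\cdots)\) sources when all \(m-1\) time derivatives fall on \(\nabla\rho\).
\end{itemize}
The energy \(\mathcal E_m\) contains this quantity only through \(\varepsilon\|\nabla\partial_t^{m-1}p\|_{L^2}^2\). Your first remedy therefore yields
\[
\int_0^t\|\nabla\partial_t^{m-1}p\|_{L^2}\,\|\partial_t^{m-1}\eta\|_{L^2}\,d\tau
\leq C\varepsilon^{-1/2}\int_0^t\mathcal E_m(\tau)\,d\tau ,
\]
which is not uniform in \(\varepsilon\).

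Your second remedy also fails. Integrating by parts against \(Z^\alpha\eta\) moves the gradient onto \(\partial_t^{m-1}\eta\), and this is controlled only through \(\varepsilon\int_0^t\|\nabla\eta\|_{\mathcal H^{m-1}}^2\). Young's inequality then leaves \(C_\delta\varepsilon^{-1}\int_0^t\|\partial_t^{m-1}p\|_{L^2}^2\).

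Dividing by \(\rho\) before taking the curl does not remove the problem either. The electric and pressure terms do drop out of the vorticity equation (indeed \(\nabla\rho^{-1}\times\nabla p\equiv0\) for \(p=\rho^\gamma\)). However, you pick up \(\varepsilon\nabla\rho^{-1}\times(\mu\Delta u+(\mu+\lambda)\nabla\diver u)\). When \(\partial_t^{m-1}\) falls on \(\nabla\rho^{-1}\), bounding this requires one of two things:
\begin{itemize}
\item \(\varepsilon^{1/2}\|\nabla^2u\|_{L^\infty}\), which is not part of \(Q\); or
\item the momentum equation in \(L^\infty\), which brings back the unweighted \(\|\nabla\partial_t^{m-1}\rho\|_{L^2}\).
\end{itemize}

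\textbf{The missing ingredient is Lemma~\ref{lem:grad_high_time_p}.} Solving the \(\partial_t^{m-1}\)-differentiated momentum equation for \(\nabla\partial_t^{m-1}p\) gives the unweighted bound \eqref{eq:grad_high_time_p_estimate}. Its right-hand side is \(\int_0^t\mathcal E_m\) plus \(C\varepsilon^2\int_0^t\|\nabla^2u\|_{\mathcal H^{m-1}}^2\), the latter coming from the viscous terms.

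The paper first applies Young's inequality, so these sources contribute \(\delta\int_0^t\|\nabla\partial_t^{m-1}p\|_{L^2}^2+C_\delta\int_0^t\|\eta\|_{\mathcal H^{m-1}}^2\). It then inserts \eqref{eq:grad_high_time_p_estimate}. This is done explicitly for \(\mathcal T(\nabla\rho\times\nabla\phi)\) in \eqref{eq:eta_electric_energy_estimate}, and in the same way for the corresponding terms of \(F_\eta^0\). Together with the viscous sources, this is where the term \(\delta\varepsilon^2\int_0^t\|\nabla^2u\|_{\mathcal H^{m-1}}^2\) in \eqref{eq:eta_estimate} comes from.

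The rest of your plan matches the paper's argument: the Dirichlet condition for \(\eta\), the \([Z^\alpha,\Delta]\) commutator, the Hardy treatment of the transport commutator, and Young's inequality on the viscous source.
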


\begin{proof}
Set
\[
\mathcal T f:=\chi\Pi(f\times n),
\qquad
\mathcal B f:=\chi\Pi(Bf),
\qquad
\mathcal L:=\rho\partial_t+\rho u\cdot\nabla-\mu\varepsilon\Delta .
\]
Then
\[
\eta=\mathcal T\omega+\mathcal B u .
\]
Taking curl to \eqref{eq:momentum_curl_form}, one obtains
\begin{equation}\label{eq:vorticity_equation_eta}
\rho\omega_t+\rho u\cdot\nabla\omega-\mu\varepsilon\Delta\omega
=
F_\omega^0+\nabla\rho\times\nabla\phi,
\end{equation}
where
\[
F_\omega^0
:=
-\nabla\rho\times u_t
-\nabla\rho\times(u\cdot\nabla u)
+\rho(\omega\cdot\nabla)u
-\rho(\diver u)\omega .
\]
Using \eqref{eq:vorticity_equation_eta} and \eqref{eq:momentum_curl_form}, we
get
\begin{equation}\label{eq:eta_equation}
\begin{aligned}
\rho\eta_t+\rho u\cdot\nabla\eta-\mu\varepsilon\Delta\eta
&=
F_\eta^0+F_\eta^E,
\qquad
\eta|_{\partial\Omega}=0,
\end{aligned}
\end{equation}
where
\begin{equation}\label{eq:eta_sources}
\begin{aligned}
F_\eta^0
&:=
\mathcal T F_\omega^0
+
\mathcal B\left(
-\nabla p+(\mu+\lambda)\varepsilon\nabla\diver u
\right)
+
\mathcal R_\eta,  \\
F_\eta^E
&:=
\mathcal T(\nabla\rho\times\nabla\phi)
+
\mathcal B(\rho\nabla\phi),
\end{aligned}
\end{equation}
and
\[
\mathcal R_\eta
:=
[\mathcal L,\mathcal T]\omega
+
[\mathcal L,\mathcal B]u .
\]
Here
\[
[\mathcal L,\mathcal T]\omega
=
\mathcal L(\mathcal T\omega)-\mathcal T(\mathcal L\omega),
\qquad
[\mathcal L,\mathcal B]u
=
\mathcal L(\mathcal Bu)-\mathcal B(\mathcal Lu).
\]

Applying \(Z^\alpha\), \(|\alpha|\leq m-1\), to
\eqref{eq:eta_equation}, we obtain
\begin{equation}\label{eq:eta_conormal_equation}
\begin{aligned}
\rho\partial_t Z^\alpha\eta
+
\rho u\cdot\nabla Z^\alpha\eta
-
\mu\varepsilon\Delta Z^\alpha\eta
=
Z^\alpha F_\eta^0
+
Z^\alpha F_\eta^E
+
\mathcal C_\eta^\alpha ,
\end{aligned}
\end{equation}
where
\begin{equation}\label{eq:eta_commutator}
\mathcal C_\eta^\alpha
:=
-[Z^\alpha,\rho]\eta_t
-
[Z^\alpha,\rho u\cdot\nabla]\eta
+
\mu\varepsilon[Z^\alpha,\Delta]\eta .
\end{equation}
Multiplying \eqref{eq:eta_conormal_equation} by \(Z^\alpha\eta\) and
integrating over \(\Omega\), using \(\eta|_{\partial\Omega}=0\), gives
\begin{equation}\label{eq:eta_energy_identity}
\begin{aligned}
&\frac12\frac{d}{dt}
\int_\Omega
\rho|Z^\alpha\eta|^2\,dx
+
\mu\varepsilon
\|\nabla Z^\alpha\eta\|_{L^2}^2  \\
&=
\int_\Omega
Z^\alpha F_\eta^0\cdot Z^\alpha\eta\,dx
+
\int_\Omega
Z^\alpha F_\eta^E\cdot Z^\alpha\eta\,dx
+
\int_\Omega
\mathcal C_\eta^\alpha\cdot Z^\alpha\eta\,dx .
\end{aligned}
\end{equation}

By Proposition~\ref{prop:conormal_product},
Proposition~\ref{prop:anisotropic_trace}, and Young's inequality, together
with the estimates obtained above,
\begin{equation}\label{eq:eta_standard_source_estimate}
\begin{aligned}
&\sum_{|\alpha|\leq m-1}
\int_0^t
\left|
\int_\Omega
\left(
Z^\alpha F_\eta^0+\mathcal C_\eta^\alpha
\right)\cdot Z^\alpha\eta\,dx
\right|d\tau  \\
&\leq
\delta\varepsilon
\int_0^t
\|\nabla\eta(\tau)\|_{\mathcal H^{m-1}}^2\,d\tau
+
\delta\varepsilon^2
\int_0^t
\|\nabla^2u(\tau)\|_{\mathcal H^{m-1}}^2\,d\tau  \\
&\quad
+
C_\delta C_{m+2}[1+P(Q(t))]
\int_0^t
P(\mathcal E_m(\tau))\,d\tau .
\end{aligned}
\end{equation}

It remains to estimate the electric force term. The highest pure time
derivative of \(\nabla\rho\) must be retained, just as in
Lemma~\ref{lem:normal_divu_estimate}. The time-integrated product estimate
and the Poisson estimate give
\begin{equation}\label{eq:eta_electric_source_bound}
\begin{aligned}
\int_0^t\|F_\eta^E\|_{\mathcal H^{m-1}}^2\,d\tau
\leq{}&C_{m+2}[1+P(Q(t))]\int_0^t
\left(\|\nabla\partial_t^{m-1}p\|_{L^2}^2
+P(\mathcal E_m(\tau))\right)\,d\tau.
\end{aligned}
\end{equation}
More precisely, the only derivatives not controlled by \(\mathcal E_m\)
occur in
\(\mathcal T(\nabla\partial_t^{m-1}\rho\times\nabla\phi)\).
Since \(p=\rho^\gamma\), their leading part is a bounded coefficient
times \(\nabla\partial_t^{m-1}p\). Also
\(\|\eta\|_{\mathcal H^{m-1}}^2\leq C\mathcal E_m\).
Estimating this leading part by Young's inequality before using
\eqref{eq:grad_high_time_p_estimate}, we obtain
\begin{equation}\label{eq:eta_electric_energy_estimate}
\begin{aligned}
&\sum_{|\alpha|\leq m-1}\int_0^t
\left|\int_\Omega Z^\alpha F_\eta^E\cdot Z^\alpha\eta\,dx\right|d\tau\\
&\leq\delta\int_0^t\|\nabla\partial_t^{m-1}p\|_{L^2}^2\,d\tau+C_\delta C_{m+2}[1+P(Q(t))]
\int_0^tP(\mathcal E_m(\tau))\,d\tau\\
&\leq C\delta\varepsilon^2\int_0^t
\|\nabla^2u\|_{\mathcal H^{m-1}}^2\,d\tau
+C_\delta C_{m+2}[1+P(Q(t))]
\int_0^tP(\mathcal E_m(\tau))\,d\tau .
\end{aligned}
\end{equation}

Summing \eqref{eq:eta_energy_identity} over \(|\alpha|\leq m-1\), integrating
over \([0,t]\), and using \eqref{eq:eta_standard_source_estimate} and
\eqref{eq:eta_electric_energy_estimate}, we obtain
\[
\begin{aligned}
&\|\eta(t)\|_{\mathcal H^{m-1}}^2
+
\varepsilon
\int_0^t
\|\nabla\eta(\tau)\|_{\mathcal H^{m-1}}^2\,d\tau  \\
&\leq
C_{m+2}
\left(
\|\eta(0)\|_{\mathcal H^{m-1}}^2
\right)
+
\delta\varepsilon^2
\int_0^t
\|\nabla^2u(\tau)\|_{\mathcal H^{m-1}}^2\,d\tau  \\
&\quad
+
C_\delta C_{m+2}[1+P(Q(t))]
\int_0^t
P(\mathcal E_m(\tau))\,d\tau
+
C
\int_0^t
\|\eta(\tau)\|_{\mathcal H^{m-1}}^2\,d\tau .
\end{aligned}
\]
Taking \(\delta>0\) sufficiently small and applying Gronwall's inequality gives
\eqref{eq:eta_estimate}. This completes the proof.
\end{proof}

It follows from \eqref{eq:normal_div_decomposition}--\eqref{eq:normal_derivative_reduction}
that
\begin{equation}\label{eq:Zbeta_grad_u_H1_control}
\sum_{|\beta|\leq m-2}
\|Z^\beta\nabla u\|_1^2
\leq
C_{m+1}
\left(
\|u\|_{\mathcal H^m}^2
+
\|\eta\|_{\mathcal H^{m-1}}^2
+
\sum_{k=0}^{m-2}
\|\partial_t^k\diver u\|_{m-1-k}^2
\right).
\end{equation}
Moreover,
\begin{equation}\label{eq:nabla2_u_Hm1_control}
\begin{aligned}
\int_0^t
\|\nabla^2u(\tau)\|_{\mathcal H^{m-1}}^2\,d\tau
\leq{}&
C_{m+2}
\int_0^t
\Big(
\|\nabla u(\tau)\|_{\mathcal H^m}^2
+
\|\nabla\eta(\tau)\|_{\mathcal H^{m-1}}^2  \\
&\qquad
+
\|\nabla\diver u(\tau)\|_{\mathcal H^{m-1}}^2
+
\mathcal E_m(\tau)
\Big)\,d\tau .
\end{aligned}
\end{equation}

Similarly,
\begin{equation}\label{eq:nabla2_dtku_control}
\begin{aligned}
&\sum_{k=0}^{m-2}
\int_0^t
\|\nabla^2\partial_t^k u(\tau)\|_{m-1-k}^2\,d\tau  \\
&\leq
C_{m+2}
\Bigg\{
\int_0^t
\left(
\|\nabla u(\tau)\|_{\mathcal H^m}^2
+
\|\nabla\eta(\tau)\|_{\mathcal H^{m-1}}^2
\right)d\tau  \\
&\qquad\qquad
+
\sum_{k=0}^{m-2}
\int_0^t
\|\partial_t^k\nabla\diver u(\tau)\|_{m-1-k}^2\,d\tau
+
\int_0^t
\mathcal E_m(\tau)\,d\tau
\Bigg\}.
\end{aligned}
\end{equation}
In addition,
\begin{equation}\label{eq:epsilon_nabla2_Zm2_u_control}
\varepsilon
\int_0^t
\|\nabla^2 Z^{m-2}u(\tau)\|_{L^2}^2\,d\tau
\leq
C_{m+1}\varepsilon
\int_0^t
\|\nabla\eta(\tau)\|_{\mathcal H^{m-2}}^2\,d\tau
+
C_{m+1}
\int_0^t
\mathcal E_m(\tau)\,d\tau .
\end{equation}

Taking \(\delta>0\) sufficiently small and using
\eqref{eq:high_order_conormal_estimate},
\eqref{eq:divu_high_conormal_estimate},
\eqref{eq:pure_time_divu_estimate},
\eqref{eq:pure_time_divu_without_epsilon},
\eqref{eq:grad_high_time_p_estimate},
\eqref{eq:normal_divu_weighted_closure},
\eqref{eq:eta_estimate}, and
\eqref{eq:Zbeta_grad_u_H1_control}--\eqref{eq:epsilon_nabla2_Zm2_u_control},
we obtain
\begin{equation}\label{eq:L2_closing_estimate}
\begin{aligned}
&\sup_{0\leq \tau\leq t}
\Bigg\{
\mathcal E_m(\tau)
+
\|\eta(\tau)\|_{\mathcal H^{m-1}}^2
+
\varepsilon
\left\|
\left(
\partial_t^{m-1}\diver u,
\partial_t^{m-1}\nabla p
\right)(\tau)
\right\|_{L^2}^2
\Bigg\}  \\
&\quad
+
\varepsilon
\int_0^t
\|\nabla\eta(\tau)\|_{\mathcal H^{m-1}}^2\,d\tau
+
\varepsilon
\int_0^t
\|\nabla u(\tau)\|_{\mathcal H^m}^2\,d\tau  \\
&\quad
+
\varepsilon
\sum_{k=0}^{m-2}
\int_0^t
\|\partial_t^k\nabla\diver u(\tau)\|_{m-1-k}^2\,d\tau
+
\varepsilon^2
\int_0^t
\|\partial_t^{m-1}\nabla\diver u(\tau)\|_{L^2}^2\,d\tau  \\
&\quad
+
\varepsilon
\sum_{k=0}^{m-2}
\int_0^t
\|\nabla^2\partial_t^k u(\tau)\|_{m-1-k}^2\,d\tau
+
\varepsilon^2
\int_0^t
\|\nabla^2u(\tau)\|_{\mathcal H^{m-1}}^2\,d\tau  \\
&\quad
+
\int_0^t
\|\nabla\partial_t^{m-1}p(\tau)\|_{L^2}^2\,d\tau  \\
&\leq
C_{m+2}
\left\{
\mathcal E_m(0)
+
[1+P(Q(t))]
\int_0^t
P(\mathcal E_m(\tau))\,d\tau
\right\}.
\end{aligned}
\end{equation}
\subsection{\texorpdfstring{\(L^\infty\)-estimates}{L-infinity estimates}}

\begin{lemma}\label{lem:Linfty_basic}
Let \(m\geq6\). Then the following estimates hold:
\begin{equation}\label{eq:Linfty_lnrho_p_u}
\|Z^\alpha(\ln\rho,p,u)(t)\|_{L^\infty}^2
\leq
C P\left(\Lambda_m^1(t)\right),
\qquad
m\geq 2+|\alpha|.
\end{equation}
Moreover,
\begin{equation}\label{eq:Linfty_grad_lnrho_p}
\|\nabla(\ln\rho,p)(t)\|_{\mathcal H^{1,\infty}}^2
\leq
C_3
\left[
P\left(\Lambda_m^1(t)\right)
+
P\left(\|\Delta p(t)\|_{\mathcal H^1}^2\right)
\right],
\qquad
m\geq5,
\end{equation}
and
\begin{equation}\label{eq:Linfty_grad_phi}
\|\nabla\phi(t)\|_{\mathcal H^{1,\infty}}^2
\leq
C_3 P\left(\mathcal E_m(t)\right),
\qquad
m\geq5.
\end{equation}
Furthermore,
\begin{equation}\label{eq:Linfty_divu}
\|\diver u(t)\|_{\mathcal H^{1,\infty}}^2
\leq
C_3
\left[
P\left(\Lambda_m^1(t)\right)
+
P\left(\|\Delta p(t)\|_{\mathcal H^1}^2\right)
\right],
\qquad
m\geq5,
\end{equation}
\begin{equation}\label{eq:Linfty_grad_divu_Linfty}
\|\nabla\diver u(t)\|_{L^\infty}^2
\leq
C_3 P(Q(t)),
\end{equation}
and, for any sufficiently small \(\delta>0\),
\begin{equation}\label{eq:Linfty_grad_divu_H1infty}
\begin{aligned}
\|\nabla\diver u(t)\|_{\mathcal H^{1,\infty}}^2
\leq
C_4[1+P(Q(t))]
\bigg[
&C_\delta P\left(\Lambda_m^1(t)\right)
+
C_\delta P\left(\|\Delta p(t)\|_{\mathcal H^1}^2\right)  \\
&+
\delta\|\Delta p(t)\|_{\mathcal H^2}^2
\bigg],
\qquad
m\geq6 .
\end{aligned}
\end{equation}
Consequently,
\begin{equation}\label{eq:Q_reduction}
\begin{aligned}
Q(t)
\leq
C_4
\sup_{0\leq \tau\leq t}
\Big\{
&P\left(\Lambda_m^1(\tau)\right)
+
P\left(\mathcal E_m(\tau)\right)
+
P\left(\|\Delta p(\tau)\|_{\mathcal H^1}^2\right)  \\
&+
\|\nabla u(\tau)\|_{\mathcal H^{1,\infty}}^2
+
\delta\|\Delta p(\tau)\|_{\mathcal H^2}^2
\Big\}.
\end{aligned}
\end{equation}
\end{lemma}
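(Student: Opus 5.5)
The proof will treat the six bounds one at a time. The main tools are the anisotropic embedding \eqref{eq:anisotropic_sobolev} of Proposition~\ref{prop:anisotropic_trace}, the Poisson estimates of Proposition~\ref{prop:poisson_neumann_estimate}, and the pressure relation \eqref{eq:pressure_div_relation}. Throughout we use $0<c_0\le\rho\le 1/c_0$, so $\ln\rho=\gamma^{-1}\ln p$ and $\nabla\ln\rho=\nabla p/(\gamma p)$. Composition with smooth functions of $p$ is then harmless in conormal norms.

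\textbf{The bound \eqref{eq:Linfty_lnrho_p_u}.} Apply \eqref{eq:anisotropic_sobolev} to $f=Z^\alpha(p,u)$ with $m_2=m_1=2$ (any split with $m_1+m_2=4\ge3$ also works). The factor $\|\nabla Z^\alpha f\|_{H^2_{co}}$ is bounded by $\sum_{|\beta|\le m-2}\|Z^\beta\nabla(p,u)\|_1$ plus commutators, provided $|\alpha|+3\le m$, i.e. $|\alpha|\le m-3$. In the borderline case $|\alpha|=m-2$ we instead choose $m_2=1,m_1=2$, which costs one fewer derivative. Also $\|Z^\alpha f\|_{H^{m_1}_{co}}\lesssim\|(p,u)\|_{\mathcal H^m}$. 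Everything is then controlled by $\Lambda_m^1$. For $\ln\rho$ we use the chain rule and absorb the resulting products, which contain no extra derivatives, into $P(\Lambda_m^1)$. Time derivatives are included because $\Lambda_m^1$ contains space–time conormal norms.

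\textbf{The bounds \eqref{eq:Linfty_grad_lnrho_p} and \eqref{eq:Linfty_grad_phi}.} For $\nabla p$ with up to one conormal derivative, the embedding would require two normal derivatives of $p$. Here we use the elliptic structure. In a boundary patch, $\partial_z^2p$ is recovered from $\Delta p$ modulo tangential terms $Z_yZ_yp$ and $Z_y\partial_zp$ and first-order terms. Hence $\|\nabla Z^\beta p\|_{H^{1}_{co}}+\|\nabla^2Z^\beta p\|_{L^2}$, for $|\beta|\le1$, is bounded by $\|\Delta p\|_{\mathcal H^1}+\Lambda_m^1$. With \eqref{eq:anisotropic_sobolev} for $f=Z^\beta\nabla p$, $m_2=1$, $m_1=2$, this gives \eqref{eq:Linfty_grad_lnrho_p} for $m\ge5$; the range $m\ge5$ is needed so that the conormal factor is in $\Lambda_m^1$.

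For $\nabla\phi$ the same argument works with $\Delta\phi=\rho-1$. Proposition~\ref{prop:poisson_neumann_estimate} at level $s\le m-1$ gives $\|\nabla^2\phi\|_{\mathcal H^{m-1}}\lesssim\|\rho-1\|_{\mathcal H^{m-1}}+\|\nabla\phi\|_{L^2}\le P(\mathcal E_m)$. Then \eqref{eq:anisotropic_sobolev} applied to $Z^\beta\nabla\phi$ with $m_1=m_2=2$ yields \eqref{eq:Linfty_grad_phi}.

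\textbf{The bounds \eqref{eq:Linfty_divu}, \eqref{eq:Linfty_grad_divu_Linfty} and \eqref{eq:Linfty_grad_divu_H1infty}.} Write $\diver u=-(\gamma p)^{-1}(p_t+u\cdot\nabla p)$, so that
\[
\nabla\diver u=-\nabla\big((\gamma p)^{-1}p_t\big)-\nabla\big((\gamma p)^{-1}u\cdot\nabla p\big).
\]
For \eqref{eq:Linfty_divu} we bound $Z^\beta p_t$ by \eqref{eq:Linfty_lnrho_p_u} and $u\cdot\nabla p$ by \eqref{eq:Linfty_lnrho_p_u} together with \eqref{eq:Linfty_grad_lnrho_p}. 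The bound \eqref{eq:Linfty_grad_divu_Linfty} is direct from the definition of $Q$: it contains $\|\nabla p_t\|_{L^\infty}\le\|\nabla p\|_{\mathcal H^{1,\infty}}$ and $\|\nabla u\|_{L^\infty}$, plus the term $u\cdot\nabla^2p$. In that term the normal–normal part carries $u\cdot n$, which vanishes on the boundary. The Hardy-type trick of Lemma~\ref{lem:normal_divu_estimate} then converts $(u\cdot N)\partial_z^2p$ into $\frac{u\cdot N}{\varphi}Z_3\partial_zp$, which is bounded by $\|\nabla u\|_{L^\infty}\|\nabla p\|_{\mathcal H^{1,\infty}}$.

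\textbf{Main obstacle.} The hard step is \eqref{eq:Linfty_grad_divu_H1infty}. One conormal derivative of $\nabla\diver u$ requires $\|Z^\beta\nabla p_t\|_{L^\infty}$ and $\|Z^\beta\nabla^2p\|_{L^\infty}$ for $|\beta|\le1$. By the embedding with $m_2=1$, this needs $\nabla^2 Z^\beta p\in H^1_{co}$, i.e. control of $\|\Delta p\|_{\mathcal H^2}$ via the same elliptic recovery as above. To avoid a full power of that norm, interpolate: \eqref{eq:anisotropic_sobolev} gives $\|g\|_{L^\infty}^2\lesssim(\|\nabla g\|+\|g\|)_{H^{m_2}_{co}}\|g\|_{H^{m_1}_{co}}$. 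The first factor is linear in $\|\Delta p\|_{\mathcal H^2}$, and Young's inequality produces $\delta\|\Delta p\|_{\mathcal H^2}^2+C_\delta(\cdots)^2$, with the remaining factor bounded by $P(\Lambda_m^1)+P(\|\Delta p\|_{\mathcal H^1}^2)$. The coefficient $u$ again needs the vanishing normal component to handle $u\cdot\nabla Z^\beta\nabla p$. This is where the prefactor $1+P(Q)$ and the constant $C_4$ enter, since the cut-off and the Hardy factor $\frac{u\cdot N}{\varphi}$ involve one more derivative of the geometry.

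\textbf{The bound \eqref{eq:Q_reduction}.} Collect terms. The quantity $\|(p,u,p_t,u_t)\|_{L^\infty}$ is bounded by \eqref{eq:Linfty_lnrho_p_u} with $|\alpha|\le1$. The gradient $\nabla p$ is bounded by \eqref{eq:Linfty_grad_lnrho_p}, $\nabla\phi$ by \eqref{eq:Linfty_grad_phi}, and $\|\nabla u\|_{\mathcal H^{1,\infty}}$ is kept as is. Taking the supremum over $\tau\le t$ gives \eqref{eq:Q_reduction}. The term $\delta\|\Delta p\|^2_{\mathcal H^2}$ is included for uniformity with \eqref{eq:Linfty_grad_divu_H1infty}.
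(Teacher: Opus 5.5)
Your proposal follows the paper's proof essentially step by step. The shared ingredients are:
\begin{itemize}
\item the anisotropic embedding \eqref{eq:anisotropic_sobolev} for the low-order bounds;
\item recovery of the normal second derivatives of $p$ from $\Delta p$ in a boundary patch;
\item the Poisson estimate for $\nabla\phi$;
\item the pressure relation for $\diver u$;
\item Young's inequality to split off $\delta\|\Delta p\|_{\mathcal H^2}^2$.
\end{itemize}
Where you go beyond the paper, you are more careful than it is. In \eqref{eq:Linfty_graddivu_proof} the paper bounds $Z^\alpha\nabla\diver u$ through $\|\nabla^2p\|_{\mathcal H^{1,\infty}}$. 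That norm contains $\|\partial_z^2p\|_{L^\infty}$, essentially $\|\Delta p\|_{L^\infty}$, which the right-hand side does not control. Your use of $u\cdot n=0$, writing $(u\cdot\nabla)\nabla p=u_y\cdot\nabla_y\nabla p+\frac{u\cdot N}{\varphi}Z_3\nabla p$, reduces everything to $\|\nabla p\|_{\mathcal H^{2,\infty}}$ and $\|\nabla u\|_{\mathcal H^{1,\infty}}$. This is the right way to do this step, and also the step for $\|\nabla\diver u\|_{L^\infty}$. Make this decomposition \emph{before} applying $Z^\beta$. Then, when the derivative falls on the coefficient, it lands on the scalar $u\cdot N$, all of whose conormal derivatives still vanish on $\partial\Omega$. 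It must not land on $Z^\beta u$: for tangential $Z_y$, $(Z_yu)\cdot n=-u\cdot Z_yn$ need not vanish on a curved boundary.

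One bookkeeping point must be fixed; the paper's display \eqref{eq:Linfty_gradp_framework} has the same mismatch. The split $(m_1,m_2)=(2,1)$ does not fit what you actually control.
\begin{itemize}
\item For $f=Z^\beta\nabla p$ with $|\beta|=1$, the factor $\|\nabla f\|_{H^1_{co}}$ involves $\nabla^2p$ with two conormal derivatives, hence $\|\Delta p\|_{\mathcal H^2}$. You only bounded $\|\nabla^2Z^\beta p\|_{L^2}$ by $\|\Delta p\|_{\mathcal H^1}+(\Lambda_m^1)^{1/2}$.
\item In the step for \eqref{eq:Linfty_grad_divu_H1infty}, the embedding with $m_2=1$ applied to $Z^\beta\nabla p_t$ would need three conormal derivatives of $\nabla^2p$, i.e.\ $\|\Delta p\|_{\mathcal H^3}$.
\end{itemize}
Use $(m_1,m_2)=(3,0)$ in both places. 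Then the first factor satisfies $\|\nabla Z^\gamma\nabla p\|_{L^2}+\|Z^\gamma\nabla p\|_{L^2}\leq C\big(\|\Delta p\|_{\mathcal H^{|\gamma|}}+(\Lambda_m^1)^{1/2}\big)$, which is exactly what the elliptic recovery provides. The second factor $\|Z^\gamma\nabla p\|_{H^3_{co}}$ needs $|\gamma|+3\leq m-1$ conormal derivatives of $\nabla p$. This is precisely where the thresholds $m\geq5$ (for $|\gamma|=1$) and $m\geq6$ (for $|\gamma|=2$) in the statement come from. With this choice, your Young's inequality argument producing $\delta\|\Delta p\|_{\mathcal H^2}^2+C_\delta P(\Lambda_m^1)$ goes through unchanged.
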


\begin{proof}
We first estimate the lower-order \(L^\infty\)-norms. By
Proposition~\ref{prop:anisotropic_trace}, for \(m\geq2+|\alpha|\),
\begin{equation}\label{eq:Linfty_basic_step}
\begin{aligned}
\|Z^\alpha(\ln\rho,p,u)\|_{L^\infty}^2
&\leq
C
\left(
\|\nabla Z^\alpha(\ln\rho,p,u)\|_{\mathcal H^1}
+
\|Z^\alpha(\ln\rho,p,u)\|_{\mathcal H^1}
\right)  \\
&\qquad\qquad
\times
\|Z^\alpha(\ln\rho,p,u)\|_{\mathcal H^2}
\leq
C P\left(\Lambda_m^1(t)\right).
\end{aligned}
\end{equation}
where \(p=\rho^\gamma\) and Proposition~\ref{prop:conormal_product} have been
used. This proves \eqref{eq:Linfty_lnrho_p_u}.

Next, applying the same anisotropic Sobolev estimate to
\(Z^\alpha\nabla p\), \(|\alpha|\leq1\), gives
\begin{equation}\label{eq:Linfty_gradp_framework}
\begin{aligned}
\|Z^\alpha\nabla p\|_{L^\infty}^2
&\leq
C
\left(
\|\nabla Z^\alpha\nabla p\|_{\mathcal H^1}
+
\|Z^\alpha\nabla p\|_{\mathcal H^1}
\right)
\|Z^\alpha\nabla p\|_{\mathcal H^2}  \\
&\leq
C_3
\left[
P\left(\Lambda_m^1(t)\right)
+
P\left(\|\Delta p(t)\|_{\mathcal H^1}^2\right)
\right].
\end{aligned}
\end{equation}
Here the non-conormal second-order derivatives of \(p\) are recovered from
\(\Delta p\) and tangential derivatives in local coordinates, namely
\[
\|\nabla^2p\|_{\mathcal H^1}
\leq
C_3
\left(
\|\Delta p\|_{\mathcal H^1}
+
\|\nabla p\|_{\mathcal H^1}
+
\|p\|_{\mathcal H^3}
\right).
\]
Together with \(p=\rho^\gamma\), this yields
\eqref{eq:Linfty_grad_lnrho_p}.

For the electric potential, Proposition~\ref{prop:anisotropic_trace} and Proposition~\ref{prop:poisson_neumann_estimate} imply
\begin{equation}\label{eq:Linfty_grad_phi_proof}
\begin{aligned}
\|Z^\alpha\nabla\phi\|_{L^\infty}^2
&\leq
C
\left(
\|\nabla Z^\alpha\nabla\phi\|_{\mathcal H^1}
+
\|Z^\alpha\nabla\phi\|_{\mathcal H^1}
\right)
\|Z^\alpha\nabla\phi\|_{\mathcal H^2}  \\
&\leq
C_3 P\left(\mathcal E_m(t)\right),
\qquad |\alpha|\leq1,
\end{aligned}
\end{equation}
which gives \eqref{eq:Linfty_grad_phi}.

We now estimate \(\diver u\). From \eqref{eq:pressure_div_relation},
\begin{equation}\label{eq:divu_lnrho_relation_Linfty}
\diver u
=
-(\ln\rho)_t
-
u\cdot\nabla\ln\rho .
\end{equation}
Using Proposition~\ref{prop:conormal_product},
\eqref{eq:Linfty_lnrho_p_u}, and \eqref{eq:Linfty_grad_lnrho_p}, we obtain
\begin{equation}\label{eq:Linfty_divu_proof}
\begin{aligned}
\|\diver u(t)\|_{\mathcal H^{1,\infty}}^2
&\leq
C
\left(
\|(\ln\rho)_t(t)\|_{\mathcal H^{1,\infty}}^2
+
\|u(t)\cdot\nabla\ln\rho(t)\|_{\mathcal H^{1,\infty}}^2
\right)  \\
&\leq
C_3
\left[
P\left(\Lambda_m^1(t)\right)
+
P\left(\|\Delta p(t)\|_{\mathcal H^1}^2\right)
\right],
\end{aligned}
\end{equation}
and hence \eqref{eq:Linfty_divu}. Differentiating
\eqref{eq:divu_lnrho_relation_Linfty} once and using the preceding estimates
gives
\[
\|\nabla\diver u(t)\|_{L^\infty}^2
\leq
C_3P(Q(t)),
\]
which is \eqref{eq:Linfty_grad_divu_Linfty}.

It remains to estimate \(\nabla\diver u\) in
\(\mathcal H^{1,\infty}\). Differentiating
\eqref{eq:divu_lnrho_relation_Linfty}, we have
\[
\nabla\diver u
=
-\nabla(\ln\rho)_t
-
(\nabla u)\nabla\ln\rho
-
u\cdot\nabla^2\ln\rho .
\]
Applying \(Z^\alpha\), \(|\alpha|\leq1\), and using
Proposition~\ref{prop:anisotropic_trace}, Proposition~\ref{prop:conormal_product},
and Young's inequality, we get
\begin{equation}\label{eq:Linfty_graddivu_proof}
\begin{aligned}
\|\nabla\diver u(t)\|_{\mathcal H^{1,\infty}}^2
&\leq
C_4[1+P(Q(t))]
\left[
P\left(\Lambda_m^1(t)\right)
+
\|\nabla^2p(t)\|_{\mathcal H^{1,\infty}}^2
\right]  \\
&\leq
C_4[1+P(Q(t))]
C_\delta
\left\{
P\left(\Lambda_m^1(t)\right)
+
P\left(\|\Delta p(t)\|_{\mathcal H^1}^2\right)
\right\}  \\
&\quad
+
C_4\delta[1+P(Q(t))]
\|\Delta p(t)\|_{\mathcal H^2}^2 .
\end{aligned}
\end{equation}

This proves \eqref{eq:Linfty_grad_divu_H1infty}. Finally, \eqref{eq:Q_reduction} follows from
\eqref{eq:Linfty_lnrho_p_u}--\eqref{eq:Linfty_grad_divu_H1infty}. This completes the proof.
\end{proof}

\begin{lemma}\label{lem:Linfty_grad_u}
Let \(m\geq 6\). Then
\begin{equation}\label{eq:Linfty_grad_u_estimate}
\begin{aligned}
\|\nabla u(t)\|_{\mathcal H^{1,\infty}}^2
&\leq
C_{m+2}\Bigg(
\|(u_0,\nabla u_0)\|_{\mathcal H^{1,\infty}}^2
+
P(\Lambda_m^1(t))
+
P\left(\|\Delta p(t)\|_{\mathcal H^1}^2\right)  \\
&\quad
+
\varepsilon^2 t
\int_0^t
\|\nabla^2u(\tau)\|_{\mathcal H^4}^2\,d\tau
+
t\int_0^t
\left[1+P(\Lambda_m^1(\tau))+P(Q(\tau))\right]  \\
&\qquad
\times
\left[1+\varepsilon^2\|\Delta p(\tau)\|_{\mathcal H^2}^2\right]\,d\tau
\Bigg).
\end{aligned}
\end{equation}
\end{lemma}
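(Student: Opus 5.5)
We follow the strategy of Masmoudi--Rousset and Wang--Xin--Yong for the Lipschitz bound on the velocity. Away from the boundary, i.e.\ on the support of \(1-\chi\), the conormal norms control the standard Sobolev norms. So \(\|(1-\chi)\nabla u\|_{\mathcal H^{1,\infty}}^2\) follows from the anisotropic embedding of Proposition~\ref{prop:anisotropic_trace} together with \(\Lambda_m^1(t)\), since \(m\geq6\) gives enough conormal regularity. Near the boundary we use the same decomposition as in the normal derivative estimates. The tangential derivatives \(Z_{y_i}u\) are bounded in \(\mathcal H^{1,\infty}\) by \(\|u\|_{\mathcal H^{2,\infty}}\), hence by \(P(\Lambda_m^1)\) through \eqref{eq:Linfty_lnrho_p_u}. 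The normal component \(\partial_nu\cdot n\) is recovered from \(\diver u\) through \eqref{eq:normal_div_decomposition}, and \eqref{eq:Linfty_divu} controls it by \(P(\Lambda_m^1)+P(\|\Delta p\|_{\mathcal H^1}^2)\). By \eqref{eq:eta_rewrite}, the remaining piece \(\Pi\partial_nu\) is reduced to \(\|\eta\|_{\mathcal H^{1,\infty}}\) plus \(\|u\|_{\mathcal H^{2,\infty}}\).

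Thus everything reduces to an \(L^\infty\) estimate for \(\eta\) and \(Z\eta\) with \(|Z|\leq1\), using the parabolic equation \eqref{eq:eta_equation} with the Dirichlet condition \(\eta|_{\partial\Omega}=0\). We write \eqref{eq:eta_equation} in the local coordinates \((y,z)\) and freeze the leading transport and diffusion. The equation becomes \(\partial_t\eta+u\cdot\nabla\eta-\frac{\mu\varepsilon}{\rho}\partial_{zz}\eta=G\), where \(G\) collects the following: the sources \(F_\eta^0/\rho\) and \(F_\eta^E/\rho\); the tangential and mixed second derivatives of \(\eta\) multiplied by \(\varepsilon\); and the lower-order geometric terms coming from \(\Delta\) in curved coordinates. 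Following \cite{MasmoudiRousset2012}, we then use the maximum principle for the one-dimensional heat operator on the half-line with Dirichlet data, or equivalently the explicit Green's function of a model convection--diffusion equation. This gives
\[
\|\eta(t)\|_{\mathcal H^{1,\infty}}\leq C\Big(\|\eta(0)\|_{\mathcal H^{1,\infty}}+\int_0^t\big(\|G\|_{\mathcal H^{1,\infty}}+\text{commutators}\big)\,d\tau\Big),
\]
and the Cauchy--Schwarz inequality in time produces the factor \(t\int_0^t\) in \eqref{eq:Linfty_grad_u_estimate}. Applying \(Z\) to the equation commutes well with \(\partial_t\) and \(Z_{y}\). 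For \(Z_3\), the commutator \([Z_3,\partial_{zz}]\) has to be handled; we plan to follow the argument of \cite{MasmoudiRousset2012} and \cite{WangXinYong2015} for this commutator.

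In the source term, the ordinary fluid terms in \(F_\eta^0\) are bounded by \(1+P(\Lambda_m^1)+P(Q)\) through Lemma~\ref{lem:Linfty_basic}. The pressure gradient \(\mathcal B\nabla p\) and the viscous term \(\varepsilon\mathcal B\nabla\diver u\) are handled with \eqref{eq:Linfty_grad_lnrho_p} and \eqref{eq:Linfty_grad_divu_H1infty}; the latter produces the factor \(1+\varepsilon^2\|\Delta p\|_{\mathcal H^2}^2\). The new electric sources \(\mathcal T(\nabla\rho\times\nabla\phi)\) and \(\mathcal B(\rho\nabla\phi)\) are bounded in \(\mathcal H^{1,\infty}\) by \(\|\nabla\rho\|_{\mathcal H^{1,\infty}}\|\nabla\phi\|_{\mathcal H^{1,\infty}}\) together with \eqref{eq:Linfty_grad_phi}. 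This is the only new ingredient compared with the Navier--Stokes case. The tangential viscous terms \(\varepsilon\partial_{y}^2\eta\) and the cross terms are controlled in \(L^\infty\) by the anisotropic embedding. Since the conormal order used is at most \(4\) after embedding, this contributes \(\varepsilon^2 t\int_0^t\|\nabla^2u\|_{\mathcal H^4}^2\,d\tau\).

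We expect the main obstacle to be the \(L^\infty\) estimate for \(Z_3\eta\) near the boundary. The transport term \(u\cdot\nabla\eta\) contains \(u^3\partial_z\eta\), which has to be rewritten as \((u^3/\varphi)Z_3\eta\) using the Hardy-type bound for \(u\cdot N/\varphi(z)\). Commuting \(Z_3\) with \(\varepsilon\partial_{zz}\) generates terms that are not directly controlled. For these we would first try the approach of \cite{MasmoudiRousset2012}: introduce a weighted variable that removes the commutator, and estimate it with the explicit half-space Green's function. This should be done uniformly in \(\varepsilon\), because the Green's function bounds are scale-invariant in \(\varepsilon\). Finally, we add back the initial data term \(\|(u_0,\nabla u_0)\|_{\mathcal H^{1,\infty}}^2\) and the interior contribution, and sum over the partition of unity, which gives \eqref{eq:Linfty_grad_u_estimate}.
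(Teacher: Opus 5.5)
Your outline matches the paper's. You reduce $\chi\nabla u$ in $\mathcal H^{1,\infty}$ to $\eta$, $\diver u$ and $\|u\|_{\mathcal H^{2,\infty}}$, prove a one-dimensional parabolic $L^\infty$ bound for $\eta$ with $\eta|_{\partial\Omega}=0$, and bound the electric sources by $\|\nabla\rho\|_{\mathcal H^{1,\infty}}\|\nabla\phi\|_{\mathcal H^{1,\infty}}$. But the step where you dispose of the viscous remainder fails in the coordinates you use. In the graph coordinates $x=(y,\psi(y)+z)$,
\[
\Delta=(1+|\nabla\psi|^2)\partial_{zz}-2\nabla\psi\cdot\nabla_y\partial_z+\Delta_y-(\Delta\psi)\partial_z ,
\]
so your $G$ contains the cross term $\varepsilon\nabla\psi\cdot\nabla_y\partial_z\eta$. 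You assert this is controlled in $\mathcal H^{1,\infty}$ by the anisotropic embedding, and it is not. You need $\varepsilon\|\partial_z Z^2\eta\|_{L^\infty}$, and Proposition~\ref{prop:anisotropic_trace} applied to $f=\partial_zZ^2\eta$ needs $\|\nabla f\|_{\mathcal H^{m_2}}$. That norm contains $\partial_{zz}Z^2\eta$, i.e.\ third normal derivatives of $u$, which nothing in $\Lambda_m^1$, $Q$ or $\varepsilon^2\int_0^t\|\nabla^2u\|_{\mathcal H^4}^2\,d\tau$ controls. The embedding argument that produces $\varepsilon^2t\int_0^t\|\nabla^2u\|_{\mathcal H^4}^2\,d\tau$ works only for purely tangential second derivatives, via $\varepsilon\|\eta\|_{\mathcal H^{3,\infty}}$. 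The missing idea is the one the paper takes from \cite{MasmoudiRousset2012}: work in normal geodesic coordinates $\Psi^n(y,z)=(y,\psi(y))-z\,n(y)$. There the metric is block diagonal and $\Delta=\partial_{zz}+\tfrac12\partial_z(\ln|g|)\partial_z+\Delta_{\tilde g}$, with $\Delta_{\tilde g}$ involving only $\partial_y$. Hence $\widetilde{\Delta\eta}-\partial_{zz}\bar\eta$ contains no mixed second derivatives. The remaining first-order term $\tfrac12\mu\varepsilon\,\partial_z(\ln|g|)\partial_z\bar\eta$ is harmless; for instance, it is removed by writing $\bar\eta=|g|^{-1/4}\zeta$.

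A second point concerns the $Z_3$ estimate. If you use the frozen Masmoudi--Rousset model with explicit Green's function, as the paper does, the operator keeps $\widetilde\rho\,za(t,y)\partial_z$ with $a=\partial_z(\widetilde u\cdot n)|_{z=0}$. The remainder $\widetilde\rho\,[\widetilde u\cdot n-za]\partial_z\bar\eta$ then goes into $G$, see \eqref{eq:G_definition}. Controlling it needs $\|(\widetilde u\cdot n-za)/z\|_{L^\infty}\leq C\|\partial_{zz}(\widetilde u\cdot n)\|_{L^\infty}$. When $\diver u\neq0$, $\partial_{zz}(u\cdot n)$ is no longer just a tangential derivative of $\partial_z u$. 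The paper obtains it from \eqref{eq:uzz_normal_identity} together with $\|\nabla\diver u\|_{L^\infty}^2\leq C_3P(Q)$, i.e.\ \eqref{eq:Linfty_grad_divu_Linfty}. This is the genuinely compressible input of the lemma, and the Hardy inequality you cite, an $L^2$ conormal bound, does not provide it. If instead you move $(u^3/\varphi)Z_3\eta$ into the source, you lose a derivative, because its $\mathcal H^{1,\infty}$ norm involves $ZZ_3\eta$.
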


\begin{proof}
The argument follows the \(L^\infty\)-estimate for the boundary vorticity
quantity in \cite{WangXinYong2015}, with additional electric terms treated by
the Poisson estimates.
Let \(\chi_0\) be a cut-off function supported away from the boundary, and let
\(\chi\) be a cut-off function supported in a boundary coordinate patch. For
the interior part, the standard Sobolev embedding gives, for \(|\alpha|\leq1\),
\begin{equation}\label{eq:grad_u_interior_Linfty}
\|\chi_0Z^\alpha\nabla u(t)\|_{L^\infty}^2
+
\|\chi_0\nabla u(t)\|_{L^\infty}^2
\leq
C\|u(t)\|_{\mathcal H^m}^2
\leq
C\Lambda_m^1(t).
\end{equation}
Thus it remains to estimate \(\chi Z^\alpha\nabla u\) and
\(\chi\nabla u\) near the boundary.

In a boundary coordinate patch, write
\[
x=\Psi^n(y,z)
=
\begin{pmatrix}
y\\
\psi(y)
\end{pmatrix}
-
z\,n(y),
\qquad y=(y_1,y_2),\quad z\geq0,
\]

where \(y=(y_1,y_2)\) is tangential and \(z\) is normal. For a function \(f\),
we denote its expression in these coordinates by
\[
\widetilde f(t,y,z):=f(t,\Psi^n(y,z)).
\]
Set
\[
(\bar\eta,\widetilde\rho,\widetilde u)(t,y,z)
:=
(\eta,\rho,u)(t,\Psi^n(y,z)).
\]

The local decomposition of the gradient gives
\begin{equation}\label{eq:grad_u_boundary_reduction}
\|\chi\nabla u(t)\|_{\mathcal H^{1,\infty}}^2
\leq
C_{m+1}
\left(
\|\bar\eta(t)\|_{\mathcal H^{1,\infty}}^2
+
\|\diver u(t)\|_{\mathcal H^{1,\infty}}^2
+
\|u(t)\|_{\mathcal H^{2,\infty}}^2
\right).
\end{equation}
Hence, by Lemma~\ref{lem:Linfty_basic},
\begin{equation}\label{eq:grad_u_boundary_by_eta}
\|\chi\nabla u(t)\|_{\mathcal H^{1,\infty}}^2
\leq
C_{m+2}
\left(
\|\bar\eta(t)\|_{\mathcal H^{1,\infty}}^2
+
P(\Lambda_m^1(t))
+
P\left(\|\Delta p(t)\|_{\mathcal H^1}^2\right)
\right).
\end{equation}
It remains to estimate \(\bar\eta\).

By \eqref{eq:eta_equation}, after transforming to the local coordinates, we
have
\begin{equation}\label{eq:bar_eta_original_equation}
\begin{aligned}
\widetilde\rho\,\partial_t\bar\eta
+
\widetilde\rho\,\widetilde u_y\cdot\nabla_y\bar\eta
+
\widetilde\rho\,(\widetilde u\cdot n)\partial_z\bar\eta
-
\mu\varepsilon\,\partial_{zz}\bar\eta
=
\widetilde F_\eta^0
+
\widetilde F_\eta^E
+
\mu\varepsilon
\left(
\widetilde{\Delta\eta}-\partial_{zz}\bar\eta
\right),
\end{aligned}
\end{equation}
with
\[
\bar\eta|_{z=0}=0.
\]
Since \(u\cdot n=0\) on \(\partial\Omega\), we have
\[
(\widetilde u\cdot n)(t,y,0)=0.
\]
Let
\[
a(t,y):=\partial_z(\widetilde u\cdot n)(t,y,0).
\]
Then
\[
\widetilde u\cdot n
=
za(t,y)
+
\left[
\widetilde u\cdot n-za(t,y)
\right].
\]
Therefore, \(\bar\eta\) satisfies the following normal parabolic equation:
\begin{equation}\label{eq:bar_eta_equation}
\begin{aligned}
\widetilde\rho\,\partial_t\bar\eta
+
\widetilde\rho\,\widetilde u_y\cdot\nabla_y\bar\eta
+
\widetilde\rho\,za(t,y)\partial_z\bar\eta
-
\mu\varepsilon\,\partial_{zz}\bar\eta
=
G,
\qquad
\bar\eta|_{z=0}=0,
\end{aligned}
\end{equation}
where
\begin{equation}\label{eq:G_definition}
\begin{aligned}
G
:={}&
\widetilde F_\eta^0
+
\widetilde F_\eta^E
+
\mu\varepsilon
\left(
\widetilde{\Delta\eta}-\partial_{zz}\bar\eta
\right)
-
\widetilde\rho
\left[
\widetilde u\cdot n-za(t,y)
\right]
\partial_z\bar\eta .
\end{aligned}
\end{equation}
Set \(\bar\eta_0=\bar\eta(0,y,z)\). Applying the \(L^\infty\)-estimate for the one-dimensional parabolic problem
\eqref{eq:bar_eta_equation}, and using Proposition~\ref{prop:conormal_product},
we get
\begin{equation}\label{eq:bar_eta_Linfty_framework}
\begin{aligned}
\|\bar\eta(t)\|_{\mathcal H^{1,\infty}}^2
\leq{}&
C\Bigg(
\|\bar\eta_0\|_{\mathcal H^{1,\infty}}^2
+
t\int_0^t
\|G(\tau)\|_{\mathcal H^{1,\infty}}^2\,d\tau  \\
&\quad
+
t\int_0^t
\big[1+P(\Lambda_m^1(\tau))+P(Q(\tau))\big]
\|\bar\eta(\tau)\|_{\mathcal H^{1,\infty}}^2\,d\tau
\Bigg).
\end{aligned}
\end{equation}

We now estimate \(G\). The terms generated by \(\widetilde F_\eta^0\), the
coordinate part of the Laplacian, and the coefficient commutators satisfy
\begin{equation}\label{eq:G_standard_estimate}
\begin{aligned}
&\|\widetilde F_\eta^0\|_{\mathcal H^{1,\infty}}^2
+
\varepsilon^2
\|\widetilde{\Delta\eta}-\partial_{zz}\bar\eta\|_{\mathcal H^{1,\infty}}^2  \\
&\leq
C_{m+2}
\big[
P(Q(t))
+
P(\Lambda_m^1(t))
+
\varepsilon^2\|\nabla^2u(t)\|_{\mathcal H^4}^2
\big].
\end{aligned}
\end{equation}

For the electric part, by \eqref{eq:eta_sources}, Proposition~\ref{prop:conormal_product},
and the elliptic estimate for the Poisson equation,
\begin{equation}\label{eq:G_electric_estimate}
\begin{aligned}
\|\widetilde F_\eta^E(t)\|_{\mathcal H^{1,\infty}}^2
&\leq
C_{m+2}
\left(
\|\nabla\rho\times\nabla\phi\|_{\mathcal H^{1,\infty}}^2
+
\|\rho\nabla\phi\|_{\mathcal H^{1,\infty}}^2
\right)  \\
&\leq
C_{m+2}[1+P(Q(t))]
P(\mathcal E_m(t)).
\end{aligned}
\end{equation}

It remains to control the last term in \eqref{eq:G_definition}. Since
\[
\widetilde u\cdot n-za(t,y)
=
\int_0^z
\int_0^s
\partial_{zz}(\widetilde u\cdot n)(t,y,r)\,dr\,ds,
\]
we have
\[
\left\|
\frac{\widetilde u\cdot n-za(t,y)}{z}
\right\|_{L^\infty}
\leq
C\|\partial_{zz}(\widetilde u\cdot n)\|_{L^\infty}.
\]
Moreover, since \(n\) is independent of \(z\) in the normal coordinates,
\begin{equation}\label{eq:uzz_normal_identity}
\partial_{zz}(\widetilde u\cdot n)
=
\partial_z\diver u
-
\partial_z(\Pi\partial_{y_1}u)_1
-
\partial_z(\Pi\partial_{y_2}u)_2 .
\end{equation}
Thus, by Lemma~\ref{lem:Linfty_basic} and the local decomposition of
\(\nabla u\),
\begin{equation}\label{eq:uzz_normal_estimate}
\|\partial_{zz}(\widetilde u\cdot n)(t)\|_{L^\infty}^2
\leq
C_3P(Q(t)).
\end{equation}
Consequently,
\begin{equation}\label{eq:G_normal_transport_estimate}
\begin{aligned}
&\left\|
\widetilde\rho
\left[
\widetilde u\cdot n-za(t,y)
\right]
\partial_z\bar\eta
\right\|_{\mathcal H^{1,\infty}}^2
\leq
C_4[1+P(Q(t))]
\left(
\|\bar\eta(t)\|_{\mathcal H^{1,\infty}}^2
+
P(\Lambda_m^1(t))
\right).
\end{aligned}
\end{equation}
Combining \eqref{eq:G_standard_estimate}--\eqref{eq:G_normal_transport_estimate},
we obtain
\begin{equation}\label{eq:G_total_estimate}
\begin{aligned}
\|G(t)\|_{\mathcal H^{1,\infty}}^2
\leq
C_4[1+P(Q(t))]
\Big[
&\|\bar\eta(t)\|_{\mathcal H^{1,\infty}}^2
+
P(\Lambda_m^1(t))
+
P(\mathcal E_m(t))  \\
&+
\varepsilon^2\|\nabla^2u(t)\|_{\mathcal H^4}^2
+
\varepsilon^2\|\Delta p(t)\|_{\mathcal H^2}^2
\Big].
\end{aligned}
\end{equation}
Substituting \eqref{eq:G_total_estimate} into
\eqref{eq:bar_eta_Linfty_framework} and applying Gronwall's inequality yields
\begin{equation}\label{eq:bar_eta_final_Linfty}
\begin{aligned}
&\|\bar\eta(t)\|_{\mathcal H^{1,\infty}}^2
\leq
C_{m+2}\Bigg(
\|\bar\eta_0\|_{\mathcal H^{1,\infty}}^2
+
\varepsilon^2 t
\int_0^t
\|\nabla^2u(\tau)\|_{\mathcal H^4}^2\,d\tau  \\
&+
t\int_0^t
\big[1+P(\Lambda_m^1(\tau))+P(Q(\tau))\big]
\big[1+\varepsilon^2\|\Delta p(\tau)\|_{\mathcal H^2}^2\big]\,d\tau
\Bigg).
\end{aligned}
\end{equation}

Finally, from \eqref{eq:eta_rewrite}, \eqref{eq:normal_by_eta},
\eqref{eq:grad_u_boundary_by_eta}, \eqref{eq:grad_u_interior_Linfty}, and
\eqref{eq:bar_eta_final_Linfty}, we obtain
\eqref{eq:Linfty_grad_u_estimate}. This completes the proof.
\end{proof}

\subsection{\texorpdfstring{Uniform estimate for \(\Delta p\)}
{Uniform estimate for Delta p}}
\begin{lemma}\label{lem:uniform_Delta_p}
Let \(m\geq6\) and \(0\leq t\leq\min\{T,1\}\). Then
\begin{equation}\label{eq:uniform_Delta_p_estimate}
\begin{aligned}
&\sup_{0\leq\tau\leq t}
\left(
\|\Delta p(\tau)\|_{\mathcal H^1}^2
+\varepsilon\|\Delta p(\tau)\|_{\mathcal H^2}^2
\right)
+\int_0^t\|\Delta p(\tau)\|_{\mathcal H^2}^2\,d\tau\\
&\leq C_{m+2}\Bigg[
P\left(\|(p_0,u_0,\nabla\phi_0)\|_{X_m^\varepsilon}^2\right)
+[1+P(Q(t))]\int_0^t
P\left(\|(p,u,\nabla\phi)(\tau)\|_{X_m^\varepsilon}^2\right)\,d\tau
\Bigg].
\end{aligned}
\end{equation}
\end{lemma}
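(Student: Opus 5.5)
The plan is to derive a damped parabolic-type equation for $\Delta p$ and then run conormal energy estimates on it, as in the corresponding step of Wang--Xin--Yong \cite{WangXinYong2015}. The electric force enters only through a lower-order source.

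\textbf{Step 1: the equation for $\Delta p$.} Divide the momentum equation \eqref{eq:momentum_curl_form} by $\rho$ and take its divergence. Using $\diver(\nabla\times\omega)=0$, the leading pressure term is $\diver(\rho^{-1}\nabla p)$. From \eqref{eq:pressure_div_relation}, $\diver u=-(p_t+u\cdot\nabla p)/(\gamma p)$, so
\[
(2\mu+\lambda)\varepsilon\,\diver\Big(\frac{1}{\rho}\nabla\diver u\Big)
=-(2\mu+\lambda)\varepsilon\,\frac{1}{\gamma\rho p}\big(\partial_t+u\cdot\nabla\big)\Delta p+\text{l.o.t.}
\]
This gives a transport--damping equation of the form
\[
\frac{(2\mu+\lambda)\varepsilon}{\gamma\rho p}\big(\partial_t+u\cdot\nabla\big)\Delta p+\frac{1}{\rho}\Delta p=\mathcal F ,
\]
where $\mathcal F$ collects the following:
\begin{itemize}
\item $-\diver(u_t+u\cdot\nabla u)$, rewritten as $\partial_t\diver u+\dots$ and expressed via \eqref{eq:pressure_div_relation};
\item $\mu\varepsilon\,\nabla(\rho^{-1})\cdot(\nabla\times\omega)$;
\item the electric term $\diver\nabla\phi=\rho-1$, which is harmless;
\item commutators in which $\varepsilon$ multiplies at most $\nabla^2 p$, or $\nabla u\,\nabla^2u$.
\end{itemize}
The damping $\rho^{-1}\Delta p$ is the key structure. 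It produces $\int_0^t\|\Delta p\|_{\mathcal H^2}^2$ without any $\varepsilon$ weight, while the $\varepsilon\partial_t$ part produces $\varepsilon\sup\|\Delta p\|_{\mathcal H^2}^2$.

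\textbf{Step 2: energy estimates.} Apply $Z^\alpha$ with $|\alpha|\le2$, multiply by $Z^\alpha\Delta p$, and integrate over $\Omega\times(0,t)$.
\begin{itemize}
\item No boundary terms arise, since no spatial integration by parts in the normal direction is needed: the transport term $\varepsilon u\cdot\nabla Z^\alpha\Delta p\,Z^\alpha\Delta p$ is handled by $u\cdot n=0$.
\item Commutators $[Z^\alpha,\cdot]$ are treated by Proposition~\ref{prop:conormal_product}.
\item Young's inequality moves $\|\mathcal F\|_{\mathcal H^2}^2$ to the right-hand side.
\end{itemize}
For the $\mathcal H^1$ bound without $\varepsilon$, I would not use the $\varepsilon$-level energy. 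Instead I would use the pointwise identity $\Delta p=\rho\mathcal F-\varepsilon(\dots)$ at level $\mathcal H^1$, so that
\[
\sup_\tau\|\Delta p\|_{\mathcal H^1}^2\lesssim \sup_\tau\|\mathcal F\|_{\mathcal H^1}^2+\varepsilon^2\sup_\tau\|(\partial_t+u\cdot\nabla)\Delta p\|_{\mathcal H^1}^2 .
\]
The last term is bounded by $\varepsilon\|\Delta p\|_{\mathcal H^2}^2$ times $\varepsilon$, up to terms with one time derivative. Alternatively, one can run the same energy estimate at level $\mathcal H^1$ after testing against $Z^\alpha\Delta p$ with $|\alpha|\le1$ and using $\Delta p\in\mathcal H^1$ from $\mathcal F$.

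\textbf{Step 3: bounding $\mathcal F$, the main obstacle.} We need
\[
\int_0^t\|\mathcal F\|_{\mathcal H^2}^2\,d\tau\le[1+P(Q)]\int_0^t P\big(\|\cdot\|_{X_m^\varepsilon}^2\big)\,d\tau .
\]
The dangerous contributions are $\partial_t\diver u$ and $\diver(u\cdot\nabla u)$ in $\mathcal H^2$, which contain $\nabla\partial_t^k p$ and second normal derivatives of $u$.
\begin{itemize}
\item Using \eqref{eq:pressure_div_relation}, $\partial_t\diver u$ becomes $-(\gamma p)^{-1}(\partial_t^2p+u\cdot\nabla p_t)+\dots$. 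These terms lie in $\mathcal H^2$ because $m\ge6$, and $\|\nabla\partial_t^kp\|_{m-1-k}$ is part of $X_m^\varepsilon$.
\item The term $\nabla u:\nabla u^T$ is bounded by $\|\nabla u\|_{\mathcal H^{1,\infty}}\|\nabla u\|_{\mathcal H^2}$.
\item The viscous term $\varepsilon\nabla\rho^{-1}\cdot\nabla\times\omega$ needs $\varepsilon\|\nabla^2u\|_{\mathcal H^2}$. This is controlled by \eqref{eq:L2_closing_estimate} through $\varepsilon\int\|\nabla^2\partial_t^ku\|^2_{m-1-k}$, where the factor $\varepsilon^2\le\varepsilon$ helps.
\item The electric term is bounded by $\|\rho-1\|_{\mathcal H^2}$ together with Proposition~\ref{prop:poisson_neumann_estimate}.
\end{itemize}
The highest-order terms $\varepsilon\nabla^2 p\cdot\nabla^2 p$-type products, coming from differentiating $(\gamma p)^{-1}$, are absorbed by splitting off $\delta\int\|\Delta p\|_{\mathcal H^2}^2$, using the elliptic recovery $\|\nabla^2p\|_{\mathcal H^1}\lesssim\|\Delta p\|_{\mathcal H^1}+\|p\|_{\mathcal H^3}+\|\nabla p\|_{\mathcal H^1}$ as in Lemma~\ref{lem:Linfty_basic}.

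The restriction $t\le1$ lets us bound the initial-data terms and the factors of $t$ polynomially. Gathering everything yields \eqref{eq:uniform_Delta_p_estimate}.
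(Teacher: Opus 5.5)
Your Steps 1--3 follow the paper's argument up to the choice of unknown. The paper uses $q=\ln\rho$ and the undivided momentum equation, so that $\varepsilon(\partial_t+u\cdot\nabla)\Delta q$ has a constant coefficient and coercivity comes from $\int Z^\alpha\Delta p\,Z^\alpha\Delta q$. Your weight $\frac{(2\mu+\lambda)\varepsilon}{\gamma\rho p}$ with damping $\rho^{-1}\Delta p$ works as well, and it reduces the electric source to $\rho-1$. This gives $\varepsilon\sup_\tau\|\Delta p\|_{\mathcal H^2}^2+\int_0^t\|\Delta p\|_{\mathcal H^2}^2\,d\tau$.

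The gap is the \emph{unweighted} term $\sup_\tau\|\Delta p(\tau)\|_{\mathcal H^1}^2$; neither of your two routes produces it.
\begin{itemize}
\item Your alternative (rerunning the energy estimate with $|\alpha|\le1$) fails because the time derivative in the equation carries the factor $\varepsilon$. Testing against $Z^\alpha\Delta p$ therefore again controls only $\varepsilon\sup_\tau\|\Delta p\|_{\mathcal H^1}^2$.
\item Your main route, the pointwise identity $\Delta p=\rho\mathcal F-\varepsilon(\dots)$, needs a bound on $\sup_\tau\|\mathcal F(\tau)\|_{\mathcal H^1}^2$. Step~3 never gives one; it only bounds $\int_0^t\|\mathcal F\|_{\mathcal H^2}^2$.
\end{itemize}
At a fixed time, $\mathcal F$ and the term $\varepsilon(\partial_t+u\cdot\nabla)\Delta p$ carry $L^\infty$ coefficients. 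Examples are $\|\nabla u\|_{L^\infty}$ from $\nabla u:\nabla u^T$ and $\|(u\cdot N)/\varphi\|_{\mathcal H^{1,\infty}}$ from $\varepsilon u\cdot\nabla\Delta p$. At this stage these are bounded only by $Q(t)$. The $L^2$-type factors reduce to data only through \eqref{eq:L2_closing_estimate} and your Step~2. So you end up with a term $[1+P(Q(t))]\,P(\|(p_0,u_0,\nabla\phi_0)\|_{X_m^\varepsilon}^2)$, which is not the right-hand side of \eqref{eq:uniform_Delta_p_estimate}. This matters: $Q$ is itself controlled through $\sup_\tau\|\Delta p\|_{\mathcal H^1}^2$ by \eqref{eq:Q_reduction}, and the continuity argument needs the data term to be free of $Q$.

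The missing idea is elementary. Since $\partial_t=Z_0$ is one of the conormal fields, $\|\partial_t\Delta p\|_{\mathcal H^1}\le\|\Delta p\|_{\mathcal H^2}$. The fundamental theorem of calculus in time then gives
\[
\sup_{0\le\tau\le t}\|\Delta p(\tau)\|_{\mathcal H^1}^2
\le\|\Delta p(0)\|_{\mathcal H^1}^2
+\delta\int_0^t\|\Delta p\|_{\mathcal H^2}^2\,d\tau
+C_\delta\int_0^t\|\Delta p\|_{\mathcal H^1}^2\,d\tau .
\]
The $\delta$-term is absorbed by the unweighted damping integral from Step~2. This is exactly the paper's \eqref{eq:Delta_q_H1_est}, written for $\Delta q$ and transferred to $\Delta p$ via \eqref{eq:Delta_p_Delta_q_equivalence}.

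A smaller omission is in Step~2. The commutator $[Z^\alpha,u\cdot\nabla]\Delta p$ contains terms $Z^\beta(u\cdot N)\,\partial_zZ^\gamma\Delta p$, and $\partial_z\Delta p$ is not controlled by $\|\Delta p\|_{\mathcal H^2}$. Proposition~\ref{prop:conormal_product} alone does not handle them. You must rewrite them as $\frac{Z^\beta(u\cdot N)}{\varphi(z)}Z_3Z^\gamma\Delta p$ and use that $Z^\beta(u\cdot N)$ vanishes on $\partial\Omega$, via the Hardy-type bound.
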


\begin{proof}
Let
\[
q=\ln\rho,
\qquad
\dot u=u_t+u\cdot\nabla u.
\]
Taking divergence of \eqref{eq:momentum_curl_form} gives
\begin{equation}\label{eq:Delta_p_div_momentum}
-\varepsilon\Delta\diver u
+\frac{1}{2\mu+\lambda}\Delta p
=-\frac{1}{2\mu+\lambda}\diver(\rho\dot u)
+\frac{1}{2\mu+\lambda}\diver(\rho\nabla\phi).
\end{equation}
On the other hand, the continuity equation gives
\[
q_t+u\cdot\nabla q+\diver u=0.
\]
Hence
\[
-\Delta\diver u
=(\Delta q)_t+u\cdot\nabla\Delta q
+\Delta u\cdot\nabla q
+2\sum_{i=1}^3\partial_i u\cdot\nabla\partial_i q.
\]
Substituting this identity into \eqref{eq:Delta_p_div_momentum}, we obtain
\begin{equation}\label{eq:Delta_q_equation}
\begin{aligned}
&\varepsilon(\Delta q)_t
+\varepsilon u\cdot\nabla\Delta q
+\frac{1}{2\mu+\lambda}\Delta p\\
&=-\varepsilon\Delta u\cdot\nabla q
-2\varepsilon\sum_{i=1}^3\partial_i u\cdot\nabla\partial_i q
-\frac{1}{2\mu+\lambda}\diver(\rho\dot u)
+\frac{1}{2\mu+\lambda}\diver(\rho\nabla\phi).
\end{aligned}
\end{equation}
We follow the argument of \cite[Lemma~3.16]{WangXinYong2015}, with the
additional electric term estimated by the preceding Poisson estimates.
Applying \(Z^\alpha\), \(|\alpha|\leq2\), to
\eqref{eq:Delta_q_equation}, multiplying by \(Z^\alpha\Delta q\), and
integrating over \(\Omega\), we get
\begin{equation}\label{eq:Delta_q_energy_identity}
\begin{aligned}
&\frac\varepsilon2\frac{d}{dt}\|Z^\alpha\Delta q\|_{L^2}^2
+\frac{1}{2\mu+\lambda}
\int_\Omega Z^\alpha\Delta p\,Z^\alpha\Delta q\,dx\\
&=-\varepsilon\int_\Omega
Z^\alpha(u\cdot\nabla\Delta q)\,Z^\alpha\Delta q\,dx
-\varepsilon\int_\Omega
Z^\alpha(\Delta u\cdot\nabla q)\,Z^\alpha\Delta q\,dx\\
&\quad-2\varepsilon\sum_{i=1}^3\int_\Omega
Z^\alpha(\partial_i u\cdot\nabla\partial_i q)\,Z^\alpha\Delta q\,dx
-\frac{1}{2\mu+\lambda}\int_\Omega
Z^\alpha\diver(\rho\dot u)\,Z^\alpha\Delta q\,dx\\
&\quad+\frac{1}{2\mu+\lambda}\int_\Omega
Z^\alpha\diver(\rho\nabla\phi)\,Z^\alpha\Delta q\,dx.
\end{aligned}
\end{equation}

We first estimate the electric term. As in \eqref{eq:electric_low_final},
the Poisson equation yields
\[
\diver(\rho\nabla\phi)
=\nabla\rho\cdot\nabla\phi+\rho(\rho-1).
\]
By Proposition~\ref{prop:conormal_product} and
Proposition~\ref{prop:poisson_neumann_estimate},
\[
\int_0^t\|\diver(\rho\nabla\phi)\|_{\mathcal H^2}^2\,d\tau
\leq C_{m+2}[1+P(Q(t))]\int_0^tP(\mathcal E_m(\tau))\,d\tau.
\]
Therefore, Young's inequality gives
\begin{equation}\label{eq:Delta_p_electric_estimate}
\begin{aligned}
&\sum_{|\alpha|\leq2}\int_0^t
\left|\int_\Omega
Z^\alpha\diver(\rho\nabla\phi)\,Z^\alpha\Delta q\,dx\right|d\tau\\
&\leq\delta\int_0^t\|\Delta q\|_{\mathcal H^2}^2\,d\tau
+C_\delta C_{m+2}[1+P(Q(t))]
\int_0^tP(\mathcal E_m(\tau))\,d\tau.
\end{aligned}
\end{equation}

Next, since \(p=e^{\gamma q}\),
\[
\Delta p=\gamma p\,\Delta q+\gamma^2p|\nabla q|^2,
\qquad
Z^\alpha\Delta p=\gamma p\,Z^\alpha\Delta q
+\gamma[Z^\alpha,p]\Delta q
+\gamma^2Z^\alpha(p|\nabla q|^2).
\]
Using the positive lower bound of \(p\) and
Proposition~\ref{prop:conormal_product}, we obtain
\begin{equation}\label{eq:Delta_p_coercive}
\begin{aligned}
&\frac{1}{2\mu+\lambda}\sum_{|\alpha|\leq2}
\int_0^t\int_\Omega Z^\alpha\Delta p\,Z^\alpha\Delta q\,dx\,d\tau\\
&\geq c\int_0^t\|\Delta q\|_{\mathcal H^2}^2\,d\tau
-C_{m+2}[1+P(Q(t))]\int_0^t
\left(P(\mathcal E_m(\tau))+\|\Delta q\|_{\mathcal H^1}^2\right)\,d\tau.
\end{aligned}
\end{equation}

The remaining terms are estimated as in
\cite[(3.187)--(3.192)]{WangXinYong2015}.
We give the estimates here for completeness. By the momentum equation,
\eqref{eq:Linfty_grad_divu_Linfty}, and
Lemma~\ref{lem:normal_divu_estimate} at the lower orders, we have
\[
\begin{aligned}
\varepsilon^2\|\Delta u\|_{L^\infty}^2
&\leq C[1+P(Q(t))],\\
\varepsilon^2\int_0^t\|\Delta u\|_{\mathcal H^2}^2\,d\tau
&\leq C_{m+2}[1+P(Q(t))]\int_0^tP(\mathcal E_m(\tau))\,d\tau.
\end{aligned}
\]
The electric force \(\rho\nabla\phi\) in these estimates is controlled by
the same product and Poisson estimates used in
\eqref{eq:gradp_time_electric_estimate}.
Together with Proposition~\ref{prop:anisotropic_trace}, the local
decomposition of the Laplacian, and Young's inequality, this gives
\begin{equation}\label{eq:Delta_q_product_terms_estimate}
\begin{aligned}
&\varepsilon\sum_{|\alpha|\leq2}\int_0^t
\left|\int_\Omega
Z^\alpha(\Delta u\cdot\nabla q)\,Z^\alpha\Delta q\,dx\right|d\tau\\
&\quad+2\varepsilon\sum_{|\alpha|\leq2}\sum_{i=1}^3\int_0^t
\left|\int_\Omega
Z^\alpha(\partial_i u\cdot\nabla\partial_i q)\,Z^\alpha\Delta q\,dx\right|d\tau\\
&\leq\delta\int_0^t\|\Delta q\|_{\mathcal H^2}^2\,d\tau
+C_\delta\varepsilon^2\int_0^t\|\nabla^2u\|_{\mathcal H^3}^2\,d\tau\\
&\quad+C_\delta C_{m+2}[1+P(Q(t))]\int_0^t
\left(P(\mathcal E_m(\tau))
+\varepsilon^2\|\Delta q\|_{\mathcal H^2}^2
+P(\|\Delta q\|_{\mathcal H^1}^2)\right)\,d\tau.
\end{aligned}
\end{equation}
For the transport term, integration by parts, \(u\cdot n=0\), and the
Hardy inequality for \((u\cdot N)/\varphi(z)\) yield
\begin{equation}\label{eq:Delta_q_transport_estimate}
\begin{aligned}
&\varepsilon\sum_{|\alpha|\leq2}\int_0^t
\left|\int_\Omega
Z^\alpha(u\cdot\nabla\Delta q)\,Z^\alpha\Delta q\,dx\right|d\tau\\
&\leq\delta\int_0^t\|\Delta q\|_{\mathcal H^2}^2\,d\tau
+C_\delta\varepsilon^2\int_0^t\|\nabla^2u\|_{\mathcal H^3}^2\,d\tau\\
&\quad+C_\delta C_{m+2}[1+P(Q(t))]\int_0^t
\left(P(\mathcal E_m(\tau))
+\varepsilon^2\|\Delta q\|_{\mathcal H^2}^2\right)\,d\tau.
\end{aligned}
\end{equation}

For the material derivative term, we use
\[
\diver(\rho\dot u)
=\rho(\diver u)_t+\rho u\cdot\nabla\diver u
+\nabla\rho\cdot\dot u
+\rho\sum_{i,j=1}^3\partial_i u_j\partial_j u_i.
\]
Using \eqref{eq:pressure_div_relation}, the Hardy inequality, and
Proposition~\ref{prop:conormal_product}, we obtain
\[
\int_0^t\|\diver(\rho\dot u)\|_{\mathcal H^2}^2\,d\tau
\leq C_{m+2}[1+P(Q(t))]\int_0^tP(\mathcal E_m(\tau))\,d\tau.
\]
Consequently,
\begin{equation}\label{eq:Delta_q_material_estimate}
\begin{aligned}
&\sum_{|\alpha|\leq2}\int_0^t
\left|\int_\Omega
Z^\alpha\diver(\rho\dot u)\,Z^\alpha\Delta q\,dx\right|d\tau\\
&\leq\delta\int_0^t\|\Delta q\|_{\mathcal H^2}^2\,d\tau
+C_\delta C_{m+2}[1+P(Q(t))]
\int_0^tP(\mathcal E_m(\tau))\,d\tau.
\end{aligned}
\end{equation}

Combining \eqref{eq:Delta_q_energy_identity}--
\eqref{eq:Delta_q_material_estimate} and choosing \(\delta>0\)
sufficiently small, we obtain
\begin{equation}\label{eq:Delta_q_H2_est}
\begin{aligned}
&\varepsilon\sup_{0\leq\tau\leq t}\|\Delta q(\tau)\|_{\mathcal H^2}^2
+c\int_0^t\|\Delta q\|_{\mathcal H^2}^2\,d\tau\\
&\leq C\varepsilon\|\Delta q_0\|_{\mathcal H^2}^2
+C\varepsilon^2\int_0^t\|\nabla^2u\|_{\mathcal H^3}^2\,d\tau\\
&\quad+C_{m+2}[1+P(Q(t))]\int_0^t
\left(\varepsilon\|\Delta q\|_{\mathcal H^2}^2
+P(\|\Delta q\|_{\mathcal H^1}^2)+P(\mathcal E_m(\tau))\right)\,d\tau.
\end{aligned}
\end{equation}
On the other hand, since \(\partial_t\) commutes with the conormal vector
fields, the fundamental theorem of calculus gives
\begin{equation}\label{eq:Delta_q_H1_est}
\begin{aligned}
\sup_{0\leq\tau\leq t}\|\Delta q(\tau)\|_{\mathcal H^1}^2
&\leq\|\Delta q_0\|_{\mathcal H^1}^2
+2\int_0^t\|\Delta q\|_{\mathcal H^1}
\|\partial_t\Delta q\|_{\mathcal H^1}\,d\tau\\
&\leq\|\Delta q_0\|_{\mathcal H^1}^2
+\delta\int_0^t\|\Delta q\|_{\mathcal H^2}^2\,d\tau
+C_\delta\int_0^t\|\Delta q\|_{\mathcal H^1}^2\,d\tau.
\end{aligned}
\end{equation}
Taking \(\delta<c/2\), adding \eqref{eq:Delta_q_H1_est} to
\eqref{eq:Delta_q_H2_est}, and using \eqref{eq:L2_closing_estimate}, we get
\begin{equation}\label{eq:Delta_q_combined_estimate}
\begin{aligned}
&\sup_{0\leq\tau\leq t}
\left(\|\Delta q(\tau)\|_{\mathcal H^1}^2
+\varepsilon\|\Delta q(\tau)\|_{\mathcal H^2}^2\right)
+\int_0^t\|\Delta q\|_{\mathcal H^2}^2\,d\tau\\
&\leq C\left(\|\Delta q_0\|_{\mathcal H^1}^2
+\varepsilon\|\Delta q_0\|_{\mathcal H^2}^2+\mathcal E_m(0)\right)\\
&\quad+C_{m+2}[1+P(Q(t))]\int_0^t
\left(\varepsilon\|\Delta q\|_{\mathcal H^2}^2
+P(\|\Delta q\|_{\mathcal H^1}^2)+P(\mathcal E_m(\tau))\right)\,d\tau.
\end{aligned}
\end{equation}

It remains to return from \(q\) to \(p\). By \(p=e^{\gamma q}\),
Proposition~\ref{prop:conormal_product}, and the anisotropic Sobolev
inequality, we have, as in \cite[(3.148)]{WangXinYong2015},
\begin{equation}\label{eq:Delta_p_Delta_q_equivalence}
\begin{aligned}
\|\Delta q\|_{\mathcal H^1}^2
&\leq C\left[P\left(\|\Delta p\|_{\mathcal H^1}^2\right)
+P(\mathcal E_m)\right],\\
\|\Delta p\|_{\mathcal H^1}^2
&\leq C\left[P\left(\|\Delta q\|_{\mathcal H^1}^2\right)
+P(\mathcal E_m)\right],\\
\|\Delta q\|_{\mathcal H^2}^2
&\leq C\left[\|\Delta p\|_{\mathcal H^2}^2
+P\left(\|\Delta q\|_{\mathcal H^1}^2\right)+P(\mathcal E_m)\right],\\
\|\Delta p\|_{\mathcal H^2}^2
&\leq C\left[\|\Delta q\|_{\mathcal H^2}^2
+P\left(\|\Delta q\|_{\mathcal H^1}^2\right)+P(\mathcal E_m)\right].
\end{aligned}
\end{equation}
Thus the initial terms in \eqref{eq:Delta_q_combined_estimate} are bounded
by \(CP(\|(p_0,u_0,\nabla\phi_0)\|_{X_m^\varepsilon}^2)\), and its
time integral is bounded by
\[
C_{m+2}[1+P(Q(t))]\int_0^t
P\left(\|(p,u,\nabla\phi)(\tau)\|_{X_m^\varepsilon}^2\right)\,d\tau.
\]
Finally, applying \eqref{eq:Delta_p_Delta_q_equivalence} to the endpoint
norms and the time integral, using \eqref{eq:L2_closing_estimate}, and
enlarging \(P\) by H\"older's inequality in time, we obtain
\eqref{eq:uniform_Delta_p_estimate}. This completes the proof.
\end{proof}

\section{Proofs of Theorem~\ref{thm:main_uniform_regularity} and
Theorem~\ref{thm:main_inviscid_limit}}\label{sec:proofs}

With the uniform a priori estimates established in Section~\ref{sec:apriori},
the proofs of the local existence of solutions on a time interval
\([0,T_0]\) independent of \(\varepsilon\), the uniform regularity stated in
Theorem~\ref{thm:main_uniform_regularity}, and the inviscid convergence stated
in Theorem~\ref{thm:main_inviscid_limit} become standard, following the
compactness argument for the Navier--Stokes equations with Navier boundary
condition in \cite{MasmoudiRousset2012}. Indeed, one may first mollify the
initial data so that the classical local well-posedness theory for the viscous
problem applies. The uniform estimates obtained in Section~\ref{sec:apriori},
together with a continuous induction argument, yield a lifespan independent of
\(\varepsilon\) and the bound in
Theorem~\ref{thm:main_uniform_regularity}. Passing to the limit
\(\varepsilon\to0\) by compactness, and using the standard elliptic estimates
for the Poisson equation, gives a solution to the Euler--Poisson
system \eqref{eq:intro_EP} with boundary condition
\eqref{eq:intro_EP_boundary}. The Lipschitz bound contained in the uniform
estimates gives uniqueness of both the viscous solution and the limiting
Euler--Poisson solution, hence the whole family converges to the unique limit.
This proves Theorems~\ref{thm:main_uniform_regularity} and
\ref{thm:main_inviscid_limit}. We omit the details.

\vspace{2mm}

\section*{Acknowledgements}
This research is supported by the National Natural Science Foundation of China (No. 12471198, No.
12431018).

\section*{Data availability}  Data sharing is not applicable to this article as no data sets were generated or analysed during the current study.

\end{document}